\newtheorem{theorem}{Theorem}
\newtheorem{lemma}[theorem]{Lemma}
\newtheorem{definition}{Definition}
\newtheorem{notation}[definition]{Notation}
\newtheorem{algorithm}{Algorithm}
\newtheorem{example}{Example}
\newtheorem{remark}{Remark}
\newenvironment{proof}[1][Proof]{\noindent\textbf{#1.} }{\ \rule{0.5em}{0.5em}}
\begin{document}

\title{Resultants over Commutative Idempotent Semirings}
\author{Hoon Hong, Yonggu Kim, Georgy Scholten, J. Rafael Sendra 
\thanks{H. Hong was partially supported by US NSF 1319632.
A part of this work was also developed while H. Hong was visiting J.R. Sendra  at
the {\it Universidad de Alcal\'a}, under the frame of the project {\it Giner de los Rios}.
Y. Kim was financially supported by Chonnam National University in the
program, 2012.
J.R. Sendra belongs to  the Research Group ASYNACS (Ref. CCEE2011/R34) and is partially supported by the Spanish "Ministerio de Econom\'{\i}a y Competitividad"  under the Project  MTM2014-54141.}}
\date{}
\maketitle

\begin{abstract}
The resultant plays a crucial role in (computational) algebra and algebraic
geometry. One of the most important and well known properties of the resultant
is that it is equal to the determinant of the Sylvester matrix. In 2008,
Odagiri proved that a similar property holds over the tropical semiring if one
replaces subtraction with addition. The tropical semiring belongs to a large
family of algebraic structures called commutative idempotent semiring. In this
paper, we prove that the same property (with subtraction replaced with
addition) holds over an \emph{arbitrary\/} commutative idempotent semiring.

\end{abstract}

\renewcommand\footnotemark{}

%=============================================================================================
%abstract
%=============================================================================================

%=============================================================================================
%intro
%=============================================================================================

\section{Introduction}

The main contribution of this paper is adapting a certain important property
of resultant (over commutative rings) to commutative idempotent semirings. The
work was inspired by Odagiri's work \cite{Od08} where the property of
resultant is adapted to a particular commutative idempotent semiring, namely
tropical semiring. Below we elaborate on the above statements.

The resultant plays a crucial role in (computational) algebra and algebraic
geometry ~\cite{syl53, Sal85, Lo83, CLO97}. Let
\begin{align*}
f  &  =\left(  x-\alpha_{1}\right)  \cdots\left(  x-\alpha_{m}\right) \\
g  &  =\left(  x-\beta_{1}\right)  \cdots\left(  x-\beta_{n}\right)
\end{align*}
be two polynomials over a commutative ring. The resultant $\mathbf{R}$ of $f$
and $g$ is defined as
\[
\mathbf{R}=\prod_{\substack{1\leq i\leq m\\1\leq j\leq n}}\left(  \alpha
_{i}-\beta_{j}\right)
\]
and the Sylvester expression of $f$ and $g$ is defined as
\[
\mathbf{S}=\det M
\]
where $M$ is a certain matrix whose entries are from the coefficients of the
two polynomials. One of the most important and well known properties of the
resultant~\cite{syl53,DHKS09} is that
\[
\mathbf{R}=\mathbf{S.}
\]

The tropical semiring is a variant of a commutative ring where it is equipped
with a total order and that the addition operation is defined as maximum. As
the result, it does not allow substraction (due to lack of additive inverse;
hence the name semiring). It has been intensively investigated due to numerous
interesting applications ~\cite{Si88, Pi98, PaSt04, HeOlWo06, BoJenSpSt07,
GoMi08, SpSt09, ItMiSh09, But10, StTr13,LitSer14, BrSh14, MacStu15}.
%~\cite{CG79,  PaSt04, DevSt04, DevSanSt05, RiStTh05,  HeOlWo06, GoMi08, BoJenSpSt07, SpSt09, ItMiSh09, But10, GKS12, StTr13,LitSer14}.

There have been several adaptations of the properties of the resultant (over
commutative rings) to the tropical semiring~\cite{MiGr06, DFS07, BJSST, Ta08,
Od08, JY13}. In particular, Odagiri~\cite{Od08} proved that the property of
the resultant still holds if one simply replaces subtraction with addition,
that is, if we let%
\begin{align*}
f  &  =\left(  x+\alpha_{1}\right)  \cdots\left(  x+\alpha_{m}\right) \\
g  &  =\left(  x+\beta_{1}\right)  \cdots\left(  x+\beta_{n}\right)
\end{align*}
and redefine the resultant as
\[
\mathbf{R}=\prod_{\substack{1\leq i\leq m\\1\leq j\leq n}}\left(  \alpha
_{i}+\beta_{j}\right)  ,
\]
and redefine the Sylvester expression as
\[
\mathbf{S}=\operatorname*{per}M
\]
then
\[
\mathbf{R}=\mathbf{S.}
\]

The tropical semiring belongs to a large family of algebraic structures called
commutative idempotent semiring (CIS)~\cite{Oha70, KM97, GuB98, GoJ99, Gla02}.
As the name indicates, a commutative idempotent semiring is similar to a
commutative ring, except that we do not require substraction (additive
inverse) but we instead require additive idempotency. There are many
interesting algebraic structures that are CIS (see Section~\ref{cisr}).

Hence one naturally wonders whether we can extend Odagiri's result on tropical
semiring to the whole family of CIS. The main contribution of this paper is to
answer affirmatively, proving that Odagiri's result indeed holds for arbitrary
CIS, not just for the tropical semiring.

For proving the property, we, at the beginning, naturally attempted to
generalize/relax the proof technique of Odagiri. However we found it
practically impossible mainly because Odagiri's proof crucially exploits the
fact that the tropical semiring has a total order. Since CIS, in general, does
not require a total order, we had to develop a different proof technique. The
new technique consists of the following four parts:

\begin{enumerate}
\itemsep=-0.2em

\item Represent each term in $\mathbf{R}$ as a certain boolean matrix, which
we call a $res$-representation.

\item Represent each term in $\mathbf{S}$ as a certain pair of boolean
matrices, which we call a $syl$-representation.

\item Show that if a term has a $res$-representation then it has a $syl$-representation.

\item Show that if a term has a $syl$-representation then it has a $res$-representation.
\end{enumerate}

The representation of terms as boolean matrices are not essential from logical
point of view, but they are extremely helpful in discovering, explaining and
understanding the steps of the proof. The proof is constructive, that is, it
provides an algorithm that takes a $res$-representation and produces a
$syl$-representation, and vice versa. The implementation of the algorithms in
Maple~\cite{Mpuser15, Mpprog15} can be downloaded from

\begin{center}
\texttt{http://www.math.ncsu.edu/\textasciitilde hong/rcis/}
\end{center}

Note that all the above discussion is approached and stated algebraically in
the sense that we consider the equality between two algebraic objects
$\mathbf{R}$ and $\mathbf{S}$. Over the commutative ring case (in particular
integral domain), the algebraic equality has an obvious geometric
interpretation, namely two polynomials have common root iff $\mathbf{S}=0$.
Over the tropical semiring case, the algebraic equality can also be given a
similar geometric interpretation, using a modified notion of root~\cite{Od08,
ItMiSh09, BrSh14, MacStu15}. Thus one wonders whether the algebraic equality
over CIS can be also given a similar geometric interpretation. One way for
this would be extending the notion of root further. We leave it as a challenge
for future work.

The paper is structured as follows. In Section~\ref{review}, we recall the
axiomatic definition of CIS and list several algebraic structures that satisfy
the axioms. In Section~\ref{result}, we give a precise statement of the main
result of this paper. In Section~\ref{examples}, we illustrate the main result
on the algebraic structures listed in Section~\ref{review}. In
Section~\ref{sketch}, we informally sketch the overall structure/underlying
ideas of the proof by using a small case. In Section~\ref{proof}, we finally
provide a detailed and general proof of the main result.

%=============================================================================================
%review
%=============================================================================================

\section{Review of Commutative Idempotent Semiring}

\label{review} In this section, we review the definition of commutative
idempotent semiring, and list a few examples. For more details,
see~\cite{KM97, GuJ98,Gla02, Gol03}.

\begin{definition}
[Commutative Idempotent Semiring]\label{cisr} A Commutative Idempotent
Semiring (CIS) is a tuple $(\mathcal{I},+,\times,0,1)$ where $\mathcal{I}$ is
a set, $+$ and $\times$ are binary operations over $\mathcal{I}$ and $0,1$ are
elements of $\mathcal{I}$ such that the following properties hold for all
$a,b,c \in\mathcal{I}$:

\renewcommand{\arraystretch}{1.3}
\begin{tabular}
[c]{|c|c|c|}\hline
$+$ & $\times$ & $+$ and $\times$\\\hline
$a+b=b+a$ & $a\times b=b\times a$ & $\left(  a+b\right)  \times c=$ $a\times
c+b\times c$\\
$\left(  a+b\right)  +c=a+\left(  b+c\right)  $ & $\left(  a\times b\right)
\times c=a\times\left(  b\times c\right)  $ & \\
$a+0=a$ & $a\times1=a$ & \\
$a+ a=a$ & $a\times0=0$ & \ \\\hline
\end{tabular}

\end{definition}

\begin{remark}
Note that CIS does not require the existence of additive inverse, thus
semiring. It instead requires idempotency, thus idempotent semiring.
\end{remark}

\begin{example}
\label{cisr-ex} We list a few examples of commutative idempotent semirings
(CIS). \renewcommand{\arraystretch}{1.4}%

\begin{tabular}
[c]{|l|l|l|l|l|l|}\hline
name & $\mathcal{I}$ & $+$ & $\times$ & $0$ & $1$\\\hline
tropical & $\mathbb{R} \cup\{-\infty\}$ & maximum & addition & $-\infty$ &
$0$\\
power set & power set of a set $S$ & union & intersection & $\emptyset$ &
$S$\\
topology & topology on a set $S$ & union & intersection & $\emptyset$ & $S$\\
compact-convex & compact convex subsets of $\mathbb{R}^{n}$ & convex hull &
Minkowski sum & $\emptyset$ & $\{0\}$\\
sequences & sequences over a CIS $A$ & component-wise & convolution &
$(0_{A},\ldots)$ & $(1_{A},0_{A},\ldots)$\\
ideals & ideals of a comm. ring $R$ with unity & ideal sum & ideal product &
$\{0_{R}\}$ & $\langle1_{R} \rangle$\\\hline
\end{tabular}

\medskip

\noindent For the sequences,
\[
\begin{array}
[t]{llllllll}%
\text{component-wise} & : & \forall\; i \geq0 & (u+v)_{i} & = & u_{i}+_{A}
v_{i} &  & \\
\text{convolution} & : & \forall\;i \geq0 & (u\times v)_{i} & = &
\sum_{A,\;j,k\geq0, j+k=i}\ u_{j}\times_{A}v_{k} &  &
\end{array}
\]

\end{example}

%=============================================================================================
%result
%=============================================================================================

\section{Main Result}

\label{result} In this section, we give a precise statement of the main result
of this paper and, in the next section, we illustrate it on a few examples.
Let $\mathcal{I}$ be a commutative idempotent semiring (CIS). Let $\alpha
_{1},\ldots,\alpha_{m},\beta_{1},\ldots,\beta_{n} \in\mathcal{I}$. Let
\begin{align*}
f  &  =\prod_{i=1}^{m}\left(  x+\alpha_{i}\right)  \;\;\;\in\mathcal{I}[x]\\
g  &  =\prod_{j=1}^{n}\left(  x+\beta_{j}\right)  \;\;\;\in\mathcal{I}[x]
\end{align*}

\begin{definition}
[Resultant]\label{res} The resultant $\mathbf{R}$ of $f$ and $g$ is defined
as
\[
\mathbf{R}=\prod_{\substack{1\leq i\leq m\\1\leq j\leq n}}\left(  \alpha
_{i}+\beta_{j}\right)  \;\;\;\in\mathcal{I.}
\]

\end{definition}

\begin{example}
\label{ex:R} Let $m=3$ and $n=2.$ Then%
\[
\mathbf{R}=\left(  \alpha_{1}+\beta_{1}\right)  \left(  \alpha_{1}+\beta
_{2}\right)  \left(  \alpha_{2}+\beta_{1}\right)  \left(  \alpha_{2}+\beta
_{2}\right)  \left(  \alpha_{3}+\beta_{1}\right)  \left(  \alpha_{3}+\beta
_{2}\right)
\]

\end{example}

\begin{definition}
[Sylvester Matrix]\label{sylmat} Let $a_{0},\ldots,a_{m}\in\mathcal{I} $ and
$b_{0},\ldots,b_{n}\in\mathcal{I} $ be such that
\begin{align*}
f  &  =\sum_{i=0}^{m}a_{m-i}x^{i}\\
g  &  =\sum_{j=0}^{n}b_{n-j}x^{j}%
\end{align*}
Then the Sylvester matrix of $f$and $g$ is defined as%
\[
M=%
\begin{array}
[c]{@{}c@{\hspace{ - 20pt}}c}%
\underbrace{%
\begin{bmatrix}
a_{0} & \cdots & \cdots & \cdots & a_{m} &  & \\
& \ddots &  &  &  & \ddots & \\
&  & a_{0} & \cdots & \cdots & \cdots & a_{m}\\
b_{0} & \cdots & \cdots & b_{n} &  &  & \\
& \ddots &  &  & \ddots &  & \\
&  & \ddots &  &  & \ddots & \\
&  &  & b_{0} & \cdots & \cdots & b_{n}%
\end{bmatrix}
}_{\displaystyle{m+n}} &
\begin{array}
[c]{c}%
\left.
\begin{array}
[c]{c}%
\\
\\
\\
\\
\end{array}
\right\}  n\\
\left.
\begin{array}
[c]{c}%
\\
\\
\\
\\
\\
\end{array}
\right\}  m
\end{array}
\end{array}
\]

\end{definition}

\begin{example}
Let $m=3$ and $n=2.$ Then
\begin{align*}
f  &  =\left(  x+\alpha_{1}\right)  \left(  x+\alpha_{2}\right)  \left(
x+\alpha_{3}\right) \\
&  =a_{0}x^{3}+a_{1}x^{2}+a_{2}x^{1}+a_{3}x^{0}\\
g  &  =\left(  x+\beta_{1}\right)  \left(  x+\beta_{2}\right) \\
&  =b_{0}x^{2}+b_{1}x^{1}+b_{2}x^{0}%
\end{align*}
where%
\begin{align*}
a_{0}  &  =1\\
a_{1}  &  =\alpha_{1}+\alpha_{2}+\alpha_{3}\\
a_{2}  &  =\alpha_{1}\alpha_{2}+\alpha_{1}\alpha_{3}+\alpha_{2}\alpha_{3}\\
a_{3}  &  =\alpha_{1}\alpha_{2}\alpha_{3}\\
b_{0}  &  =1\\
b_{1}  &  =\beta_{1}+\beta_{2}\\
b_{2}  &  =\beta_{1}\beta_{2}%
\end{align*}
Thus the Sylvester matrix $M$ of $f$ and $g$ is given by%
\[
M=
\begin{bmatrix}
1 & \alpha_{1}+\alpha_{2}+\alpha_{3} & \alpha_{1}\alpha_{2}+\alpha_{1}%
\alpha_{3}+\alpha_{2}\alpha_{3} & \alpha_{1}\alpha_{2}\alpha_{3} & \\
& 1 & \alpha_{1}+\alpha_{2}+\alpha_{3} & \alpha_{1}\alpha_{2}+\alpha_{1}%
\alpha_{3}+\alpha_{2}\alpha_{3} & \alpha_{1}\alpha_{2}\alpha_{3}\\
1 & \beta_{1}+\beta_{2} & \beta_{1}\beta_{2} &  & \\
& 1 & \beta_{1}+\beta_{2} & \beta_{1}\beta_{2} & \\
&  & 1 & \beta_{1}+\beta_{2} & \beta_{1}\beta_{2}%
\end{bmatrix}
\]

\end{example}

\begin{definition}
[Sylvester expression]The Sylvester expression $\mathbf{S}$ of $f$ and $g$ is
defined as the permanent of the Sylvester matrix of $f$ and $g$, that is,
\[
\mathbf{S}=\mathop{\rm per}\left(  M\right)  \;\;\;\in\mathcal{I}.
\]

\end{definition}

\begin{example}
\label{ex:S} Let $m=3$ and $n=2.$ Then
\[
\mathbf{S}=\mathop{\rm per}%
\begin{bmatrix}
1 & \alpha_{1}+\alpha_{2}+\alpha_{3} & \alpha_{1}\alpha_{2}+\alpha_{1}%
\alpha_{3}+\alpha_{2}\alpha_{3} & \alpha_{1}\alpha_{2}\alpha_{3} & \\
& 1 & \alpha_{1}+\alpha_{2}+\alpha_{3} & \alpha_{1}\alpha_{2}+\alpha_{1}%
\alpha_{3}+\alpha_{2}\alpha_{3} & \alpha_{1}\alpha_{2}\alpha_{3}\\
1 & \beta_{1}+\beta_{2} & \beta_{1}\beta_{2} &  & \\
& 1 & \beta_{1}+\beta_{2} & \beta_{1}\beta_{2} & \\
&  & 1 & \beta_{1}+\beta_{2} & \beta_{1}\beta_{2}%
\end{bmatrix}
\]

\end{example}

\begin{theorem}
[Main Result]\label{main} $\mathbf{R}=\mathbf{S}.$
\end{theorem}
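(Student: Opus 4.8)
The plan is to expand both $\mathbf{R}$ and $\mathbf{S}$ into sums of monomials in the $\alpha_i$ and $\beta_j$, and show the two sets of monomials coincide (as sets — idempotency lets us ignore multiplicities). A generic term of $\mathbf{R}=\prod_{i,j}(\alpha_i+\beta_j)$ is obtained by choosing, for each pair $(i,j)$ with $1\le i\le m$, $1\le j\le n$, either $\alpha_i$ or $\beta_j$; I encode such a choice as an $m\times n$ boolean matrix $P$, where $P_{ij}=1$ means we picked $\alpha_i$ and $P_{ij}=0$ means we picked $\beta_j$. The resulting monomial is $\prod_i \alpha_i^{r_i}\prod_j \beta_j^{s_j}$ where $r_i$ is the number of $1$'s in row $i$ and $s_j$ the number of $0$'s in column $j$; note $r_i\le n$, $s_j\le m$, and $r_i + (\text{number of }0\text{'s in row }i)=n$ while similarly for columns. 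I would call $P$ a \emph{res-representation} of that monomial. On the other side, a term of $\mathbf{S}=\operatorname{per}M$ corresponds to a permutation $\sigma$ of the $m+n$ columns; the first $n$ rows of $M$ are shifts of $(a_0,\dots,a_m)$ and the last $m$ rows are shifts of $(b_0,\dots,b_n)$, and each $a_k$ is itself the elementary-symmetric sum $e_k(\alpha)$, each $b_k=e_k(\beta)$. So a monomial of $\mathbf{S}$ is obtained by choosing a permutation and then, within each selected $a_k$ or $b_k$ entry, choosing one of its symmetric-function monomials; this data I would record as a \emph{syl-representation} — essentially a pair of boolean matrices recording which $\alpha$'s (resp.\ $\beta$'s) are selected from the $a$-rows (resp.\ $b$-rows).

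The heart of the argument is the two inclusions. First I would show every res-monomial is a syl-monomial: given the boolean matrix $P$, I must build a permutation $\sigma$ of the columns of $M$ together with subset choices inside the symmetric functions so that the product of chosen entries equals $\prod_i\alpha_i^{r_i}\prod_j\beta_j^{s_j}$. The natural idea is to let the $n$ $a$-rows collectively supply the $\alpha$-part and the $m$ $b$-rows supply the $\beta$-part; since the $k$-th $a$-row, placed against column $c$, contributes $a_{c-k}=e_{c-k}(\alpha)$, I need the column assignments to produce the right total degrees $r_i$ in each $\alpha_i$, and crucially the column indices used by the $a$-rows and by the $b$-rows must together form a permutation (each column hit exactly once). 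The combinatorial feasibility of this — that the row/column sums $r_i,s_j$ arising from an arbitrary boolean $P$ can always be realized by a permutation — is where I expect the real work to lie; I would prove it by an explicit construction, e.g.\ sorting rows/columns of $P$ (after noting idempotency lets me permute indices freely) and reading off a "staircase" permutation, or by a Hall's-theorem / flow argument on the bipartite structure.

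Conversely, I would show every syl-monomial is a res-monomial: starting from a permutation $\sigma$ and the internal subset choices, I get a monomial $\prod_i\alpha_i^{r_i}\prod_j\beta_j^{s_j}$, and I must exhibit a boolean $m\times n$ matrix $P$ with those row-sums (in $\alpha$) and complementary column-sums (in $\beta$). Here I would verify that the degree vectors coming from the Sylvester matrix automatically satisfy the constraints $0\le r_i\le n$, $0\le s_j\le m$, $\sum_i r_i + \sum_j s_j = mn$, and — more delicately — a Gale–Ryser-type majorization condition guaranteeing that a $0/1$ matrix with prescribed row sums $r_i$ and column sums $n-\text{(dual)}$ exists. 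The structure of $M$ (band/shift form, with the two blocks offset) is exactly what forces these inequalities, so the bulk of this direction is extracting those inequalities from the shape of $M$ and then invoking (or re-proving constructively) the Gale–Ryser realizability criterion. Combining the two inclusions, the monomial sets agree, and since addition is idempotent and commutative, $\mathbf{R}=\mathbf{S}$.

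I expect the main obstacle to be the bookkeeping that matches "a permutation of $m+n$ columns split between an $n$-row block and an $m$-row block" on the Sylvester side with "an $m\times n$ boolean matrix" on the resultant side: getting the index shifts and the complementarity ($\alpha$ chosen $\iff$ $\beta$ not chosen, row-sum $+$ co-row-sum $=n$) to line up is fiddly, and proving the realizability of arbitrary prescribed row/column sums by an actual permutation-plus-selection (rather than just a boolean matrix) is the one genuinely nontrivial combinatorial lemma. The boolean-matrix language in the paper's outline is precisely there to tame this bookkeeping.
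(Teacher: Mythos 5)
Your overall architecture coincides with the paper's: encode each monomial of $\mathbf{R}$ by a boolean matrix with prescribed row sums $\mu$ and complementary column sums $\bar\nu$, encode each monomial of $\mathbf{S}$ by a pair of boolean matrices subject to the permutation (``properly coupled'') constraint, and prove the two inclusions. Your two expansion steps are correct, and your ``staircase'' idea for the res-to-syl direction is essentially the paper's construction: it pairs $\mathcal{R}$ with the bottom-left flushed matrix $F_\nu$ and verifies the permutation condition by induction on $m+n$ (Lemma~\ref{lem:Flushed_PC}); the only ingredient you omit is that one must first use the symmetry of $\mathbf{R}$ and $\mathbf{S}$ to sort $\mu$ and $\nu$, so that the adjusted column sums of $\mathcal{R}$ and the adjusted row sums of $F_\nu$ interleave correctly.

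The genuine gap is in the syl-to-res direction. You reduce it to a Gale--Ryser realizability statement: given a syl-representation with $rs(\mathcal{S}_1)=\mu$ and $cs(\mathcal{S}_2)=\nu$, you need a single $0/1$ matrix with row sums $\mu$ and column sums $\bar\nu_j=m-\nu_j$, which by Gale--Ryser requires the dominance inequality $\bar\nu\preceq\mu^{*}$. The equality of totals $\sum_i\mu_i+\sum_j\nu_j=mn$ does follow easily from the properly-coupled condition, but the dominance inequality does not: it must be extracted from the interaction of three separate pieces of data --- the column sums $c=cs(\mathcal{S}_1)$ (constrained against $\mu$ through $\mathcal{S}_1$), the row sums $r=rs(\mathcal{S}_2)$ (constrained against $\nu$ through $\mathcal{S}_2$), and the interleaving $\{c_j+j\}\sqcup\{r_i+i\}=\{1,\dots,m+n\}$. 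You acknowledge this is ``the one genuinely nontrivial combinatorial lemma'' but do not prove it, and it is not a routine verification; note that individually admissible vectors such as $\mu=(2,0)$, $\bar\nu=(2,0)$ for $m=n=2$ satisfy all the obvious necessary conditions yet are not realizable, so the properly-coupled condition must be used in an essential way. The paper avoids invoking Gale--Ryser altogether: Algorithms~\ref{alg:Sort} and~\ref{alg:Flush} transform an arbitrary syl-representation into a sorted-flushed one by explicit entry swaps that preserve $\mu$, $\nu$ and the coupling condition, after which Lemma~\ref{lem:Flushed} says the first component \emph{is} the desired res-representation. The termination and feasibility arguments for those swaps (claims \textsf{C3}--\textsf{C6} in the proof of Lemma~\ref{lem:Flush}) are precisely the combinatorial content your proposal leaves open; until you either carry out that construction or derive the dominance inequality directly from the coupling condition, the second inclusion, and hence the theorem, is not established.
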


\begin{example}
We illustrate the ``meaning'' of the main result (Theorem~\ref{main}) by
verifying it on small degrees: $m=3$ and $n=2$ where $\mathbf{R}$ and
$\mathbf{S}$ are described in Example~\ref{ex:R} and~\ref{ex:S}. When we
expand $\mathbf{R}$ and $\mathbf{S}$, we obtain

\medskip

$\mathbf{R}={\alpha_{{1}}}^{2}{\alpha_{{2}}}^{2}{\alpha_{{3}}}^{2}%
+{\alpha_{{1}}}^{2}{\alpha_{{2}}}^{2}\alpha_{{3}}\beta_{{1}}+{\alpha_{{1}}%
}^{2}{\alpha_{{2}}}^{2}\alpha_{{3}}\beta_{{2}}+{\alpha_{{1}}}^{2}{\alpha_{{2}%
}}^{2}\beta_{{1}}\beta_{{2}}+{\alpha_{{1}}}^{2}\alpha_{{2}}{\alpha_{{3}}}%
^{2}\beta_{{1}}+{\alpha_{{1}}}^{2}\alpha_{{2}}{\alpha_{{3}}}^{2}\beta_{{2}%
}+{\alpha_{{1}}}^{2}\alpha_{{2}}\alpha_{{3}}{\beta_{{1}}}^{2}+2\,{\alpha_{{1}%
}}^{2}\alpha_{{2}}\alpha_{{3}}\beta_{{1}}\beta_{{2}}+{\alpha_{{1}}}^{2}%
\alpha_{{2}}\alpha_{{3}}{\beta_{{2}}}^{2}+{\alpha_{{1}}}^{2}\alpha_{{2}}%
{\beta_{{1}}}^{2}\beta_{{2}}+{\alpha_{{1}}}^{2}\alpha_{{2}}\beta_{{1}}%
{\beta_{{2}}}^{2}+{\alpha_{{1}}}^{2}{\alpha_{{3}}}^{2}\beta_{{1}}\beta_{{2}%
}+{\alpha_{{1}}}^{2}\alpha_{{3}}{\beta_{{1}}}^{2}\beta_{{2}}+{\alpha_{{1}}%
}^{2}\alpha_{{3}}\beta_{{1}}{\beta_{{2}}}^{2}+{\alpha_{{1}}}^{2}{\beta_{{1}}%
}^{2}{\beta_{{2}}}^{2}+\alpha_{{1}}{\alpha_{{2}}}^{2}{\alpha_{{3}}}^{2}%
\beta_{{1}}+\alpha_{{1}}{\alpha_{{2}}}^{2}{\alpha_{{3}}}^{2}\beta_{{2}}%
+\alpha_{{1}}{\alpha_{{2}}}^{2}\alpha_{{3}}{\beta_{{1}}}^{2}+2\,\alpha_{{1}%
}{\alpha_{{2}}}^{2}\alpha_{{3}}\beta_{{1}}\beta_{{2}}+\alpha_{{1}}{\alpha
_{{2}}}^{2}\alpha_{{3}}{\beta_{{2}}}^{2}+\alpha_{{1}}{\alpha_{{2}}}^{2}%
{\beta_{{1}}}^{2}\beta_{{2}}+\alpha_{{1}}{\alpha_{{2}}}^{2}\beta_{{1}}%
{\beta_{{2}}}^{2}+\alpha_{{1}}\alpha_{{2}}{\alpha_{{3}}}^{2}{\beta_{{1}}}%
^{2}+2\,\alpha_{{1}}\alpha_{{2}}{\alpha_{{3}}}^{2}\beta_{{1}}\beta_{{2}%
}+\alpha_{{1}}\alpha_{{2}}{\alpha_{{3}}}^{2}{\beta_{{2}}}^{2}+\alpha_{{1}%
}\alpha_{{2}}\alpha_{{3}}{\beta_{{1}}}^{3}+3\,\alpha_{{1}}\alpha_{{2}}%
\alpha_{{3}}{\beta_{{1}}}^{2}\beta_{{2}}+3\,\alpha_{{1}}\alpha_{{2}}%
\alpha_{{3}}\beta_{{1}}{\beta_{{2}}}^{2}+\alpha_{{1}}\alpha_{{2}}\alpha_{{3}%
}{\beta_{{2}}}^{3}+\alpha_{{1}}\alpha_{{2}}{\beta_{{1}}}^{3}\beta_{{2}%
}+2\,\alpha_{{1}}\alpha_{{2}}{\beta_{{1}}}^{2}{\beta_{{2}}}^{2}+\alpha_{{1}%
}\alpha_{{2}}\beta_{{1}}{\beta_{{2}}}^{3}+\alpha_{{1}}{\alpha_{{3}}}^{2}%
{\beta_{{1}}}^{2}\beta_{{2}}+\alpha_{{1}}{\alpha_{{3}}}^{2}\beta_{{1}}%
{\beta_{{2}}}^{2}+\alpha_{{1}}\alpha_{{3}}{\beta_{{1}}}^{3}\beta_{{2}%
}+2\,\alpha_{{1}}\alpha_{{3}}{\beta_{{1}}}^{2}{\beta_{{2}}}^{2}+\alpha_{{1}%
}\alpha_{{3}}\beta_{{1}}{\beta_{{2}}}^{3}+\alpha_{{1}}{\beta_{{1}}}^{3}%
{\beta_{{2}}}^{2}+\alpha_{{1}}{\beta_{{1}}}^{2}{\beta_{{2}}}^{3}+{\alpha_{{2}%
}}^{2}{\alpha_{{3}}}^{2}\beta_{{1}}\beta_{{2}}+{\alpha_{{2}}}^{2}\alpha_{{3}%
}{\beta_{{1}}}^{2}\beta_{{2}}+{\alpha_{{2}}}^{2}\alpha_{{3}}\beta_{{1}}%
{\beta_{{2}}}^{2}+{\alpha_{{2}}}^{2}{\beta_{{1}}}^{2}{\beta_{{2}}}^{2}%
+\alpha_{{2}}{\alpha_{{3}}}^{2}{\beta_{{1}}}^{2}\beta_{{2}}+\alpha_{{2}%
}{\alpha_{{3}}}^{2}\beta_{{1}}{\beta_{{2}}}^{2}+\alpha_{{2}}\alpha_{{3}}%
{\beta_{{1}}}^{3}\beta_{{2}}+2\,\alpha_{{2}}\alpha_{{3}}{\beta_{{1}}}%
^{2}{\beta_{{2}}}^{2}+\alpha_{{2}}\alpha_{{3}}\beta_{{1}}{\beta_{{2}}}%
^{3}+\alpha_{{2}}{\beta_{{1}}}^{3}{\beta_{{2}}}^{2}+\alpha_{{2}}{\beta_{{1}}%
}^{2}{\beta_{{2}}}^{3}+{\alpha_{{3}}}^{2}{\beta_{{1}}}^{2}{\beta_{{2}}}%
^{2}+\alpha_{{3}}{\beta_{{1}}}^{3}{\beta_{{2}}}^{2}+\alpha_{{3}}{\beta_{{1}}%
}^{2}{\beta_{{2}}}^{3}+{\beta_{{1}}}^{3}{\beta_{{2}}}^{3}$

\medskip

$\mathbf{S}={\alpha_{{1}}}^{2}{\alpha_{{2}}}^{2}{\alpha_{{3}}}^{2}%
+{\alpha_{{1}}}^{2}{\alpha_{{2}}}^{2}\alpha_{{3}}\beta_{{1}}+{\alpha_{{1}}%
}^{2}{\alpha_{{2}}}^{2}\alpha_{{3}}\beta_{{2}}+{\alpha_{{1}}}^{2}{\alpha_{{2}%
}}^{2}\beta_{{1}}\beta_{{2}}+{\alpha_{{1}}}^{2}\alpha_{{2}}{\alpha_{{3}}}%
^{2}\beta_{{1}}+{\alpha_{{1}}}^{2}\alpha_{{2}}{\alpha_{{3}}}^{2}\beta_{{2}%
}+{\alpha_{{1}}}^{2}\alpha_{{2}}\alpha_{{3}}{\beta_{{1}}}^{2}+6\,{\alpha_{{1}%
}}^{2}\alpha_{{2}}\alpha_{{3}}\beta_{{1}}\beta_{{2}}+{\alpha_{{1}}}^{2}%
\alpha_{{2}}\alpha_{{3}}{\beta_{{2}}}^{2}+{\alpha_{{1}}}^{2}\alpha_{{2}}%
{\beta_{{1}}}^{2}\beta_{{2}}+{\alpha_{{1}}}^{2}\alpha_{{2}}\beta_{{1}}%
{\beta_{{2}}}^{2}+{\alpha_{{1}}}^{2}{\alpha_{{3}}}^{2}\beta_{{1}}\beta_{{2}%
}+{\alpha_{{1}}}^{2}\alpha_{{3}}{\beta_{{1}}}^{2}\beta_{{2}}+{\alpha_{{1}}%
}^{2}\alpha_{{3}}\beta_{{1}}{\beta_{{2}}}^{2}+{\alpha_{{1}}}^{2}{\beta_{{1}}%
}^{2}{\beta_{{2}}}^{2}+\alpha_{{1}}{\alpha_{{2}}}^{2}{\alpha_{{3}}}^{2}%
\beta_{{1}}+\alpha_{{1}}{\alpha_{{2}}}^{2}{\alpha_{{3}}}^{2}\beta_{{2}}%
+\alpha_{{1}}{\alpha_{{2}}}^{2}\alpha_{{3}}{\beta_{{1}}}^{2}+6\,\alpha_{{1}%
}{\alpha_{{2}}}^{2}\alpha_{{3}}\beta_{{1}}\beta_{{2}}+\alpha_{{1}}{\alpha
_{{2}}}^{2}\alpha_{{3}}{\beta_{{2}}}^{2}+\alpha_{{1}}{\alpha_{{2}}}^{2}%
{\beta_{{1}}}^{2}\beta_{{2}}+\alpha_{{1}}{\alpha_{{2}}}^{2}\beta_{{1}}%
{\beta_{{2}}}^{2}+\alpha_{{1}}\alpha_{{2}}{\alpha_{{3}}}^{2}{\beta_{{1}}}%
^{2}+6\,\alpha_{{1}}\alpha_{{2}}{\alpha_{{3}}}^{2}\beta_{{1}}\beta_{{2}%
}+\alpha_{{1}}\alpha_{{2}}{\alpha_{{3}}}^{2}{\beta_{{2}}}^{2}+\alpha_{{1}%
}\alpha_{{2}}\alpha_{{3}}{\beta_{{1}}}^{3}+9\,\alpha_{{1}}\alpha_{{2}}%
\alpha_{{3}}{\beta_{{1}}}^{2}\beta_{{2}}+9\,\alpha_{{1}}\alpha_{{2}}%
\alpha_{{3}}\beta_{{1}}{\beta_{{2}}}^{2}+\alpha_{{1}}\alpha_{{2}}\alpha_{{3}%
}{\beta_{{2}}}^{3}+\alpha_{{1}}\alpha_{{2}}{\beta_{{1}}}^{3}\beta_{{2}%
}+6\,\alpha_{{1}}\alpha_{{2}}{\beta_{{1}}}^{2}{\beta_{{2}}}^{2}+\alpha_{{1}%
}\alpha_{{2}}\beta_{{1}}{\beta_{{2}}}^{3}+\alpha_{{1}}{\alpha_{{3}}}^{2}%
{\beta_{{1}}}^{2}\beta_{{2}}+\alpha_{{1}}{\alpha_{{3}}}^{2}\beta_{{1}}%
{\beta_{{2}}}^{2}+\alpha_{{1}}\alpha_{{3}}{\beta_{{1}}}^{3}\beta_{{2}%
}+6\,\alpha_{{1}}\alpha_{{3}}{\beta_{{1}}}^{2}{\beta_{{2}}}^{2}+\alpha_{{1}%
}\alpha_{{3}}\beta_{{1}}{\beta_{{2}}}^{3}+\alpha_{{1}}{\beta_{{1}}}^{3}%
{\beta_{{2}}}^{2}+\alpha_{{1}}{\beta_{{1}}}^{2}{\beta_{{2}}}^{3}+{\alpha_{{2}%
}}^{2}{\alpha_{{3}}}^{2}\beta_{{1}}\beta_{{2}}+{\alpha_{{2}}}^{2}\alpha_{{3}%
}{\beta_{{1}}}^{2}\beta_{{2}}+{\alpha_{{2}}}^{2}\alpha_{{3}}\beta_{{1}}%
{\beta_{{2}}}^{2}+{\alpha_{{2}}}^{2}{\beta_{{1}}}^{2}{\beta_{{2}}}^{2}%
+\alpha_{{2}}{\alpha_{{3}}}^{2}{\beta_{{1}}}^{2}\beta_{{2}}+\alpha_{{2}%
}{\alpha_{{3}}}^{2}\beta_{{1}}{\beta_{{2}}}^{2}+\alpha_{{2}}\alpha_{{3}}%
{\beta_{{1}}}^{3}\beta_{{2}}+6\,\alpha_{{2}}\alpha_{{3}}{\beta_{{1}}}%
^{2}{\beta_{{2}}}^{2}+\alpha_{{2}}\alpha_{{3}}\beta_{{1}}{\beta_{{2}}}%
^{3}+\alpha_{{2}}{\beta_{{1}}}^{3}{\beta_{{2}}}^{2}+\alpha_{{2}}{\beta_{{1}}%
}^{2}{\beta_{{2}}}^{3}+{\alpha_{{3}}}^{2}{\beta_{{1}}}^{2}{\beta_{{2}}}%
^{2}+\alpha_{{3}}{\beta_{{1}}}^{3}{\beta_{{2}}}^{2}+\alpha_{{3}}{\beta_{{1}}%
}^{2}{\beta_{{2}}}^{3}+{\beta_{{1}}}^{3}{\beta_{{2}}}^{3}$

\medskip Observe that $\mathbf{R}$ and $\mathbf{S}$ have the same terms. Some
terms appear a different number of times. For example, ${\alpha_{{1}}}%
^{2}\alpha_{{2}}\alpha_{{3}}\beta_{{1}}\beta_{{2}}$ appears two times in
$\mathbf{R}$ and six times in $\mathbf{S}.$ However, since an commutative
idempotent semiring ignores additive multiplicities, it does not matter. Hence
we see that $\mathbf{R} =\mathbf{S}$.
\end{example}

%=============================================================================================
%example
%=============================================================================================

\section{Examples}

\label{examples} In this section, we will show computational examples on
\emph{particular} CIS structures given in Example~\ref{cisr-ex} of
Section~\ref{review} to confirm the validity of the main result
(Theorem~\ref{main}) before its general proof (given in Section~\ref{proof}).
We will use \emph{structure-specific languages} whenever possible, in the hope
that it would demonstrate the applicability of the main result in apparently
very different contexts. We will confirm the main result via direct
structure-specific computations. For easy computation, we consider only degree
two polynomials.

\begin{example}
[Tropical]Let $\mathcal{I}$ be the CIS of tropical semiring. Consider
\renewcommand\arraystretch{1.2}
\[
f=\max\left(  x,1\right)  +\max\left(  x,3\right)  ,\ \ g=\max\left(
x,2\right)  +\max\left(  x,4\right)
\]
We show that $\mathbf{R=S}$ for the above $f$ and $g$, via direct
computations. Note
\[
\begin{array}
[c]{lll}%
\mathbf{R} & = & \max\left(  1,2\right)  +\max\left(  1,4\right)  +\max\left(
3,2\right)  +\max\left(  3,4\right) \\
& = & 2+4+3+4\\
& = & 13\\
&  & \\
\mathbf{S} & = & \mathrm{per}\left[
\begin{array}
[c]{cccc}%
0 & \max\left(  1,3\right)  & 1+3 & -\infty\\
-\infty & 0 & \max\left(  1,3\right)  & \max\left(  1,3\right) \\
0 & \max\left(  2,4\right)  & 2+4 & -\infty\\
-\infty & 0 & \max\left(  2,4\right)  & 2+4
\end{array}
\right] \\
& = & \mathrm{per}\left[
\begin{array}
[c]{cccc}%
0 & 3 & 4 & -\infty\\
-\infty & 0 & 3 & 4\\
0 & 4 & 6 & -\infty\\
-\infty & 0 & 4 & 6
\end{array}
\right] \\
& = & \max\left(  4+6,3+4+4,3+4+6,2\cdot4,2\cdot6,4+2\cdot4,2\cdot3+6\right)
\\
& = & \max\left(  10,11,13,8,12,12,12\right) \\
& = & 13
\end{array}
\]
Thus we have $\mathbf{R}=\mathbf{S}$.
\end{example}

\begin{example}
[Power set]Let $\mathcal{I}$ be the CIS of the power set of~$\mathbb{R}$.
Consider \renewcommand\arraystretch{1.2}
\[
f=(x\cup[1,2])\;\cap\;(x\cup[3,4]),\;\;\;\; g=\left(  x\cup[2,3]\right)
\;\cap\;\left(  x\cup[4,5]\right)
\]
We show that $\mathbf{R=S}$ for the above $f$ and $g$, via direct
computations. Note
\[%
\begin{array}
[c]{lll}%
\mathbf{R} & = & \left(  [1,2] \cup[2,3]\right)  \;\;\cap\;\; \left(  [1,2]
\cup[4,5]\right)  \;\;\cap\;\; \left(  [3,4] \cup[2,3]\right)  \;\;\cap\;\;
\left(  [3,4] \cup[4,5]\right) \\
& = & [1,3]\;\;\cap\;\;\left(  [1,2]\cup[4,5]\right)  \;\;\cap\;\; [2,4]
\;\;\cap\;\; [3,5]\\
& = & [1,3]\;\;\cap\;\;\left(  [1,2]\cup[4,5]\right)  \;\;\cap\;\; [3,4]\\
& = & [3,3]\;\;\cap\;\;\left(  [1,2]\cup[4,5]\right) \\
& = & \emptyset\\
&  & \\
\mathbf{S} & = & \mathrm{per}\left[
\begin{array}
[c]{cccc}%
\mathbb{R} & [1,2]\cup[3,4] & [1,2]\cap[3,4] & \emptyset\\
\emptyset & \mathbb{R} & [1,2]\cup[3,4] & [1,2]\cap[3,4]\\
\mathbb{R} & [2,3]\cup[4,5] & [2,3]\cap[4,5] & \emptyset\\
\emptyset & \mathbb{R} & [2,3]\cup[4,5] & [2,3]\cap[4,5]
\end{array}
\right] \\
& = & \mathrm{per}\left[
\begin{array}
[c]{cccc}%
\mathbb{R} & [1,2]\cup[3,4] & \emptyset & \emptyset\\
\emptyset & \mathbb{R} & [1,2]\cup[3,4] & \emptyset\\
\mathbb{R} & [2,3]\cup[4,5] & \emptyset & \emptyset\\
\emptyset & \mathbb{R} & [2,3]\cup[4,5] & \emptyset
\end{array}
\right] \\
& = & \emptyset
\end{array}
\]
Thus we have $\mathbf{R}=\mathbf{S}$.
\end{example}

\begin{example}
[Topology]Let $\mathcal{I}$ be a CIS of topology. In particular, we consider
the cofinite topology on~$\mathbb{R}$; that is, open subsets are the empty set
and the complementary of finite subsets of $\mathbb{R}$. This CIS is, of
course, a sub-CIS of the power set CIS of~$\mathbb{R}$ in the previous
example. But it is still interesting to consider, due to its importance.
Consider
\[
f=\Big(x\cup(\mathbb{R} \setminus\{0,1\})\Big)\;\cap\;\Big(x\cup(\mathbb{R}
\setminus\{0,2\})\Big),\;\;\;\; g=\Big(x\cup(\mathbb{R} \setminus
\{0,-1\})\Big)\;\cap\;\Big(  x\cup(\mathbb{R} \setminus\{0,-2\})\Big)
\]
We show that $\mathbf{R=S}$ for the above $f$ and $g$, via direct
computations. Note \renewcommand\arraystretch{1.3}
\[%
\begin{array}
[c]{lll}%
\mathbf{R} & = & \left(  (\mathbb{R} \setminus\{0,1\}) \cup(\mathbb{R}
\setminus\{0,-1\}) \right)  \;\;\cap\;\;\left(  (\mathbb{R} \setminus\{0,1\})
\cup(\mathbb{R} \setminus\{0,-2\}) \right)  \;\;\cap\;\;\\
&  & \left(  (\mathbb{R} \setminus\{0,2\}) \cup(\mathbb{R} \setminus
\{0,-1\})\right)  \;\;\cap\;\; \left(  (\mathbb{R} \setminus\{0,2\})
\cup(\mathbb{R} \setminus\{0,-2\}) \right) \\
& = & \mathbb{R}\setminus\{0\}\\
&  & \\
\mathbf{S} & = & \mathrm{per}\left[
\begin{array}
[c]{cccc}%
\mathbb{R} & (\mathbb{R} \setminus\{0,1\})\cup(\mathbb{R} \setminus\{0,2\}) &
(\mathbb{R} \setminus\{0,1\})\cap(\mathbb{R} \setminus\{0,2\}) & \emptyset\\
\emptyset & \mathbb{R} & (\mathbb{R} \setminus\{0,1\})\cup(\mathbb{R}
\setminus\{0,2\}) & (\mathbb{R} \setminus\{0,1\})\cap(\mathbb{R}
\setminus\{0,2\})\\
\mathbb{R} & (\mathbb{R}\setminus\{0,-1\})\cup(\mathbb{R} \setminus\{0,-2\}) &
(\mathbb{R} \setminus\{0,-1\})\cap(\mathbb{R} \setminus\{0,-2\}) & \emptyset\\
\emptyset & \mathbb{R} & (\mathbb{R}\setminus\{0,-1\})\cup(\mathbb{R}
\setminus\{0,-2\}) & (\mathbb{R}\setminus\{0,-1\})\cap(\mathbb{R}
\setminus\{0,-2\})
\end{array}
\right] \\
& = & \mathrm{per}\left[
\begin{array}
[c]{cccc}%
\mathbb{R} & \mathbb{R} \setminus\{0\} & \mathbb{R} \setminus\{0,1,2\} &
\emptyset\\
\emptyset & \mathbb{R} & \mathbb{R} \setminus\{0\} & \mathbb{R} \setminus
\{0,1,2\}\\
\mathbb{R} & \mathbb{R}\setminus\{0\} & \mathbb{R} \setminus\{0,-1,-2\} &
\emptyset\\
\emptyset & \mathbb{R} & \mathbb{R}\setminus\{0\} & \mathbb{R} \setminus
\{0,-1,-2\}
\end{array}
\right] \\
& = & (\mathbb{R} \setminus\{0\})^{2} \cup((\mathbb{R} \setminus\{0\})^{2}%
\cap(\mathbb{R} \setminus\{0,-1,-2\}) \cup((\mathbb{R} \setminus\{0\})^{2}%
\cap(\mathbb{R} \setminus\{0,1,2\})) \cup\\
&  & (\mathbb{R} \setminus\{0,-1,-2\})^{2} \cup((\mathbb{R} \setminus
\{0,-1,-2\})^{2})\cap(\mathbb{R} \setminus\{0,1,2\})) \cup(\mathbb{R}
\setminus\{0,1,2\})^{2}\\
& = & (\mathbb{R} \setminus\{0\}) \cup(\mathbb{R} \setminus\{0,-1,-2\})
\cup(\mathbb{R} \setminus\{0,1,2\}) \cup(\mathbb{R} \setminus\{0,-1,-2\})
\cup((\mathbb{R} \setminus\{0,-1,-2,1,2\}) \cup(\mathbb{R} \setminus
\{0,1,2\})\\
& = & \mathbb{R} \setminus\{0\}
\end{array}
\]
Thus we have $\mathbf{R}=\mathbf{S}$.
\end{example}

\begin{example}
[Compact-convex]Let $\mathcal{I}$ be the CIS of the set of all compact convex
subsets of $\mathbb{R}^{2}$. Let $\langle x_{1},\ldots,x_{r} \rangle$ denote
the convex hull of the points $x_{1},\ldots,x_{r} \in\mathbb{R}^{2}$.
Consider
\[
f=\Big(x+\langle(0,0)\rangle\Big)\cdot\Big(x+\langle(0,0),(1,0)\rangle
\Big),\,\;\;\;g=\Big(x+\langle(0,0)\rangle\Big)\cdot\Big(x+\langle
(0,0),(0,1)\rangle\Big).
\]
We show that $\mathbf{R=S}$ for the above $f$ and $g$, via direct
computations. Note \renewcommand\arraystretch{1.3}
\[%
\begin{array}
[c]{lll}%
\mathbf{R} & = & \Big(\langle(0,0)\rangle+\langle(0,0)\rangle\Big)\cdot
\Big(\langle(0,0)\rangle+\langle(0,0),(0,1)\rangle\Big)\cdot\Big (\langle
(0,0),(1,0)\rangle+\langle(0,0)\rangle\Big)\cdot\Big(\langle(0,0),(1,0)\rangle
+\langle(0,0),(0,1)\rangle\Big )\\
& = & \left\langle (0,0)\right\rangle \cdot\left\langle
(0,0),(0,1)\right\rangle \cdot\left\langle (0,0),(1,0)\right\rangle
\cdot\left\langle (0,0),(0,1),(1,0)\right\rangle \ \ \text{}\\
& = & \left\langle (0,0),(0,2),(1,2),(2,0),(2,1)\right\rangle \ \text{}\\
&  & \\
\mathbf{S} & = & \mathrm{per}\left[
\begin{array}
[c]{cccc}%
\langle(0,0)\rangle & \langle(0,0)\rangle+\langle(0,0),(1,0)\rangle &
\langle(0,0)\rangle\cdot\langle(0,0),(1,0)\rangle & \emptyset\\
\emptyset & \langle(0,0)\rangle & \langle(0,0)\rangle+\langle
(0,0),(1,0)\rangle & \langle(0,0)\rangle\cdot\langle(0,0),(1,0)\rangle\\
\langle(0,0)\rangle & \langle(0,0)\rangle+\langle(0,0),(0,1)\rangle &
\langle(0,0)\rangle\cdot\langle(0,0),(0,1)\rangle & \emptyset\\
\emptyset & \langle(0,0)\rangle & \langle(0,0)\rangle+\langle
(0,0),(0,1)\rangle & \langle(0,0)\rangle\cdot\langle(0,0),(0,1)\rangle
\end{array}
\right] \\
& = & \mathrm{per}\left[
\begin{array}
[c]{cccc}%
\langle(0,0)\rangle & \langle(0,0),(1,0)\rangle & \langle(0,0),(1,0)\rangle &
\emptyset\\
\emptyset & \langle(0,0)\rangle & \langle(0,0),(1,0)\rangle & \langle
(0,0),(1,0)\rangle\\
\langle(0,0)\rangle & \langle(0,0),(0,1)\rangle & \langle(0,0),(0,1)\rangle &
\emptyset\\
\emptyset & \langle(0,0)\rangle & \langle(0,0),(0,1)\rangle & \langle
(0,0),(0,1)\rangle
\end{array}
\right] \\
& = & \langle(0,0),(1,0)\rangle\cdot\langle(0,0),(0,1)\rangle+\langle
(0,0),(1,0)\rangle^{2}+\langle(0,0),(0,1)\rangle^{2}+\\
&  & \langle(0,0),(1,0)\rangle\cdot\langle(0,0),(0,1)\rangle^{2}+
\langle(0,0),(1,0)\rangle^{2}\cdot\langle(0,0),(0,1)\rangle\\
& = & \left\langle (0,0),(0,1),(1,0),(1,1)\right\rangle +\left\langle
(0,0),(2,0)\right\rangle +\left\langle (0,0),(0,1),(0,2)\right\rangle +\\
&  & \left\langle (0,0),(0,2),(1,0),(1,2)\right\rangle +\left\langle
(0,0),(0,1),(2,0),(2,1)\right\rangle \ \ \text{}\\
& = & \left\langle (0,0),(0,2),(1,2),(2,0),(2,1)\right\rangle \ \ \ \text{}%
\end{array}
\]
Thus we have $\mathbf{R}=\mathbf{S}$.
\end{example}

\begin{example}
[Sequences]$^{{}}$Let $\mathcal{I}$ be the CIS of the set of all sequences
over the boolean algebra \{\texttt{T},\texttt{F}\}. Consider
\[
f = \Big(x+(\mathtt{T},\mathtt{T},\mathtt{F},\ldots)\Big)\Big(x+(\mathtt{T}%
,\mathtt{T},\mathtt{T},\mathtt{F},\ldots)\Big),\;\;\; g = \Big(x+(\mathtt{T}%
,\mathtt{T},\mathtt{T},\mathtt{T},\mathtt{F},\ldots)\Big)\Big(x+(\mathtt{T}%
,\mathtt{T},\mathtt{T},\mathtt{T},\mathtt{T},\mathtt{F},\ldots)\Big)
\]
In order to simplify the presentation, we will use the following short-hands.
Let $s_{i}$ denote the sequence $(\mathtt{T},\ldots,\mathtt{T},\mathtt{F}%
,\ldots)$ where \texttt{T} appears $i+1$ times. Then we can write $f$ and $g$
succinctly as
\[
f=(x+s_{1})(x+s_{2}),\,\;\;\;g=(x+s_{3})(x+s_{4}).
\]
Note that $s_{-1}$ is the additive identity and $s_{0}$ is the multiplicative
identity. Note also that $s_{i}+s_{j}=s_{\max\left\{  i,j\right\}  }$ and
$s_{i}s_{j}=s_{i+j}$ for $i,j\geq0.$ \medskip

\noindent We show that $\mathbf{R=S}$ for the above $f$ and $g$, via direct
computations. Note

\noindent\renewcommand\arraystretch{1.3}
\[%
\begin{array}
[c]{lll}%
\mathbf{R} & = & (s_{1}+s_{3})(s_{1}+s_{4})(s_{2}+s_{3})(s_{2}+s_{4})\\
& = & s_{3}s_{4}s_{3}s_{4}\\
& = & s_{14}\\
& = & (\mathtt{T},\ldots\mathtt{T},\mathtt{F},\ldots) \;\;\;\text{where
\texttt{T} is repeated 15 times}\\
&  & \\
\mathbf{S} & = & \mathrm{per}\left[
\begin{array}
[c]{cccc}%
s_{0} & s_{1}+s_{2} & s_{1}s_{2} & s_{-1}\\
s_{-1} & s_{0} & s_{1}+s_{2} & s_{1}s_{2}\\
s_{0} & s_{3}+s_{4} & s_{3}s_{4} & s_{-1}\\
s_{-1} & s_{0} & s_{3}+s_{4} & s_{3}s_{4}%
\end{array}
\right] \\
& = & \mathrm{per}\left[
\begin{array}
[c]{cccc}%
s_{0} & s_{2} & s_{3} & s_{-1}\\
s_{-1} & s_{0} & s_{2} & s_{3}\\
s_{0} & s_{4} & s_{7} & s_{-1}\\
s_{-1} & s_{0} & s_{4} & s_{7}%
\end{array}
\right] \\
& = & s_{2}^{2}s_{7}+s_{2}s_{3}s_{4}+s_{2}s_{4}s_{7}+s_{3}s_{4}^{2}+s_{3}%
^{2}+s_{3}s_{7}+s_{7}^{2}\\
& = & s_{11}+s_{9}+s_{13}+s_{11}+s_{6}+s_{10}+s_{14}\\
& = & s_{14}\\
& = & (\mathtt{T},\ldots\mathtt{T},\mathtt{F},\ldots) \;\;\;\text{where
\texttt{T} is repeated 15 times}%
\end{array}
\]
Thus we have $\mathbf{R}=\mathbf{S}$.
\end{example}

\begin{example}
[Ideals]Let $\mathcal{I}$ be the CIS of the set of all the ideals of
$\mathbb{C}[v_{1},v_{2}]$. Consider
\[
f=\Big(x+\langle v_{1}^{2}+v_{2}^{2}-1^{2}\rangle\Big)\Big(x+\langle v_{1}%
^{2}+v_{2}^{2}-2^{2}\rangle\Big),\,\;\;\; g=\Big(x+\langle v_{1}^{2}-v_{2}%
^{2}-1^{2}\rangle\Big)\Big(x+\langle v_{1}^{2}-v_{2}^{2}-2^{2}\rangle\Big)
\]
In order to simplify the presentation, we will use the following short-hands.
\[
I_{i}=\langle v_{1}^{2}+v_{2}^{2}-i^{2}\rangle,\;\;\;J_{j}=\langle v_{1}%
^{2}-v_{2}^{2}-j^{2}\rangle
\]
Then we can write $f$ and $g$ succinctly as
\[
f=(x+I_{1})(x+I_{2}),\,\;\;\;g=(x+J_{1})(x+J_{2})
\]
We show that $\mathbf{R=S}$ for the particular $f$ and $g$, via direct
computations. Note \renewcommand\arraystretch{1.2}
\[
\begin{array}
[c]{lll}%
\mathbf{R} & = & \left(  I_{1}+J_{1}\right)  \left(  I_{1}+J_{2}\right)
\left(  I_{2}+J_{1}\right)  \left(  I_{2}+J_{2}\right) \\
&  & \\
\mathbf{S} & = & \mathrm{per}\left[
\begin{array}
[c]{cccc}%
\langle1 \rangle & I_{1}+I_{2} & I_{1}I_{2} & \left\{  0\right\} \\
\left\{  0\right\}  & \langle1 \rangle & I_{1}+I_{2} & I_{1}I_{2}\\
\langle1 \rangle & J_{1}+J_{2} & J_{1}J_{2} & \left\{  0\right\} \\
\left\{  0\right\}  & \langle1 \rangle & J_{1}+J_{2} & J_{1}J_{2}%
\end{array}
\right] \\
& = & \mathrm{per}\left[
\begin{array}
[c]{cccc}%
\langle1 \rangle & \langle1 \rangle & I_{1}I_{2} & \left\{  0\right\} \\
\left\{  0\right\}  & \langle1 \rangle & \langle1 \rangle & I_{1}I_{2}\\
\langle1 \rangle & \langle1 \rangle & J_{1}J_{2} & \left\{  0\right\} \\
\left\{  0\right\}  & \langle1 \rangle & \langle1 \rangle & J_{1}J_{2}%
\end{array}
\right] \\
& = & (I_{1}I_{2})^{2} + (J_{1}J_{2})^{2} + (I_{1}I_{2})(J_{1}J_{2}) +
I_{1}I_{2} + J_{1} J_{2}%
\end{array}
\]

\noindent After carrying out ideal additions and multiplications in a
straightforward manner~\cite{AtMc69, CLO97, Ku13}, we obtain \medskip

$\mathbf{R}=\langle{v_{{1}}}^{8}-2\,{v_{{1}}}^{4}{v_{{2}}}^{4}+{v_{{2}}}%
^{8}-10 \,{v_{{1}}}^{6}+10\,{v_{{1}}}^{2}{v_{{2}}}^{4}+33\,{v_{{1}}}^{4}-17\,{
v_{{2}}}^{4}-40\,{v_{{1}}}^{2}+16,{v_{{1}}}^{8}-2\,{v_{{1}}}^{4}{v_{{2 }}}%
^{4}+{v_{{2}}}^{8}-13\,{v_{{1}}}^{6}-3\,{v_{{1}}}^{4}{v_{{2}}}^{2}+
13\,{v_{{1}}}^{2}{v_{{2}}}^{4}+3\,{v_{{2}}}^{6}+60\,{v_{{1}}}^{4}+24\,
{v_{{1}}}^{2}{v_{{2}}}^{2}-20\,{v_{{2}}}^{4}-112\,{v_{{1}}}^{2}-48\,{v _{{2}}%
}^{2}+64,{v_{{1}}}^{8}-2\,{v_{{1}}}^{4}{v_{{2}}}^{4}+{v_{{2}}}^{
8}-13\,{v_{{1}}}^{6}+3\,{v_{{1}}}^{4}{v_{{2}}}^{2}+13\,{v_{{1}}}^{2}{v _{{2}}%
}^{4}-3\,{v_{{2}}}^{6}+60\,{v_{{1}}}^{4}-24\,{v_{{1}}}^{2}{v_{{2 }}}%
^{2}-20\,{v_{{2}}}^{4}-112\,{v_{{1}}}^{2}+48\,{v_{{2}}}^{2}+64,{v_{ {1}}}%
^{8}-2\,{v_{{1}}}^{4}{v_{{2}}}^{4}+{v_{{2}}}^{8}-7\,{v_{{1}}}^{6} -3\,{v_{{1}%
}}^{4}{v_{{2}}}^{2}+7\,{v_{{1}}}^{2}{v_{{2}}}^{4}+3\,{v_{{2 }}}^{6}%
+15\,{v_{{1}}}^{4}+6\,{v_{{1}}}^{2}{v_{{2}}}^{2}-5\,{v_{{2}}}^{ 4}%
-13\,{v_{{1}}}^{2}-3\,{v_{{2}}}^{2}+4,{v_{{1}}}^{8}-2\,{v_{{1}}}^{4} {v_{{2}}%
}^{4}+{v_{{2}}}^{8}-7\,{v_{{1}}}^{6}+3\,{v_{{1}}}^{4}{v_{{2}}} ^{2}%
+7\,{v_{{1}}}^{2}{v_{{2}}}^{4}-3\,{v_{{2}}}^{6}+15\,{v_{{1}}}^{4}- 6\,{v_{{1}%
}}^{2}{v_{{2}}}^{2}-5\,{v_{{2}}}^{4}-13\,{v_{{1}}}^{2}+3\,{v _{{2}}}%
^{2}+4,{v_{{1}}}^{8}-2\,{v_{{1}}}^{6}{v_{{2}}}^{2}+2\,{v_{{1}}} ^{2}{v_{{2}}%
}^{6}-{v_{{2}}}^{8}-13\,{v_{{1}}}^{6}+21\,{v_{{1}}}^{4}{v_{{2}}}%
^{2}-3\,{v_{{1}}}^{2}{v_{{2}}}^{4}-5\,{v_{{2}}}^{6}+60\,{v_{{1}} }%
^{4}-72\,{v_{{1}}}^{2}{v_{{2}}}^{2}+12\,{v_{{2}}}^{4}-112\,{v_{{1}}}%
^{2}+80\,{v_{{2}}}^{2}+64,{v_{{1}}}^{8}-2\,{v_{{1}}}^{6}{v_{{2}}}^{2}+2
\,{v_{{1}}}^{2}{v_{{2}}}^{6}-{v_{{2}}}^{8}-10\,{v_{{1}}}^{6}+12\,{v_{{ 1}}%
}^{4}{v_{{2}}}^{2}+6\,{v_{{1}}}^{2}{v_{{2}}}^{4}-8\,{v_{{2}}}^{6}+
33\,{v_{{1}}}^{4}-18\,{v_{{1}}}^{2}{v_{{2}}}^{2}-15\,{v_{{2}}}^{4}-40
\,{v_{{1}}}^{2}+8\,{v_{{2}}}^{2}+16,{v_{{1}}}^{8}-2\,{v_{{1}}}^{6}{v_{ {2}}%
}^{2}+2\,{v_{{1}}}^{2}{v_{{2}}}^{6}-{v_{{2}}}^{8}-10\,{v_{{1}}}^{6
}+18\,{v_{{1}}}^{4}{v_{{2}}}^{2}-6\,{v_{{1}}}^{2}{v_{{2}}}^{4}-2\,{v_{ {2}}%
}^{6}+33\,{v_{{1}}}^{4}-48\,{v_{{1}}}^{2}{v_{{2}}}^{2}+15\,{v_{{2} }}%
^{4}-40\,{v_{{1}}}^{2}+32\,{v_{{2}}}^{2}+16,{v_{{1}}}^{8}-2\,{v_{{1} }}%
^{6}{v_{{2}}}^{2}+2\,{v_{{1}}}^{2}{v_{{2}}}^{6}-{v_{{2}}}^{8}-7\,{v_{{1}}}%
^{6}+9\,{v_{{1}}}^{4}{v_{{2}}}^{2}+3\,{v_{{1}}}^{2}{v_{{2}}}^{4} -5\,{v_{{2}}%
}^{6}+15\,{v_{{1}}}^{4}-12\,{v_{{1}}}^{2}{v_{{2}}}^{2}-3\, {v_{{2}}}%
^{4}-13\,{v_{{1}}}^{2}+5\,{v_{{2}}}^{2}+4,{v_{{1}}}^{8}+2\,{v _{{1}}}%
^{6}{v_{{2}}}^{2}-2\,{v_{{1}}}^{2}{v_{{2}}}^{6}-{v_{{2}}}^{8}- 13\,{v_{{1}}%
}^{6}-21\,{v_{{1}}}^{4}{v_{{2}}}^{2}-3\,{v_{{1}}}^{2}{v_{{ 2}}}^{4}%
+5\,{v_{{2}}}^{6}+60\,{v_{{1}}}^{4}+72\,{v_{{1}}}^{2}{v_{{2}}} ^{2}%
+12\,{v_{{2}}}^{4}-112\,{v_{{1}}}^{2}-80\,{v_{{2}}}^{2}+64,{v_{{1} }}%
^{8}+2\,{v_{{1}}}^{6}{v_{{2}}}^{2}-2\,{v_{{1}}}^{2}{v_{{2}}}^{6}-{v_{{2}}}%
^{8}-10\,{v_{{1}}}^{6}-18\,{v_{{1}}}^{4}{v_{{2}}}^{2}-6\,{v_{{1} }}^{2}%
{v_{{2}}}^{4}+2\,{v_{{2}}}^{6}+33\,{v_{{1}}}^{4}+48\,{v_{{1}}}^{ 2}{v_{{2}}%
}^{2}+15\,{v_{{2}}}^{4}-40\,{v_{{1}}}^{2}-32\,{v_{{2}}}^{2}+ 16,{v_{{1}}}%
^{8}+2\,{v_{{1}}}^{6}{v_{{2}}}^{2}-2\,{v_{{1}}}^{2}{v_{{2} }}^{6}-{v_{{2}}%
}^{8}-10\,{v_{{1}}}^{6}-12\,{v_{{1}}}^{4}{v_{{2}}}^{2}+ 6\,{v_{{1}}}%
^{2}{v_{{2}}}^{4}+8\,{v_{{2}}}^{6}+33\,{v_{{1}}}^{4}+18\,{ v_{{1}}}^{2}%
{v_{{2}}}^{2}-15\,{v_{{2}}}^{4}-40\,{v_{{1}}}^{2}-8\,{v_{{ 2}}}^{2}%
+16,{v_{{1}}}^{8}+2\,{v_{{1}}}^{6}{v_{{2}}}^{2}-2\,{v_{{1}}}^{ 2}{v_{{2}}}%
^{6}-{v_{{2}}}^{8}-7\,{v_{{1}}}^{6}-9\,{v_{{1}}}^{4}{v_{{2} }}^{2}+3\,{v_{{1}%
}}^{2}{v_{{2}}}^{4}+5\,{v_{{2}}}^{6}+15\,{v_{{1}}}^{4 }+12\,{v_{{1}}}%
^{2}{v_{{2}}}^{2}-3\,{v_{{2}}}^{4}-13\,{v_{{1}}}^{2}-5 \,{v_{{2}}}%
^{2}+4,{v_{{1}}}^{8}-4\,{v_{{1}}}^{6}{v_{{2}}}^{2}+6\,{v_{{ 1}}}^{4}{v_{{2}}%
}^{4}-4\,{v_{{1}}}^{2}{v_{{2}}}^{6}+{v_{{2}}}^{8}-10\, {v_{{1}}}%
^{6}+30\,{v_{{1}}}^{4}{v_{{2}}}^{2}-30\,{v_{{1}}}^{2}{v_{{2}} }^{4}%
+10\,{v_{{2}}}^{6}+33\,{v_{{1}}}^{4}-66\,{v_{{1}}}^{2}{v_{{2}}}^{
2}+33\,{v_{{2}}}^{4}-40\,{v_{{1}}}^{2}+40\,{v_{{2}}}^{2}+16,{v_{{1}}}%
^{8}+4\,{v_{{1}}}^{6}{v_{{2}}}^{2}+6\,{v_{{1}}}^{4}{v_{{2}}}^{4}+4\,{v_{{1}}%
}^{2}{v_{{2}}}^{6}+{v_{{2}}}^{8}-10\,{v_{{1}}}^{6}-30\,{v_{{1}}}^{4}{v_{{2}}%
}^{2}-30\,{v_{{1}}}^{2}{v_{{2}}}^{4}-10\,{v_{{2}}}^{6}+33\, {v_{{1}}}%
^{4}+66\,{v_{{1}}}^{2}{v_{{2}}}^{2}+33\,{v_{{2}}}^{4}-40\,{v_{{1}}}%
^{2}-40\,{v_{{2}}}^{2}+16\rangle$

\medskip

$\mathbf{S}=\langle{v_{{1}}}^{4}-2\,{v_{{1}}}^{2}{v_{{2}}}^{2}+{v_{{2}}}%
^{4}-5\, {v_{{1}}}^{2}+5\,{v_{{2}}}^{2}+4,{v_{{1}}}^{4}+2\,{v_{{1}}}%
^{2}{v_{{2} }}^{2}+{v_{{2}}}^{4}-5\,{v_{{1}}}^{2}-5\,{v_{{2}}}^{2}+4,{v_{{1}}%
}^{8} -2\,{v_{{1}}}^{4}{v_{{2}}}^{4}+{v_{{2}}}^{8}-10\,{v_{{1}}}%
^{6}+10\,{v_{{1}}}^{2}{v_{{2}}}^{4}+33\,{v_{{1}}}^{4}-17\,{v_{{2}}}%
^{4}-40\,{v_{{1 }}}^{2}+16,{v_{{1}}}^{8}-4\,{v_{{1}}}^{6}{v_{{2}}}%
^{2}+6\,{v_{{1}}}^{4 }{v_{{2}}}^{4}-4\,{v_{{1}}}^{2}{v_{{2}}}^{6}+{v_{{2}}%
}^{8}-10\,{v_{{1} }}^{6}+30\,{v_{{1}}}^{4}{v_{{2}}}^{2}-30\,{v_{{1}}}%
^{2}{v_{{2}}}^{4}+ 10\,{v_{{2}}}^{6}+33\,{v_{{1}}}^{4}-66\,{v_{{1}}}%
^{2}{v_{{2}}}^{2}+33 \,{v_{{2}}}^{4}-40\,{v_{{1}}}^{2}+40\,{v_{{2}}}%
^{2}+16,{v_{{1}}}^{8}+4 \,{v_{{1}}}^{6}{v_{{2}}}^{2}+6\,{v_{{1}}}^{4}{v_{{2}}%
}^{4}+4\,{v_{{1}} }^{2}{v_{{2}}}^{6}+{v_{{2}}}^{8}-10\,{v_{{1}}}%
^{6}-30\,{v_{{1}}}^{4}{v _{{2}}}^{2}-30\,{v_{{1}}}^{2}{v_{{2}}}^{4}%
-10\,{v_{{2}}}^{6}+33\,{v_{{ 1}}}^{4}+66\,{v_{{1}}}^{2}{v_{{2}}}%
^{2}+33\,{v_{{2}}}^{4}-40\,{v_{{1}} }^{2}-40\,{v_{{2}}}^{2}+16\rangle$

\medskip

\noindent Note that the generators of the two ideals look very different.
However, after computing the reduced Gr\"obner basis~\cite{Buch65, Buch06,
CLO97} of the generators with respect to the total degree order ($v_{1}\succ
v_{2}$), we obtain
\begin{align*}
\mathbf{R}  &  = \langle2\,{v_{1}}^{2}{v_{2}}^{2}-5\,{v_{2}}^{2},\;\; {v_{1}%
}^{4}+{v_{2}}^{4}-5\,{v_{1}}^{2}+4,\;\; 4\,{v_{2}}^{6}-9\,{v_{2}}^{2}\rangle\\
\mathbf{S}  &  = \langle2\,{v_{1}}^{2}{v_{2}}^{2}-5\,{v_{2}}^{2},\;\; {v_{1}%
}^{4}+{v_{2}}^{4}-5\,{v_{1}}^{2}+4,\;\; 4\,{v_{2}}^{6}-9\,{v_{2}}^{2}\rangle
\end{align*}
Thus we have $\mathbf{R}=\mathbf{S}$.
\end{example}

%=============================================================================================
%sketch
%=============================================================================================

\section{Sketchy Proof of Main Result}

\label{sketch}

The complete proof of the main result (Theorem~\ref{main}) is long and
technical. Hence, before we plunge into the complete proof (given in the next
section), we informally sketch the overall structure/underlying ideas of the
proof by using a small case $m=5,n=4$.

Note that\underline{}%
\[
\mathbf{R}=%
\begin{array}
[c]{cccc}%
\left(  \alpha_{1}+\beta_{1}\right)  & \left(  \alpha_{1}+\beta_{2}\right)  &
\left(  \alpha_{1}+\beta_{3}\right)  & \left(  \alpha_{1}+\beta_{4}\right) \\
\left(  \alpha_{2}+\beta_{1}\right)  & \left(  \alpha_{2}+\beta_{2}\right)  &
\left(  \alpha_{2}+\beta_{3}\right)  & \left(  \alpha_{2}+\beta_{4}\right) \\
\left(  \alpha_{3}+\beta_{1}\right)  & \left(  \alpha_{3}+\beta_{2}\right)  &
\left(  \alpha_{3}+\beta_{3}\right)  & \left(  \alpha_{3}+\beta_{4}\right) \\
\left(  \alpha_{4}+\beta_{1}\right)  & \left(  \alpha_{4}+\beta_{2}\right)  &
\left(  \alpha_{4}+\beta_{3}\right)  & \left(  \alpha_{4}+\beta_{4}\right) \\
\left(  \alpha_{5}+\beta_{1}\right)  & \left(  \alpha_{5}+\beta_{2}\right)  &
\left(  \alpha_{5}+\beta_{3}\right)  & \left(  \alpha_{5}+\beta_{4}\right)
\end{array}
\]%
\[
\mathbf{S}=\mathop{\rm per}%
\begin{bmatrix}
a_{0} & a_{1} & a_{2} & a_{3} & a_{4} & a_{5} &  &  & \\
& a_{0} & a_{1} & a_{2} & a_{3} & a_{4} & a_{5} &  & \\
&  & a_{0} & a_{1} & a_{2} & a_{3} & a_{4} & a_{5} & \\
&  &  & a_{0} & a_{1} & a_{2} & a_{3} & a_{4} & a_{5}\\
b_{0} & b_{1} & b_{2} & b_{3} & b_{4} &  &  &  & \\
& b_{0} & b_{1} & b_{2} & b_{3} & b_{4} &  &  & \\
&  & b_{0} & b_{1} & b_{2} & b_{3} & b_{4} &  & \\
&  &  & b_{0} & b_{1} & b_{2} & b_{3} & b_{4} & \\
&  &  &  & b_{0} & b_{1} & b_{2} & b_{3} & b_{4}%
\end{bmatrix}
\]
Recall that the main theorem (Theorem~\ref{main}) states that $\mathbf{R=S}$,
that is, a term appears in $\mathbf{R}$ if and only if it appears in
$\mathbf{S}$. The overall strategy for the proof is to divide the task into
showing the following four claims:

\begin{enumerate}
\itemsep=0em

\item A term occurs in $\mathbf{R}$ iff it has a representation\ in terms of a
certain boolean matrix, which we call ``res''-representation.

\item A term occurs in $\mathbf{S}$ iff it has a representation in terms of a
certain pair of boolean matrices, which we call ``syl''-representation.

\item If a term has a res-representation then it has a syl-representation.

\item If a term has a syl-representation then it has a res-representation.
\end{enumerate}

The main result (Theorem~\ref{main}) immediately follows from the above four
claims. In the following four subsections, we informally explain/justify the
claims one by one.

\subsection{A term occurs in $\mathbf{R}$ iff it has a res-representation.}

By expanding $\mathbf{R,}$ we obtain%
\[
\mathbf{R}=\cdots+\alpha_{1}^{3}\alpha_{2}^{3}\alpha_{3}^{3}\alpha_{4}%
^{2}\alpha_{5}^{2}\beta_{1}^{2}\beta_{2}^{2}\beta_{3}^{2}\beta_{4}^{1}+\cdots
\]
The particular term above can be obtained by making the underlined choice
while expanding $\mathbf{R.}$%
\[%
\begin{array}
[c]{cccc}%
\left(  \underline{\alpha_{1}}+\beta_{1}\right)  & \left(  \alpha
_{1}+\underline{\beta_{2}}\right)  & \left(  \underline{\alpha_{1}}+\beta
_{3}\right)  & \left(  \underline{\alpha_{1}}+\beta_{4}\right) \\
\left(  \underline{\alpha_{2}}+\beta_{1}\right)  & \left(  \underline
{\alpha_{2}}+\beta_{2}\right)  & \left(  \alpha_{2}+\underline{\beta_{3}%
}\right)  & \left(  \underline{\alpha_{2}}+\beta_{4}\right) \\
\left(  \underline{\alpha_{3}}+\beta_{1}\right)  & \left(  \underline
{\alpha_{3}}+\beta_{2}\right)  & \left(  \alpha_{3}+\underline{\beta_{3}%
}\right)  & \left(  \underline{\alpha_{3}}+\beta_{4}\right) \\
\left(  \alpha_{4}+\underline{\beta_{1}}\right)  & \left(  \alpha
_{4}+\underline{\beta_{2}}\right)  & \left(  \underline{\alpha_{4}}+\beta_{3}
\right)  & \left(  \underline{\alpha_{4}}+\beta_{4}\right) \\
\left(  \alpha_{5}+\underline{\beta_{1}}\right)  & \left(  \underline
{\alpha_{5}}+\beta_{2}\right)  & \left(  \underline{\alpha_{5}}+\beta
_{3}\right)  & \left(  \alpha_{5}+\underline{\beta_{4}}\right)
\end{array}
\]
It is convenient to represent the choice with the following boolean matrix%
\[
\mathcal{R=}\left[
\begin{array}
[c]{cccc}%
1 & 0 & 1 & 1\\
1 & 1 & 0 & 1\\
1 & 1 & 0 & 1\\
0 & 0 & 1 & 1\\
0 & 1 & 1 & 0
\end{array}
\right]
\]
where $1$ in the $i$-th row means that $\alpha_{i}$ is chosen and $0$ in the
$j$-th column means that $\beta_{j}$ is chosen. Then obviously the particular
term above can be written as%
\[
\alpha_{1}^{3}\alpha_{2}^{3}\alpha_{3}^{3}\alpha_{4}^{2}\alpha_{5}^{2}%
\beta_{1}^{2}\beta_{2}^{2}\beta_{3}^{2}\beta_{4}^{1}=\alpha^{rs\left(
\mathcal{R}\right)  }\beta^{cs\left(  \mathcal{\bar{R}}\right)  }%
\]
where $\bar{\mathcal{R}}$ stands for boolean complement of $\mathcal{R}$, $rs$
for row sum and $cs$ for column sum. Considering all other terms in the same
manner, we have%
\[
\mathbf{R}=\sum_{\mathcal{R\in}\left\{  0,1\right\}  ^{5\times4}}%
\alpha^{rs\left(  \mathcal{R}\right)  }\beta^{cs\left(  \mathcal{\bar{R}%
}\right)  }%
\]
Let us call the matrix $\mathcal{R}$ a ``res''-representation of the
corresponding term. In summary, we observe that a term occurs in $\mathbf{R}$
iff it has a res-representation.

\subsection{A term occurs in $\mathbf{S}$ iff it has a syl-representation.}

\bigskip By expanding $\mathbf{S,}$ we obtain%
\[
\mathbf{S}=\cdots+\alpha_{1}^{3}\alpha_{2}^{3}\alpha_{3}^{3}\alpha_{4}%
^{2}\alpha_{5}^{2}\beta_{1}^{2}\beta_{2}^{2}\beta_{3}^{2}\beta_{4}^{1}+\cdots
\]
The particular term above can be obtained by making the following choice
(permutation path) while expanding $\mathbf{S.}$%
\begin{equation}
\label{path1}%
\begin{array}
[c]{|c|c|c|c|c|c|c|c|c|}\hline
& \alpha_{3} &  &  &  &  &  &  & \\\hline
&  &  &  &  & \alpha_{1}\alpha_{2}\alpha_{3}\alpha_{5} &  &  & \\\hline
&  &  &  &  &  &  & \alpha_{1}\alpha_{2}\alpha_{3}\alpha_{4}\alpha_{5} &
\\\hline
&  &  &  &  &  & \alpha_{1}\alpha_{2}\alpha_{4} &  & \\\hline
1 &  &  &  &  &  &  &  & \\\hline
&  & \beta_{2} &  &  &  &  &  & \\\hline
&  &  &  & \beta_{1}\beta_{3} &  &  &  & \\\hline
&  &  & 1 &  &  &  &  & \\\hline
&  &  &  &  &  &  &  & \beta_{1}\beta_{2}\beta_{3}\beta_{4}\\\hline
\end{array}
\tag{*}%
\end{equation}
It is convenient to represent the choice with the following boolean matrices%
\[
\mathcal{S}_{1}=\left[
\begin{array}
[c]{cccc}%
0 & 1 & 1 & 1\\
0 & 1 & 1 & 1\\
1 & 1 & 1 & 0\\
0 & 0 & 1 & 1\\
0 & 1 & 1 & 0
\end{array}
\right]  ,\ \ \mathcal{S}_{2}=\left[
\begin{array}
[c]{cccc}%
0 & 0 & 0 & 0\\
0 & 1 & 0 & 0\\
1 & 0 & 1 & 0\\
0 & 0 & 0 & 0\\
1 & 1 & 1 & 1
\end{array}
\right]
\]
where the $j$-th column of $\mathcal{S}_{1}$ encodes the $\alpha$ term in the
$j$-th row of the top $4 \times9$ submatrix of the above matrix~\eqref{path1}
and the $i$-th row of $\mathcal{S}_{2}$ encodes the $\beta$ term in the $i$-th
row of the bottom $5 \times9$ submatrix of the above matrix~\eqref{path1}.
Then the particular term above can be written as
\[
\alpha_{1}^{3}\alpha_{2}^{3}\alpha_{3}^{3}\alpha_{4}^{2}\alpha_{5}^{2}%
\beta_{1}^{2}\beta_{2}^{2}\beta_{3}^{2}\beta_{4}^{1}=\alpha^{rs\left(
\mathcal{S}_{1}\right)  }\beta^{cs\left(  \mathcal{S}_{2}\right)  }%
\]
Let $c=cs\left(  \mathcal{S}_{1}\right)  $ and $r=rs\left(  \mathcal{S}%
_{2}\right)  .$ Then $c=\left(  1,4,5,3\right)  $ and $r=\left(
0,1,2,0,4\right)  .$ Observe%
\begin{align*}
&  c_{1}+1,c_{2}+2,c_{3}+3,c_{4}+4,r_{1}+1,r_{2}+2,r_{3}+3,r_{4}+4,r_{5}+5\\
&  =2,6,8,7,1,3,5,4,9
\end{align*}
which is exactly the permutation path (the choice of columns) yielding the
particular term. This motivates the following short-hand notations:
\begin{align*}
acs(\mathcal{S}_{1})  &  = (c_{1}+1,c_{2}+2,c_{3}+3,c_{4}+4)\\
ars(\mathcal{S}_{2})  &  = (r_{1}+1,r_{2}+2,r_{3}+3,r_{4}+4,r_{5}+5)
\end{align*}
Using these notations, we can restate the above observation (property) as
\[
\{c^{\prime}_{1},\ldots,c^{\prime}_{4},r^{\prime}_{1},\ldots,r^{\prime}_{5}\}
= \{1,\ldots,4+5\}
\]
where $c^{\prime}=acs(\mathcal{S}_{1})$ and $r^{\prime}=arc(S_{2})$. We will
denote this property by $PC\left(  \mathcal{S}_{1},\mathcal{S}_{2}\right)  $.
Considering all other terms in the same manner, we have%
\[
\mathbf{R}=\sum_{\substack{\mathcal{S}_{1},\mathcal{S}_{2}\mathcal{\in
}\left\{  0,1\right\}  ^{5\times4}\\PC\left(  \mathcal{S}_{1},\mathcal{S}%
_{2}\right)  }}\alpha^{rs\left(  \mathcal{S}_{1}\right)  }\beta^{cs\left(
\mathcal{S}_{2}\right)  }%
\]
Let us call the pair of the matrices ($\mathcal{S}_{1}$, $\mathcal{S}_{2}$) a
``syl''-representation of the corresponding term. In summary, we observe that
a term occurs in $\mathbf{S}$ iff it has a syl-representation.

\subsection{If a term has a res-representation then it has a
syl-representation.}

Consider the term $t=\alpha_{1}^{3}\alpha_{2}^{3}\alpha_{3}^{3}\alpha_{4}%
^{2}\alpha_{5}^{2}\beta_{1}^{2}\beta_{2}^{2}\beta_{3}^{2}\beta_{4}^{1}$ in
$\mathbf{R}$ obtained by the following choice
\[%
\begin{array}
[c]{cccc}%
\left(  \underline{\alpha_{1}}+\beta_{1}\right)  & \left(  \alpha
_{1}+\underline{\beta_{2}}\right)  & \left(  \underline{\alpha_{1}}+\beta
_{3}\right)  & \left(  \underline{\alpha_{1}}+\beta_{4}\right) \\
\left(  \underline{\alpha_{2}}+\beta_{1}\right)  & \left(  \underline
{\alpha_{2}}+\beta_{2}\right)  & \left(  \alpha_{2}+\underline{\beta_{3}%
}\right)  & \left(  \underline{\alpha_{2}}+\beta_{4}\right) \\
\left(  \underline{\alpha_{3}}+\beta_{1}\right)  & \left(  \underline
{\alpha_{3}}+\beta_{2}\right)  & \left(  \alpha_{3}+\underline{\beta_{3}%
}\right)  & \left(  \underline{\alpha_{3}}+\beta_{4}\right) \\
\left(  \alpha_{4}+\underline{\beta_{1}}\right)  & \left(  \alpha
_{4}+\underline{\beta_{2}}\right)  & \left(  \underline{\alpha_{4}}+\beta
_{3}\right)  & \left(  \underline{\alpha_{4}}+\beta_{4}\right) \\
\left(  \alpha_{5}+\underline{\beta_{1}}\right)  & \left(  \underline
{\alpha_{5}}+\beta_{2}\right)  & \left(  \underline{\alpha_{5}}+\beta
_{3}\right)  & \left(  \alpha_{5}+\underline{\beta_{4}}\right)
\end{array}
\]
which is represented by the following res-presentation%
\[
\mathcal{R=}\left[
\begin{array}
[c]{cccc}%
1 & 0 & 1 & 1\\
1 & 1 & 0 & 1\\
1 & 1 & 0 & 1\\
0 & 0 & 1 & 1\\
0 & 1 & 1 & 0
\end{array}
\right]
\]
We construct the following two boolean matrices%
\[
\mathcal{S}_{1}=\left[
\begin{array}
[c]{cccc}%
1 & 0 & 1 & 1\\
1 & 1 & 0 & 1\\
1 & 1 & 0 & 1\\
0 & 0 & 1 & 1\\
0 & 1 & 1 & 0
\end{array}
\right]  ,\ \ \mathcal{S}_{2}=\left[
\begin{array}
[c]{cccc}%
0 & 0 & 0 & 0\\
0 & 0 & 0 & 0\\
0 & 0 & 0 & 0\\
1 & 1 & 1 & 0\\
1 & 1 & 1 & 1
\end{array}
\right]
\]
where $\mathcal{S}_{1}=\mathcal{R}$, $cs\left(  \mathcal{S}_{2}\right)
=cs\left(  \mathcal{\bar{R}}\right)  $ and 1's in $\mathcal{S}_{2}$ are
\textquotedblleft flushed'' to the \ bottom along columns. Observe that
$(\mathcal{S}_{1}, \mathcal{S}_{2})$ represents the following choice yielding
the above term $t$, that is, it is a syl-representation of the term $t$.
\[%
\begin{array}
[c]{|c|c|c|c|c|c|c|c|c|}\hline
&  &  & \alpha_{1}\alpha_{2}\alpha_{3} &  &  &  &  & \\\hline
&  &  &  & \alpha_{2}\alpha_{3}\alpha_{5} &  &  &  & \\\hline
&  &  &  &  & \alpha_{1}\alpha_{4}\alpha_{5} &  &  & \\\hline
&  &  &  &  &  &  & \alpha_{1}\alpha_{2}\alpha_{3}\alpha_{4} & \\\hline
1 &  &  &  &  &  &  &  & \\\hline
& 1 &  &  &  &  &  &  & \\\hline
&  & 1 &  &  &  &  &  & \\\hline
&  &  &  &  &  & \beta_{1}\beta_{2}\beta_{3} &  & \\\hline
&  &  &  &  &  &  &  & \beta_{1}\beta_{2}\beta_{3}\beta_{4}\\\hline
\end{array}
\]
In summary, if a term has a res-representation then it has a syl-representation.

\subsection{If a term has a syl-representation then it has a
res-representation.}

Consider the term $t=\alpha_{1}^{3}\alpha_{2}^{3}\alpha_{3}^{3}\alpha_{4}%
^{2}\alpha_{5}^{2}\beta_{1}^{2}\beta_{2}^{2}\beta_{3}^{2}\beta_{4}^{1}$ in
$\mathbf{S}$ obtained by the following choice%
\[%
\begin{array}
[c]{|c|c|c|c|c|c|c|c|c|}\hline
& \alpha_{3} &  &  &  &  &  &  & \\\hline
&  &  &  &  & \alpha_{1}\alpha_{2}\alpha_{3}\alpha_{5} &  &  & \\\hline
&  &  &  &  &  &  & \alpha_{1}\alpha_{2}\alpha_{3}\alpha_{4}\alpha_{5} &
\\\hline
&  &  &  &  &  & \alpha_{1}\alpha_{2}\alpha_{4} &  & \\\hline
1 &  &  &  &  &  &  &  & \\\hline
&  & \beta_{2} &  &  &  &  &  & \\\hline
&  &  &  & \beta_{1}\beta_{3} &  &  &  & \\\hline
&  &  & 1 &  &  &  &  & \\\hline
&  &  &  &  &  &  &  & \beta_{1}\beta_{2}\beta_{3}\beta_{4}\\\hline
\end{array}
\]
which is represented by the following syl-representation%

\[
\mathcal{S}_{1}=\left[
\begin{array}
[c]{cccc}%
0 & 1 & 1 & 1\\
0 & 1 & 1 & 1\\
1 & 1 & 1 & 0\\
0 & 0 & 1 & 1\\
0 & 1 & 1 & 0
\end{array}
\right]  ,\ \ \mathcal{S}_{2}=\left[
\begin{array}
[c]{cccc}%
0 & 0 & 0 & 0\\
0 & 1 & 0 & 0\\
1 & 0 & 1 & 0\\
0 & 0 & 0 & 0\\
1 & 1 & 1 & 1
\end{array}
\right]
\]
We repeatedly transform the syl-representations of the term $t$ as follows.

\begin{enumerate}
\itemsep=1em

\item We make $acs(\mathcal{S}_{1})$ and $ars(\mathcal{S}_{2})$ increasing.

By swapping

\begin{itemize}
\item $\mathcal{S}_{1,3,3}$ and $\mathcal{S}_{1,3,4}$

\item $\mathcal{S}_{2,3,1}$ and $\mathcal{S}_{2,4,1}$
\end{itemize}

of the above syl-representation, we obtain another syl-representation
\[
\mathcal{S}_{1}=\left[
\begin{array}
[c]{cccc}%
0 & 1 & 1 & 1\\
0 & 1 & 1 & 1\\
1 & 1 & 0 & 1\\
0 & 0 & 1 & 1\\
0 & 1 & 1 & 0
\end{array}
\right]  ,\ \ \mathcal{S}_{2}= \left[
\begin{array}
[c]{cccc}%
0 & 0 & 0 & 0\\
0 & 1 & 0 & 0\\
0 & 0 & 1 & 0\\
1 & 0 & 0 & 0\\
1 & 1 & 1 & 1
\end{array}
\right]
\]
where $\mathcal{S}_{k,i,j}$ denotes the $(i,j)$ entry of the matrix
$\mathcal{S}_{k}$. It represents the following choice.
\[
\begin{array}
[c]{|c|c|c|c|c|c|c|c|c|}\hline
& \alpha_{3} &  &  &  &  &  &  & \\\hline
&  &  &  &  & \alpha_{1}\alpha_{2}\alpha_{3}\alpha_{5} &  &  & \\\hline
&  &  &  &  &  & \alpha_{1}\alpha_{2}\alpha_{4}\alpha_{5} &  & \\\hline
&  &  &  &  &  &  & \alpha_{1}\alpha_{2}\alpha_{3}\alpha_{4} & \\\hline
1 &  &  &  &  &  &  &  & \\\hline
&  & \beta_{2} &  &  &  &  &  & \\\hline
&  &  & \beta_{3} &  &  &  &  & \\\hline
&  &  &  & \beta_{1} &  &  &  & \\\hline
&  &  &  &  &  &  &  & \beta_{1}\beta_{2}\beta_{3}\beta_{4}\\\hline
\end{array}
\]

Note that the column index of the top/bottom part of the matrix is increasing
as the row index is increasing.

\item We make $1$'s in $\mathcal{S}_{2}$ ``flushed to bottom''.

By swapping

\begin{itemize}
\item $\mathcal{S}_{2,2,2}$ and $\mathcal{S}_{2,3,2}$

\item $\mathcal{S}_{2,3,2}$ and $\mathcal{S}_{2,4,2}$

\item $\mathcal{S}_{1,1,1}$ and $\mathcal{S}_{1,1,2}$

\item $\mathcal{S}_{2,3,3}$ and $\mathcal{S}_{2,4,3}$

\item $\mathcal{S}_{1,2,1}$ and $\mathcal{S}_{1,2,3}$
\end{itemize}

we obtain still another syl-representation
\[
\mathcal{S}_{1}=\left[
\begin{array}
[c]{cccc}%
1 & 0 & 1 & 1\\
1 & 1 & 0 & 1\\
1 & 1 & 0 & 1\\
0 & 0 & 1 & 1\\
0 & 1 & 1 & 0
\end{array}
\right]  ,\ \ \mathcal{S}_{2}=\ \ \left[
\begin{array}
[c]{cccc}%
0 & 0 & 0 & 0\\
0 & 0 & 0 & 0\\
0 & 0 & 0 & 0\\
1 & 1 & 1 & 0\\
1 & 1 & 1 & 1
\end{array}
\right]
\]
It represents the following choice.
\[
\begin{array}
[c]{|c|c|c|c|c|c|c|c|c|}\hline
&  &  & \alpha_{1}\alpha_{2}\alpha_{3} &  &  &  &  & \\\hline
&  &  &  & \alpha_{2}\alpha_{3}\alpha_{5} &  &  &  & \\\hline
&  &  &  &  & \alpha_{1}\alpha_{4}\alpha_{5} &  &  & \\\hline
&  &  &  &  &  &  & \alpha_{1}\alpha_{2}\alpha_{3}\alpha_{4} & \\\hline
1 &  &  &  &  &  &  &  & \\\hline
& 1 &  &  &  &  &  &  & \\\hline
&  & 1 &  &  &  &  &  & \\\hline
&  &  &  &  &  & \beta_{1}\beta_{2}\beta_{3} &  & \\\hline
&  &  &  &  &  &  &  & \beta_{1}\beta_{2}\beta_{3}\beta_{4}\\\hline
\end{array}
\]
Note that the entries in the bottom part of the matrix are products of the
form $\beta_{1}\cdots\beta_{k}$ (product of consecutive $\beta$'s starting
from $\beta_{1}$).
\end{enumerate}

\noindent Let $\mathcal{R=S}_{1}.$ Observe that $\mathcal{R}$ represents the
following choice yielding the given term $t$, that is, it is a
res-representation of the term $t$.%
\[%
\begin{array}
[c]{cccc}%
\left(  \underline{\alpha_{1}}+\beta_{1}\right)  & \left(  \alpha
_{1}+\underline{\beta_{2}}\right)  & \left(  \underline{\alpha_{1}}+\beta
_{3}\right)  & \left(  \underline{\alpha_{1}}+\beta_{4}\right) \\
\left(  \underline{\alpha_{2}}+\beta_{1}\right)  & \left(  \underline
{\alpha_{2}}+\beta_{2}\right)  & \left(  \alpha_{2}+\underline{\beta_{3}%
}\right)  & \left(  \underline{\alpha_{2}}+\beta_{4}\right) \\
\left(  \underline{\alpha_{3}}+\beta_{1}\right)  & \left(  \underline
{\alpha_{3}}+\beta_{2}\right)  & \left(  \alpha_{3}+\underline{\beta_{3}%
}\right)  & \left(  \underline{\alpha_{3}}+\beta_{4}\right) \\
\left(  \alpha_{4}+\underline{\beta_{1}}\right)  & \left(  \alpha
_{4}+\underline{\beta_{2}}\right)  & \left(  \underline{\alpha_{4}}+\beta_{3}
\right)  & \left(  \underline{\alpha_{4}}+\beta_{4}\right) \\
\left(  \alpha_{5}+\underline{\beta_{1}}\right)  & \left(  \underline
{\alpha_{5}}+\beta_{2}\right)  & \left(  \underline{\alpha_{5}}+\beta
_{3}\right)  & \left(  \alpha_{5}+\underline{\beta_{4}}\right)
\end{array}
\]
In summary, if a term has a syl-representation then it has a res-representation.

%=============================================================================================
%proof
%=============================================================================================

\section{Detailed Proof of Main Result}

\label{proof} In this section, we provide a detailed proof for the main
result. We strongly recommend that the reader first go over the previous
section (Section~\ref{sketch}) where we provided an informal/intuitive sketch
of the overall structure/underlying ideas of the proof. It will greatly aid
the reader in following the detailed and technical proof given below.

\medskip

Recall that the main theorem (Theorem~\ref{main}) claims that $\mathbf{R=S}$,
that is, a term appears in $\mathbf{R}$ if and only if it appears in
$\mathbf{S}$. It follows immediately from the following four lemmas.

\begin{description}
[leftmargin=6em,style=nextline]

\item[\textsf{Lemma \ref{Delta_res}:}] A term occurs in $\mathbf{R}$ iff it
has a res-representation (a certain boolean matrix).

\item[\textsf{Lemma \ref{Pi_syl}:}] A term occurs in $\mathbf{S}$ iff it has a
syl-representation (a certain pair of boolean matrices).

\item[\textsf{Lemma \ref{SylFromRes}:}] If a term has a res-representation
then it has a syl-representation.

\item[\textsf{Lemma \ref{ResFromSyl}:}] If a term has a syl-representation
then it has a res-representation.
\end{description}

\noindent We will devote one subsection for each lemma. Each subsection ends
with the proof of each of the above lemmas. Lemmas~\ref{SylFromRes}
and~\ref{ResFromSyl} are proved constructively, by providing algorithms
(Algorithms~\ref{alg:SylFromRes} and~\ref{alg:ResFromSyl}) that produce one
representation from the other. These algorithms are based on a key lemma
(Lemma~\ref{lem:Flushed}) that establishes a crucial relationship between the
two representations (syl and res). Before stating and proving the above
lemmas, we introduce notations that will be used throughout this section.

\begin{notation}
\label{notations}Let $M\in\left\{  0,1\right\}  ^{m\times n}$.

\begin{enumerate}
\item The \emph{complement} of $M,$ written as $\bar{M},$ is the $m\times n$
matrix defined by%
\[
\bar{M}_{ij}=1-M_{ij}%
\]

\item The \emph{row sum} of $M,$ written as $rs\left(  M\right)  $, is the
$m$-dimensional vector defined by%
\[
rs\left(  M\right)  =\left(  \sum_{j=1}^{n}M_{ij}\right)  _{i=1,\ldots,m}%
\]

\item The \emph{column sum} of $M,$ written as $cs\left(  M\right)  $, is the
$n$-dimensional vector defined by%
\[
cs\left(  M\right)  =\left(  \sum_{i=1}^{m}M_{ij}\right)  _{j=1,\ldots,n}%
\]

\item The \emph{adjusted row sum} of $M,$ written as $ars\left(  M\right)  $,
is the $m$-dimensional vector defined by%
\[
ars\left(  M\right)  =\left(  i+\sum_{j=1}^{n}M_{ij}\right)  _{i=1,\ldots,m}%
\]

\item The \emph{adjusted column sum} of $M,$ written as $acs\left(  M\right)
$, is the $n$-dimensional vector defined by%
\[
acs\left(  M\right)  =\left(  j+\sum_{i=1}^{m}M_{ij}\right)  _{j=1,\ldots,n}%
\]

\end{enumerate}
\end{notation}

\begin{example}
Let%
\[
M=\left[
\begin{array}
[c]{ccc}%
1 & 1 & 1\\
1 & 0 & 1\\
0 & 1 & 1\\
0 & 1 & 0
\end{array}
\right]
\]
Then%
\begin{align*}
rs\left(  M\right)   &  =\left(  3,2,2,1\right) \\
cs\left(  M\right)   &  =\left(  2,3,3\right) \\
ars\left(  M\right)   &  =\left(  4,4,5,5\right) \\
acs\left(  M\right)   &  =\left(  3,5,6\right)
\end{align*}

\end{example}

\subsection{A term occurs in $\mathbf{R}$ iff it has a res-representation.}

\begin{definition}
[Res-representation]\label{res-rep}Let $t=\alpha^{\mu}\beta^{\nu}\!$ be a
term, where $\mu\in\mathbb{Z}^{m}_{{}\ge0} \, \mbox{and} \hskip 4pt \nu
\in\mathbb{Z}_{{}\ge0}^{n}$. Let $\mathcal{R}\in\left\{  0,1\right\}
^{m\times n}\!$. We say that $\mathcal{R}$ is a res-representation of $t$ if

\begin{itemize}
\item $rs\left(  \mathcal{R}\right)  =\mu.$

\item $cs\left(  \bar{\mathcal{R}}\right)  =\nu.$ Equivalently $cs\left(
\mathcal{R}\right)  =\bar{\nu}$ where $\bar{\nu}_{j}=m-\nu_{j}.$
\end{itemize}
\end{definition}

\begin{example}
Let $m=3$ and $n=2.$ Let $t=\alpha_{1}^{2}\alpha_{2}^{1}\alpha_{3}^{1}%
\beta_{1}^{1}\beta_{2}^{1}=\alpha^{\left(  2,1,1\right)  }\beta^{\left(
1,1\right)  }.$ Let
\[
\mathcal{R}=\left[
\begin{array}
[c]{cc}%
1 & 1\\
1 & 0\\
0 & 1
\end{array}
\right]
\]
We would like to know whether $\mathcal{R}$ is a res-representation of $t.$
Note
\[
\mathcal{R}=\left[
\begin{array}
[c]{cc||cc}%
1 & 1 &  & 2\\
1 & 0 &  & 1\\
0 & 1 &  & 1\\\hline\hline
&  &  & \mu_{i}\\
2 & 2 & \bar{\nu}_{j} & \\
1 & 1 & \nu_{j} &
\end{array}
\right]
\]
Hence $\mathcal{R}$ is a res-representation of $t.$ Likewise, the following
matrix is also a res-representation of $t.$%
\[
\mathcal{R}=\left[
\begin{array}
[c]{cc}%
1 & 1\\
0 & 1\\
1 & 0
\end{array}
\right]
\]

\end{example}

\begin{lemma}
\label{Delta_res}A term occurs in $\mathbf{R}$ iff it has a res-representation.
\end{lemma}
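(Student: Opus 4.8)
The plan is to expand $\mathbf{R}=\prod_{i,j}(\alpha_i+\beta_j)$ using distributivity and identify each summand with a choice function. Distributing the product over the $mn$ binomials, each resulting term corresponds to choosing, for every pair $(i,j)$, either $\alpha_i$ or $\beta_j$. First I would encode such a choice by the matrix $\mathcal{R}\in\{0,1\}^{m\times n}$ whose $(i,j)$ entry is $1$ when $\alpha_i$ is picked from the factor $(\alpha_i+\beta_j)$ and $0$ when $\beta_j$ is picked. Under this encoding, the number of times $\alpha_i$ is chosen across all $j$ is exactly $\sum_{j=1}^n\mathcal{R}_{ij}=rs(\mathcal{R})_i$, and the number of times $\beta_j$ is chosen across all $i$ is $\sum_{i=1}^m(1-\mathcal{R}_{ij})=cs(\bar{\mathcal{R}})_j$. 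Hence the term produced by the choice $\mathcal{R}$ is precisely $\alpha^{rs(\mathcal{R})}\beta^{cs(\bar{\mathcal{R}})}$.

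Next I would invoke the generalized distributive law in the CIS $\mathcal{I}$. Since $+$ and $\times$ satisfy commutativity, associativity, and distributivity (Definition~\ref{cisr}), the product $\prod_{1\le i\le m,\,1\le j\le n}(\alpha_i+\beta_j)$ equals the sum, over all choice functions, of the corresponding products; this is a routine induction on $mn$ using distributivity and does not require additive inverses. Combined with the previous paragraph, this gives
\[
\mathbf{R}=\sum_{\mathcal{R}\in\{0,1\}^{m\times n}}\alpha^{rs(\mathcal{R})}\beta^{cs(\bar{\mathcal{R}})}.
\]
Therefore a term $t=\alpha^\mu\beta^\nu$ occurs in this sum (as one of the summands) if and only if there is some $\mathcal{R}$ with $rs(\mathcal{R})=\mu$ and $cs(\bar{\mathcal{R}})=\nu$, which by Definition~\ref{res-rep} is exactly the assertion that $t$ has a res-representation.

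There is one subtlety to pin down, and it is the only place where idempotency and the additive monoid structure matter: I must make precise what "a term occurs in $\mathbf{R}$" means as a formal statement over a CIS, since $\mathcal{I}$ need not be a polynomial-like structure and the $\alpha_i,\beta_j$ are indeterminates. The cleanest route is to work in the free CIS on the generators $\alpha_1,\dots,\alpha_m,\beta_1,\dots,\beta_n$ (equivalently, the semiring of finite sets of monomials under union and the evident product), where a "term" is a monomial $\alpha^\mu\beta^\nu$ and "occurs in" means "appears in the canonical set-of-monomials normal form." In that setting the displayed identity for $\mathbf{R}$ is an identity of finite sets of monomials — additive idempotency is exactly what collapses repeated summands to set membership — and the iff follows immediately. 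The main (very mild) obstacle is thus not the combinatorics but stating this normal-form framework carefully enough that "occurs in $\mathbf{R}$" and "occurs in $\mathbf{S}$" are unambiguous; once that is fixed, the lemma is a direct unwinding of the distributive expansion.
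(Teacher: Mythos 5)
Your proposal is correct and follows essentially the same route as the paper: expand $\prod_{i,j}(\alpha_i+\beta_j)$ by distributivity, encode each choice function as a boolean matrix $\mathcal{R}$, and observe that the resulting summand is $\alpha^{rs(\mathcal{R})}\beta^{cs(\bar{\mathcal{R}})}$, so that $\mathbf{R}=\sum_{\mathcal{R}\in\{0,1\}^{m\times n}}\alpha^{rs(\mathcal{R})}\beta^{cs(\bar{\mathcal{R}})}$. Your closing remark about fixing a normal-form semantics for ``a term occurs in $\mathbf{R}$'' is a reasonable clarification that the paper leaves implicit, but it does not change the argument.
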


\begin{proof}
Recall
\[
\mathbf{R}=\prod_{i=1}^{m}\prod_{j=1}^{n}\left(  \alpha_{i}+\beta_{j}\right)
\]
Note%
\begin{align*}
\mathbf{R}  &  =\prod_{i=1}^{m}\prod_{j=1}^{n}\sum_{e\in\left\{  0,1\right\}
}\alpha_{i}^{e}\beta_{j}^{\bar{e}}\ \ \ \ \ \ \ \ \text{where }\bar{e}=1-e\\
&  =\sum_{\mathcal{R}\in\left\{  0,1\right\}  ^{m\times n}}\prod_{i=1}%
^{m}\prod_{j=1}^{n}\alpha_{i}^{\mathcal{R}_{ij}}\beta_{j}^{\bar{\mathcal{R}%
}_{ij}}\\
&  =\sum_{\mathcal{R}\in\left\{  0,1\right\}  ^{m\times n}}\prod_{i=1}%
^{m}\prod_{j=1}^{n}\alpha_{i}^{\mathcal{R}_{ij}}\ \prod_{i=1}^{m}\prod
_{j=1}^{n}\beta_{j}^{\bar{\mathcal{R}}_{ij}}\\
&  =\sum_{\mathcal{R}\in\left\{  0,1\right\}  ^{m\times n}}\prod_{i=1}%
^{m}\alpha_{i}^{\sum_{j=1}^{n}\mathcal{R}_{ij}}\prod_{j=1}^{n}\beta_{j}%
^{\sum_{i=1}^{m}\bar{\mathcal{R}}_{ij}}\\
&  =\sum_{\mathcal{R}\in\left\{  0,1\right\}  ^{m\times n}}\alpha^{\mu}%
\beta^{\nu}\ \text{where }\mu=rs\left(  \mathcal{R}\right)  \ \text{and }%
\nu=cs\left(  \bar{\mathcal{R}}\right)
\end{align*}
The claim follows immediately.
\end{proof}

\subsection{A term occurs in $\mathbf{S}$ iff it has a syl-representation.}

\begin{definition}
[Properly coupled]Let $\mathcal{S}_{1}, \mathcal{S}_{2}\in\left\{
0,1\right\}  ^{m\times n}\!$. We say that $\mathcal{S}_{1}$ and $\mathcal{S}%
_{2}$ are properly coupled and write as $PC\left(  \mathcal{S}_{1}%
,\mathcal{S}_{2}\right)  $ iff
\[
\left\{  c_{1}^{\prime},\ldots,c_{n}^{\prime},r_{1}^{\prime},\ldots
,r_{m}^{\prime}\right\}  =\left\{  1,\ldots,m+n\right\}
\]
where $c^{\prime}=acs\left(  \mathcal{S}_{1}\right)  $ and $r^{\prime
}=ars\left(  \mathcal{S}_{2}\right)  .$
\end{definition}

\begin{example}
Let
\[%
\begin{array}
[t]{cc}%
\mathcal{S}_{1}=\left[
\begin{array}
[c]{cc}%
1 & 1\\
0 & 1\\
0 & 1
\end{array}
\right]  & \mathcal{S}_{2}=\left[
\begin{array}
[c]{cc}%
0 & 0\\
1 & 0\\
0 & 1
\end{array}
\right]
\end{array}
\]
Since%
\begin{align*}
c^{\prime}  &  =acs\left(  \mathcal{S}_{1}\right)  =\left(  2,5\right) \\
r^{\prime}  &  =ars\left(  \mathcal{S}_{2}\right)  =\left(  1,3,4\right)
\end{align*}
we have%
\[
\left\{  2,5,1,3,4\right\}  =\left\{  1,2,3,4,5\right\}
\]
Hence we have $PC(\mathcal{S}_{1},\mathcal{S}_{2})$.
\end{example}

\begin{definition}
[Syl-representation]\label{syl-rep}Let $t=\alpha^{\mu}\beta^{\nu}$ be a term.
Let $\mathcal{S}_{1},\mathcal{S}_{2}\in\left\{  0,1\right\}  ^{m\times n}$. We
say that $\left(  \mathcal{S}_{1},\mathcal{S}_{2}\right)  $ is a
syl-representation of $t$ if

\begin{itemize}
\item $rs\left(  \mathcal{S}_{1}\right)  =\mu$

\item $cs\left(  \mathcal{S}_{2}\right)  =\nu$

\item $PC\left(  \mathcal{S}_{1},\mathcal{S}_{2}\right)  \ $
\end{itemize}
\end{definition}

\begin{example}
Let $m=3$ and $n=2.$ Let $t=\alpha_{1}^{2}\alpha_{2}^{1}\alpha_{3}^{1}%
\beta_{1}^{1}\beta_{2}^{1}=\alpha^{\left(  2,1,1\right)  }\beta^{\left(
1,1\right)  }.$ Let%
\[%
\begin{array}
[t]{cc}%
\mathcal{S}_{1}=\left[
\begin{array}
[c]{cc}%
1 & 1\\
0 & 1\\
0 & 1
\end{array}
\right]  & \mathcal{S}_{2}=\left[
\begin{array}
[c]{cc}%
0 & 0\\
1 & 0\\
0 & 1
\end{array}
\right]
\end{array}
\]
We would like to know whether $\left(  \mathcal{S}_{1},\mathcal{S}_{2}\right)
$ is a syl-representation of $t.$ Note%
\[%
\begin{array}
[t]{cc}%
\mathcal{S}_{1}=\left[
\begin{array}
[c]{cc||cccc}%
1 & 1 &  & 2 &  & \\
0 & 1 &  & 1 &  & \\
0 & 1 &  & 1 &  & \\\hline\hline
&  &  & \mu_{i} &  & \\
1 & 3 & c_{j} &  &  & \\
1 & 2 & j &  &  & \\
2 & 5 & c_{j}+j &  &  &
\end{array}
\right]  & \mathcal{S}_{2}=\left[
\begin{array}
[c]{cc||cccc}%
0 & 0 &  & 0 & 1 & 1\\
1 & 0 &  & 1 & 2 & 3\\
0 & 1 &  & 1 & 3 & 4\\\hline\hline
&  &  & r_{i} & i & r_{i}+i\\
1 & 1 & \nu_{j} &  &  & \\
&  &  &  &  & \\
&  &  &  &  &
\end{array}
\right]
\end{array}
\]
Hence $\left(  \mathcal{S}_{1},\mathcal{S}_{2}\right)  $ is a
syl-representation of $t.$ Likewise, the following pairs of matrices are also
syl-representations of $t.$%
\begin{align*}
&
\begin{array}
[t]{cc}%
\mathcal{S}_{1}=\left[
\begin{array}
[c]{cc}%
1 & 1\\
0 & 1\\
0 & 1
\end{array}
\right]  & \mathcal{S}_{2}=\left[
\begin{array}
[c]{cc}%
0 & 0\\
0 & 1\\
1 & 0
\end{array}
\right]
\end{array}
\\
&
\begin{array}
[t]{cc}%
\mathcal{S}_{1}=\left[
\begin{array}
[c]{cc}%
1 & 1\\
0 & 1\\
0 & 1
\end{array}
\right]  & \mathcal{S}_{2}=\left[
\begin{array}
[c]{cc}%
0 & 0\\
1 & 1\\
0 & 0
\end{array}
\right]
\end{array}
\\
&
\begin{array}
[t]{cc}%
\mathcal{S}_{1}=\left[
\begin{array}
[c]{cc}%
1 & 1\\
1 & 0\\
0 & 1
\end{array}
\right]  & \mathcal{S}_{2}=\left[
\begin{array}
[c]{cc}%
0 & 0\\
0 & 0\\
1 & 1
\end{array}
\right]
\end{array}
\\
&
\begin{array}
[t]{cc}%
\mathcal{S}_{1}=\left[
\begin{array}
[c]{cc}%
1 & 1\\
0 & 1\\
1 & 0
\end{array}
\right]  & \mathcal{S}_{2}=\left[
\begin{array}
[c]{cc}%
0 & 0\\
0 & 0\\
1 & 1
\end{array}
\right]
\end{array}
\\
&
\begin{array}
[t]{cc}%
\mathcal{S}_{1}=\left[
\begin{array}
[c]{cc}%
1 & 1\\
1 & 0\\
1 & 0
\end{array}
\right]  & \mathcal{S}_{2}=\left[
\begin{array}
[c]{cc}%
0 & 0\\
0 & 0\\
1 & 1
\end{array}
\right]
\end{array}
\end{align*}

\end{example}

\begin{lemma}
\label{Pi_syl}A term occurs in $\mathbf{S}$ iff it has a syl-representation.
\end{lemma}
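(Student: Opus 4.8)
I want to show that a term $t = \alpha^\mu \beta^\nu$ occurs in $\mathbf{S} = \operatorname{per}(M)$ if and only if it has a syl-representation $(\mathcal{S}_1, \mathcal{S}_2)$. The natural approach mirrors the proof of Lemma~\ref{Delta_res}: expand the permanent of the Sylvester matrix $M$ into a sum over permutations, then expand each matrix entry (which is itself a sum of monomials in the $\alpha$'s and $\beta$'s) into its constituent terms, and finally read off a bijective correspondence between the monomials so produced and the pairs of boolean matrices satisfying the properly-coupled condition.

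\bigskip

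First I would recall the structure of $M$. It is an $(m+n) \times (m+n)$ matrix whose first $n$ rows are shifts of the coefficient vector $(a_0, \ldots, a_m)$ and whose last $m$ rows are shifts of $(b_0, \ldots, b_n)$; concretely, for $1 \le i \le n$ the entry in row $i$, column $k$ is $a_{k-i}$ (interpreted as $0$ outside the range $0 \le k-i \le m$), and for $1 \le i \le m$ the entry in row $n+i$, column $k$ is $b_{k-i}$. Now $\operatorname{per}(M) = \sum_{\sigma} \prod_{i} M_{i,\sigma(i)}$ where $\sigma$ ranges over permutations of $\{1, \ldots, m+n\}$ (no signs, since we take the permanent). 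For each of the first $n$ rows, say row $j$, the factor contributed is $a_{\sigma(j) - j}$, which forces $\sigma(j) - j \in \{0, \ldots, m\}$; and for each of the last $m$ rows, say row $n+i$, the factor is $b_{\sigma(n+i) - (n+i)}$, forcing $\sigma(n+i) - (n+i) \in \{0, \ldots, n\}$. If I set $c_j = \sigma(j) - j$ for $j = 1, \ldots, n$ and $r_i = \sigma(n+i) - (n+i)$ for $i = 1, \ldots, m$, then $\sigma$ being a permutation is exactly the statement that $\{c_1 + 1, \ldots, c_n + n, r_1 + (n+1), \ldots\}$ — wait, I must be careful about the index offset: $\sigma$ on the last block sends $n+i \mapsto (n+i) + r_i$, so the values are $\{c_j + j : j\} \cup \{(n+i) + r_i : i\}$, which must equal $\{1, \ldots, m+n\}$. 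This is precisely the condition $PC(\mathcal{S}_1, \mathcal{S}_2)$ once I identify $c = acs(\mathcal{S}_1)$ and $r + n\cdot\mathbf{1}$ with $ars(\mathcal{S}_2)$ shifted — so I will need to align conventions so that $ars$ as defined (which adds $i$, not $n+i$) matches; I expect this is handled by the fact that the $b$-block occupies rows $n+1, \ldots, n+m$ of $M$, so the offset works out, but verifying this index bookkeeping is exactly the fussy part.

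\bigskip

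Next, each $a_k$ is the $k$-th elementary-symmetric-like coefficient, $a_k = \sum_{|T| = k, T \subseteq \{1,\ldots,m\}} \prod_{\ell \in T} \alpha_\ell$, and similarly $b_k = \sum_{|U| = k} \prod_{\ell \in U} \beta_\ell$. So expanding the factor $a_{c_j}$ coming from column $j$ amounts to choosing a subset of $\{1, \ldots, m\}$ of size $c_j$ — and I record this choice as the $j$-th column of $\mathcal{S}_1$, a $\{0,1\}$-vector of weight $c_j$; hence $cs(\mathcal{S}_1) = c = acs(\mathcal{S}_1) - (1, \ldots, n)$, consistent with the definition of $acs$. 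Likewise expanding $b_{r_i}$ gives the $i$-th row of $\mathcal{S}_2$, of weight $r_i$, so $rs(\mathcal{S}_2) = r$ and $ars(\mathcal{S}_2) = r + (1, \ldots, m)$. The $\alpha$-content of the resulting monomial is $\prod_i \alpha_i^{(\text{number of columns of } \mathcal{S}_1 \text{ with a }1\text{ in row }i)} = \alpha^{rs(\mathcal{S}_1)}$, and the $\beta$-content is $\beta^{cs(\mathcal{S}_2)}$. Putting this together: summing over $\sigma$ and over the subset-choices, $\mathbf{S} = \sum \alpha^{rs(\mathcal{S}_1)} \beta^{cs(\mathcal{S}_2)}$ where the sum is over all pairs $(\mathcal{S}_1, \mathcal{S}_2) \in \{0,1\}^{m \times n} \times \{0,1\}^{m \times n}$ with $PC(\mathcal{S}_1, \mathcal{S}_2)$. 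By the definition of syl-representation (Definition~\ref{syl-rep}), a term $t = \alpha^\mu\beta^\nu$ appears in this sum if and only if some such pair has $rs(\mathcal{S}_1) = \mu$, $cs(\mathcal{S}_2) = \nu$, and $PC(\mathcal{S}_1,\mathcal{S}_2)$ — i.e.\ iff $t$ has a syl-representation. Since $\mathcal{I}$ is additively idempotent, multiplicities in this expansion are irrelevant, so ``appears in the sum'' literally means ``is one of the summands,'' completing the proof.

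\bigskip

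The main obstacle I anticipate is not conceptual but bookkeeping: correctly tracking the three index shifts (the column index $j$ added in $acs$, the row index $i$ added in $ars$, and the $+n$ offset separating the $a$-block from the $b$-block in $M$) so that the permutation condition on $\sigma$ translates cleanly into $PC(\mathcal{S}_1, \mathcal{S}_2)$ without an off-by-$n$ error. A clean way to organize this is to note that $\sigma$ restricted to rows $1,\ldots,n$ determines, and is determined by, the column-indices $\sigma(1), \ldots, \sigma(n)$, which form the set $\{acs(\mathcal{S}_1)_j : j\}$; the remaining columns, $\{1,\ldots,m+n\} \setminus \{acs(\mathcal{S}_1)_j\}$, must then be hit by rows $n+1, \ldots, n+m$ in increasing order of... no, in arbitrary order, but the value at row $n+i$ is $(n+i) + r_i$; so the condition is that $\{(n+i)+r_i : i=1,\ldots,m\}$ is exactly the complement. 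One then observes $(n+i) + r_i$ ranges over $n+1, \ldots$ naturally and the union-equals-$\{1,\ldots,m+n\}$ condition is symmetric in the two blocks, which is exactly how $PC$ is stated (with $ars$ using the within-block index $i$, the two definitions reconcile precisely because $n < (n+i)+r_i$ is automatic from $r_i \ge 0$ only after checking $acs(\mathcal S_1)_j \le m+n$, which follows from $c_j \le m$). I would spell this reconciliation out carefully as a short sub-argument, and then the rest is the routine expansion above.
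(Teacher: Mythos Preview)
Your approach is exactly the paper's: expand the permanent over permutations, split the product into the $a$-block (first $n$ rows) and the $b$-block (last $m$ rows), reindex, expand each elementary-symmetric coefficient as a sum over subsets, and package those subset choices as the columns of $\mathcal{S}_1$ and the rows of $\mathcal{S}_2$.

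The only issue is the bookkeeping slip you yourself flagged as the ``main obstacle.'' You correctly wrote that the entry in row $n+i$, column $k$ of $M$ is $b_{k-i}$, but then a few lines later set $r_i = \sigma(n+i) - (n+i)$ and spent the last paragraph trying to reconcile a phantom $+n$ offset. The correct reindexing (consistent with your own formula $b_{k-i}$) is $r_i = \sigma(n+i) - i$: then the $b$-subscript is exactly $r_i$, and $\sigma(n+i) = r_i + i$. The permutation condition becomes
\[
\{c_j + j : 1\le j\le n\}\ \cup\ \{r_i + i : 1\le i\le m\}\ =\ \{1,\ldots,m+n\},
\]
which is literally $PC(\mathcal{S}_1,\mathcal{S}_2)$ once $cs(\mathcal{S}_1)=c$ and $rs(\mathcal{S}_2)=r$. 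There is no $+n$ to absorb; your attempted reconciliation in the final paragraph is unnecessary and, as written, does not work (you cannot make $r_i + i$ equal $r_i + n + i$). With this one-line fix the argument is complete and identical to the paper's.
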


\begin{proof}
Let $M$ be the Sylvester matrix of $f$ and $g$ (see Definition~\ref{sylmat}).
Then
\begin{align*}
\mathbf{S}  &  =\mathop{\rm per}\left(  M\right) \\
&  =\sum_{\left\{  \sigma_{1},\ldots,\sigma_{n+m}\right\}  =\left\{
1,\ldots,n+m\right\}  }\prod_{i=1}^{n+m}M_{i\sigma_{i}}\\
&  =\sum_{\left\{  \sigma_{1},\ldots,\sigma_{n},\sigma_{n+1},\ldots
,\sigma_{n+m}\right\}  =\left\{  1,\ldots,n+m\right\}  }\left(  \prod
_{j=1}^{n}M_{j\sigma_{j}}\right)  \left(  \prod_{i=1}^{m}M_{\left(
n+i\right)  \sigma_{n+i}}\right) \\
&  =\sum_{\left\{  \sigma_{1},\ldots,\sigma_{n},\sigma_{n+1},\ldots
,\sigma_{n+m}\right\}  =\left\{  1,\ldots,n+m\right\}  }\left(  \prod
_{j=1}^{n}a_{\sigma_{j}-j}\right)  \left(  \prod_{i=1}^{m}b_{\sigma_{n+i}%
-i}\right) \\
&  =\sum_{\substack{\left\{  c_{1}+1,\ldots,c_{n}+n,r_{1}+1,\ldots
,r_{m}+m\right\}  =\left\{  1,\ldots,n+m\right\}  }}\left(  \prod_{j=1}%
^{n}a_{c_{j}}\right)  \left(  \prod_{i=1}^{m}b_{r_{i}}\right)  ,\ \ \text{by
reindexing with }c_{j}=\sigma_{j}-j\ \ \,\text{and }r_{j}=\sigma_{n+i}-i\\
&  =\sum_{\substack{\left\{  c_{1}+1,\ldots,c_{n}+n,r_{1}+1,\ldots
,r_{m}+m\right\}  =\left\{  1,\ldots,n+m\right\}  }}\left(  \prod_{j=1}%
^{n}\sum_{\substack{S_{1}\in\left\{  0,1\right\}  ^{n}\\\sum_{k=1}^{n}%
S_{1,k}=c_{j}}}\prod_{i=1}^{m}\alpha_{i}^{S_{1,i}}\right)  \left(  \prod
_{i=1}^{m}\sum_{\substack{S_{2}\in\left\{  0,1\right\}  ^{n}\\\sum_{k=1}%
^{m}S_{2,k}=r_{j}}}\prod_{j=1}^{n}\beta_{j}^{S_{2,j}}\right) \\
&  =\sum_{\substack{\left\{  c_{1}+1,\ldots,c_{n}+n,r_{1}+1,\ldots
,r_{m}+m\right\}  =\left\{  1,\ldots,n+m\right\}  }}\left(  \sum
_{\substack{\mathcal{S}_{1}\in\left\{  0,1\right\}  ^{m\times n}\\cs\left(
\mathcal{S}_{1}\right)  =c}}\prod_{i=1}^{m}\prod_{j=1}^{n}\alpha
_{i}^{\mathcal{S}_{1,i,j}}\right)  \left(  \sum_{\substack{\mathcal{S}_{2}%
\in\left\{  0,1\right\}  ^{m\times n}\\rs\left(  \mathcal{S}_{2}\right)
=r}}\prod_{i=1}^{m}\prod_{j=1}^{n}\beta_{j}^{\mathcal{S}_{2,i,j}}\right) \\
&  =\sum_{\substack{\mathcal{S}_{1},\mathcal{S}_{2}\in\left\{  0,1\right\}
^{m\times n}\\cs\left(  \mathcal{S}_{1}\right)  =c\\rs\left(  \mathcal{S}%
_{2}\right)  =r\\\left\{  c_{1}+1,\ldots,c_{n}+n,r_{1}+1,\ldots,r_{m}%
+m\right\}  =\left\{  1,\ldots,n+m\right\}  }} \prod_{i=1}^{m}\prod_{j=1}%
^{n}\alpha_{i}^{\mathcal{S}_{1,i,j}}\prod_{i=1}^{m}\prod_{j=1}^{n}\beta
_{j}^{\mathcal{S}_{2,i,j}}\\
&  =\sum_{\substack{\mathcal{S}_{1},\mathcal{S}_{2}\in\left\{  0,1\right\}
^{m\times n}\\PC\left(  \mathcal{S}_{1},\mathcal{S}_{2}\right)  }}\prod
_{i=1}^{m}\prod_{j=1}^{n}\alpha_{i}^{\mathcal{S}_{1,i,j}}\prod_{j=1}^{n}%
\prod_{i=1}^{m}\beta_{j}^{\mathcal{S}_{2,i,j}}\\
&  =\sum_{\substack{\mathcal{S}_{1},\mathcal{S}_{2}\in\left\{  0,1\right\}
^{m\times n}\\PC\left(  \mathcal{S}_{1},\mathcal{S}_{2}\right)  }}\prod
_{i=1}^{m}\alpha_{i}^{\sum_{j=1}^{n}\mathcal{S}_{1,i,j}}\prod_{j=1}^{n}%
\beta_{j}^{\sum_{i=1}^{m}\mathcal{S}_{2,i,j}}\\
&  =\sum_{\substack{\mathcal{S}_{1},\mathcal{S}_{2}\in\left\{  0,1\right\}
^{m\times n}\\PC\left(  \mathcal{S}_{1},\mathcal{S}_{2}\right)  }}\alpha^{\mu
}\beta^{\nu}\ \ \ \text{where }\mu=rs\left(  \mathcal{S}_{1}\right)
\ \text{and }\nu=cs\left(  \mathcal{S}_{2}\right)
\end{align*}
The claim follows immediately.
\end{proof}

\subsection{If a term has a res-representation then it has a
syl-representation.}

The proof is constructive, that is, it provides an algorithm that takes a
res-representation and produces a syl-representation
(Algorithm~\ref{alg:SylFromRes}). The algorithm is immediate from a key lemma
(Lemma~\ref{lem:Flushed}) that establishes a crucial relationship between the
two representations (syl and res). Thus most of this subsection will be
devoted in stating and proving the key lemma.

Note that $\mathbf{R}$ is a symmetric expression in $\alpha_{1},\ldots
,\alpha_{m}$ and in $\beta_{1},\ldots,\beta_{n}.$ Thus for any term
in~$\mathbf{R}$, every term obtained by permuting $\alpha_{1},\ldots
,\alpha_{m}$ and permuting $\beta_{1},\ldots,\beta_{n}$ is also in
$\mathbf{R.\ }$The same holds for~$\mathbf{S}$ too. Hence, without loss of
generality, we may restrict the proof to the terms $\alpha_{1}^{\mu_{1}}%
\cdots\alpha_{m}^{\mu_{m}}\beta_{1}^{\nu_{1}}\cdots\beta_{n}^{\nu_{n}}$ where
$\mu_{1}\geq\mu_{2}\geq\cdots\geq\mu_{m}$ and $\nu_{1}\geq\nu_{2}\geq
\cdots\geq\nu_{n}.$ Therefore, from now on, we will assume that $\mu$ and
$\nu$ are in non-increasing order.

\begin{definition}
[Bottom-left flushed]\label{def:blf} A matrix is called \emph{bottom--left
flushed} if all the non-zero entries are flushed to the bottom--left. \ Let
$c\in\left\{  0,\ldots,m\right\}  ^{n}$. Then the \emph{bottom--left flushed}
matrix of $c$, written as $F_{c}\in\left\{  0,1\right\}  ^{m\times n},$ is the
\emph{bottom--left flushed matrix }such that $cs(F_{c})=c$.
\end{definition}

\begin{example}
Let
\[
M=\left[
\begin{array}
[c]{ccccc}%
0 & 0 & 0 & 0 & 0\\
0 & 0 & 0 & 0 & 0\\
1 & 0 & 0 & 0 & 0\\
1 & 0 & 0 & 0 & 0\\
1 & 1 & 1 & 0 & 0\\
1 & 1 & 1 & 1 & 0
\end{array}
\right]
\]
Then $M$ is flushed. Note also that $M=F_{\left(  4,2,2,1,0\right)  }$.
\end{example}

\begin{lemma}
\label{lem:Flushed_PC} Let $M\in\left\{  0,1\right\}  ^{m\times n}$ be
bottom-left flushed. Then we have $PC\left(  \bar{M},M\right)  $.
\end{lemma}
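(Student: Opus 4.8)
The plan is to convert the statement $PC(\overline M, M)$ into an elementary numerical claim and then verify it directly. Write $c = cs(M)$ and $r = rs(M)$. Since $\overline M_{ij} = 1 - M_{ij}$, one computes $acs(\overline M)_j = j + \sum_{i}(1 - M_{ij}) = j + m - c_j$ and $ars(M)_i = i + \sum_j M_{ij} = i + r_i$, so $PC(\overline M, M)$ asserts exactly that the $m + n$ integers
\[
c'_j := j + m - c_j \quad (1 \le j \le n), \qquad r'_i := i + r_i \quad (1 \le i \le m)
\]
together form the set $\{1, \dots, m+n\}$. As there are exactly $m+n$ of these integers, it is enough to prove (a) each of them lies in $\{1, \dots, m+n\}$ and (b) they are pairwise distinct; containment and distinctness then force the desired equality by counting.

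Part (a) is immediate from $0 \le c_j \le m$ and $0 \le r_i \le n$. For part (b) I would first treat each family on its own: since $M$ is bottom-left flushed, its column sums satisfy $c_1 \ge c_2 \ge \cdots \ge c_n$, hence $c'_{j+1} - c'_j = 1 + (c_j - c_{j+1}) \ge 1$ and $c'$ is strictly increasing; likewise the $1$'s of each column sit at the bottom, so lower rows carry at least as many $1$'s as higher ones, giving $r_1 \le r_2 \le \cdots \le r_m$ and $r'_{i+1} - r'_i = 1 + (r_{i+1} - r_i) \ge 1$, so $r'$ is strictly increasing. What remains — and the only step I expect to require any thought — is to exclude a coincidence $c'_j = r'_i$ between the two families. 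The key is to first record the conjugate-partition identity forced by bottom-left flushedness, namely $M_{ij} = 1 \iff c_j \ge m - i + 1$, whence $r_i = \#\{\,j : c_j \ge m - i + 1\,\}$.

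With that identity in hand, suppose $c'_j = r'_i$ and set $k := m - i + 1 \in \{1, \dots, m\}$; the equation becomes $\#\{\,j' : c_{j'} \ge k\,\} = j - c_j + k - 1$. I anticipate finishing with a short case split on the size of $c_j$: if $c_j \ge k$, then $c_1, \dots, c_j$ are all $\ge k$, so the left side is $\ge j$ while the right side is $\le j - 1$; if $c_j \le k - 1$, then (by monotonicity of $c$) only columns of index $< j$ can satisfy $c_{j'} \ge k$, so the left side is $\le j - 1$ while the right side is $\ge j$ — a contradiction in either case. This gives (b), and together with (a) it proves $PC(\overline M, M)$. The genuine content is really just the reformulation: once the coupling condition is rewritten through the conjugate-partition identity, the case analysis is automatic. (One could instead induct on $\sum_{i,j} M_{ij}$, adjoining one addable cell of the associated Young diagram at a time and tracking the effect on $acs(\overline M)$ and $ars(M)$, but this seems to need more bookkeeping than the direct argument above.)
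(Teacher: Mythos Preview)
Your proof is correct. The reformulation of $PC(\overline M,M)$ as a statement about the $m+n$ integers $c'_j=j+m-c_j$ and $r'_i=i+r_i$ is accurate, the monotonicity of $c$ and $r$ follows from bottom-left flushedness as you say, the conjugate-partition identity $r_i=\#\{j':c_{j'}\ge m-i+1\}$ is exactly what bottom-left flushedness encodes, and your case split on $c_j\ge k$ versus $c_j\le k-1$ cleanly rules out any collision between the two families.

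The paper's proof, however, proceeds quite differently: it inducts on $m+n$. If $M_{mn}=0$ then the last column of $M$ is all zero (by flushedness), so deleting it yields a bottom-left flushed $m\times(n-1)$ matrix $M^*$; the values $c',r'$ for $M$ agree with those for $M^*$ together with the extra value $c'_n=m+n$, and the induction hypothesis finishes. If $M_{mn}=1$ one deletes the last (all-ones) row instead, picking up the extra value $r'_m=m+n$. Your argument trades this structural induction for an explicit analysis via the conjugate-partition description of $M$: it is a bit longer in the cross-family step but is entirely self-contained and makes the underlying Young-diagram combinatorics transparent. The paper's induction is shorter and avoids the case analysis at the cost of hiding that combinatorial content. (Your parenthetical alternative---inducting on $\sum M_{ij}$ by adding one cell at a time---is closer in spirit to the paper's approach, though the paper's induction on $m+n$ is slicker still.)
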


\begin{proof}
We will prove by mathematical induction on $m+n$. If $m+n=0$ (i.e. $m=n=0$),
then the implication holds vacuously. Now, let us assume that the implication
holds for all bottom-left flushed $m\times n$ matrix such that $m+n$ $<k$.
Consider an arbitrary bottom-left flushed $m\times n$ matrix $M$ such that
$m+n=k.$ Let $c=cs(\bar{M})$ and $r=rs(M)$. We consider two cases as in the
following two figures,
%Figure~\ref{fig-proof2}.
%\begin{figure}[t]

\usetikzlibrary{arrows} \pagestyle{empty}

\begin{tikzpicture}[line cap=round,line join=round,>=triangle 45,x=1.0cm,y=1.0cm]
\clip(-1.14,-1.162) rectangle (14.59,5.02);
\draw (0.,4.)-- (5.,4.);
\draw (5.,4.)-- (5.,0.);
\draw (0.,4.)-- (0.,0.);
\draw (0.,0.)-- (5.,0.);
\draw (8.,4.)-- (13.,4.);
\draw (8.,4.)-- (8.,0.);
\draw (8.,0.)-- (13.,0.);
\draw (13.,0.)-- (13.,4.);
\draw (0.,4.)-- (1.,4.);
\draw (1.,4.)-- (1.,3.5);
\draw (1.,3.5)-- (1.38,3.5);
\draw (1.38,3.5)-- (1.38,2.5);
\draw (1.38,2.5)-- (3.,2.5);
\draw (3.,2.5)-- (3.,1.2);
\draw (3.,1.2)-- (3.8,1.2);
\draw (3.8,1.2)-- (3.8,0.);
\draw (9.,4.)-- (9.,3.5);
\draw (9.,3.5)-- (9.38,3.5);
\draw (9.38,3.5)-- (9.38,2.5);
\draw (9.38,2.5)-- (11.,2.5);
\draw (11.,2.5)-- (11.,1.2);
\draw (11.,1.2)-- (11.80,1.2);
\draw (11.80,1.2)-- (11.80,0.75);
\draw (11.80,0.75)-- (13.,0.75);
\draw [dotted] (4.6,0.)-- (4.6,4.);
\draw [dotted] (8.,0.43)-- (13.,0.43);
\draw [->] (-0.4,4.) -- (-0.4,0.);
\draw [->] (-0.4,0.) -- (-0.4,4.);
\draw [->] (0.,-0.4) -- (4.6,-0.4);
\draw [->] (4.6,-0.4) -- (0.,-0.4);
\draw [->] (0.,4.4) -- (5.,4.4);
\draw [->] (5.,4.4) -- (0.,4.4);
\draw [->] (0.,4.4) -- (5.,4.4);
\draw [->] (8.,4.4) -- (13.,4.4);
\draw [->] (13.,4.4) -- (8.,4.4);
\draw [->] (7.6,4.) -- (7.6,0.);
\draw [->] (7.6,0.) -- (7.6,4.);
\draw [->] (13.4,4.) -- (13.4,0.4);
\draw [->] (13.4,0.4) -- (13.4,4.);
\draw (13.400,2.158) node[anchor=north west] {$m-1$};
\draw (-0.900,2.158) node[anchor=north west] {$m$};
\draw (7.065,2.158) node[anchor=north west] {$m$};
\draw (2.16,4.780) node[anchor=north west] {$n$};
\draw (10.125,4.780) node[anchor=north west] {$n$};
\draw (1.984,-0.379) node[anchor=north west] {$n-1$};
\draw (2.500,3.500) node[anchor=north west] {0};
\draw (1.500,1.500) node[anchor=north west] {1};
\draw (10.500,3.500) node[anchor=north west] {0};
\draw ( 9.500,1.500) node[anchor=north west] {1};
\end{tikzpicture}

\noindent where we have all 0's above the ``stairs'' (jagged sold lines) and
all 1's below the stairs.

\begin{description}
[leftmargin=8em,style=nextline,itemsep=0.0em]

\item[Case $M_{mn}=0$.] Since $M$ is bottom-left flushed, the last column of
$M$ is all zero like the above left figure. Let $M^{\ast}$ be the
$m\times(n-1)$ matrix obtained from $M$ by deleting the last column of $M$.
Note that $M^{\ast}$ is also bottom-left bottom-left flushed. Let $c^{\ast
}=cs(\overline{M^{\ast}})$ and $r^{\ast}=rs(M^{\ast})$. Then
\[
r=r^{\ast}~\text{ and }c=\left(  c_{1}^{\ast},\ldots,c_{n-1}^{\ast},m\right)
.
\]
(In the above, $c$ and $c^{\ast}$ are the column sum vectors of $\bar{M}$ and
$\overline{M^{\ast}}$, respectively, and hence they count the number of 0 on
the columns of $M$ and $M^{\ast}$). Thus%
\begin{align*}
&  \left\{  c_{1}+1,\ldots,c_{n}+n,r_{1}+1,\ldots,r_{m}+m\right\} \\
&  =\left\{  c_{1}^{\ast}+1,\ldots,c_{n-1}^{\ast}+n-1,m+n,r_{1}^{\ast
}+1,\ldots,r_{m}^{\ast}+m\right\} \\
&  =\left\{  c_{1}^{\ast}+1,\ldots,c_{n-1}^{\ast}+n-1,r_{1}^{\ast}%
+1,\ldots,r_{m}^{\ast}+m\right\}  \cup\left\{  m+n\right\} \\
&  =\left\{  1,\ldots,m+n-1\right\}  \cup\left\{  m+n\right\}
\ \ \ \ \text{from the induction hypothesis}\\
&  =\left\{  1,\ldots,m+n\right\}  .
\end{align*}

\item[Case $M_{mn}=1$.] Since $M$ is bottom-left flushed, the last row of $M$
is all one like the above right figure. Let $M^{\ast}$ be the $(m-1)\times
n\ $matrix obtained from $M$ by deleting the last row of $M$. Note that
$M^{\ast}$ is also bottom-left flushed. Let $c^{\ast}=cs(\overline{M^{\ast}})$
and $r^{\ast}=rs(M^{\ast})$. Note that
\[
c=c^{\ast}\text{ and }r=\left(  r_{1}^{\ast},\ldots,r_{m-1}^{\ast},n\right)
.
\]
Thus%
\begin{align*}
&  \left\{  c_{1}+1,\ldots,c_{n}+n,r_{1}+1,\ldots,r_{m}+m\right\} \\
&  =\left\{  c_{1}^{\ast}+1,\ldots,c_{n}^{\ast}+n,r_{1}^{\ast}+1,\ldots
,r_{m-1}^{\ast}+m-1,n+m\right\} \\
&  =\left\{  c_{1}^{\ast}+1,\ldots,c_{n}^{\ast}+n,r_{1}^{\ast}+1,\ldots
,r_{m-1}^{\ast}+m-1\right\}  \cup\left\{  n+m\right\} \\
&  =\left\{  1,\ldots,m-1+n\right\}  \cup\left\{  n+m\right\}  \ \ \text{from
the induction hypothesis}\\
&  =\left\{  1,\ldots,m+n\right\}  .
\end{align*}

\end{description}

\noindent Therefore, in both cases, $PC(\bar{M},M)$ holds.
\end{proof}

\begin{definition}
[Sorted/Flushed]\label{def:sorted-flushed} Let $A,B\in\left\{  0,1\right\}
^{m\times n}$. We say that $(A,B)$ is \emph{sorted} iff $acs(A)$ and $ars(B)$
are sorted in increasing order. We say that $(A,B)$ is \emph{flushed} iff $B$
is bottom-left flushed. We say that $(A,B)$ is \emph{sorted-flushed} iff it is
both sorted and flushed.
\end{definition}

\begin{example}
Let
\[
(A,B) =\left[
\begin{array}
[c]{ccc||cc}%
1 & 0 & 1 &  & \\
0 & 1 & 1 &  & \\
0 & 1 & 1 &  & \\
1 & 0 & 0 &  & \\\hline\hline
2 & 2 & 3 & c_{j} & \\
3 & 4 & 6 & c_{j}+j &
\end{array}
\right]  ,\left[
\begin{array}
[c]{ccc||ccc}%
0 & 0 & 0 &  & 0 & 1\\
0 & 0 & 0 &  & 0 & 2\\
1 & 1 & 0 &  & 2 & 5\\
1 & 1 & 1 &  & 3 & 7\\\hline\hline
&  &  &  & r_{i} & r_{i}+i\\
&  &  &  &  &
\end{array}
\right]
\]
Note that $acs(A) =\left(  3,4,6\right)  $ and $ars(B)=(1,2,5,7)$ are sorted
in increasing order. Thus $(A,B)$ is sorted. Note that $B$ is bottom-left
flushed. Thus $(A,B)$ is flushed. Hence $(A,B)$ is sorted-flushed.
\end{example}

\begin{lemma}
\label{lem:Flushed_PC2} Let $A,B\in\left\{  0,1\right\}  ^{m\times n}$. If
$(A,B)$ is sorted-flushed, then we have%
\[
cs(A)=cs(\bar{B}) \ \Longleftrightarrow\ PC\left(  A,B\right)
\]

\end{lemma}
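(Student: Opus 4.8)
The plan is to reduce both implications to Lemma~\ref{lem:Flushed_PC} by exploiting that the adjusted column sum is merely a coordinatewise shift of the column sum: since $acs(M)_j = j + cs(M)_j$ for any $M\in\{0,1\}^{m\times n}$, the two statements $cs(A)=cs(\bar B)$ and $acs(A)=acs(\bar B)$ are equivalent, so I may work entirely with adjusted sums. Moreover, because $(A,B)$ is sorted--flushed, $B$ is bottom--left flushed, and Lemma~\ref{lem:Flushed_PC} gives $PC(\bar B,B)$ for free; writing this out, $\{acs(\bar B)_1,\ldots,acs(\bar B)_n\}\cup\{ars(B)_1,\ldots,ars(B)_m\}=\{1,\ldots,m+n\}$. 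Thus $PC(A,B)$ holds iff the same union with $acs(A)$ replacing $acs(\bar B)$ equals $\{1,\ldots,m+n\}$, and the whole lemma comes down to comparing the vectors $acs(A)$ and $acs(\bar B)$.

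For the forward direction, assume $cs(A)=cs(\bar B)$. Then $acs(A)=acs(\bar B)$ coordinatewise, and substituting this into the displayed equality obtained above from $PC(\bar B,B)$ yields exactly $PC(A,B)$. For the reverse direction, assume $PC(A,B)$. First I observe that $PC(A,B)$, being an equality between an $(m+n)$-element list ($n$ entries of $acs(A)$ together with $m$ entries of $ars(B)$) and the $(m+n)$-element set $\{1,\ldots,m+n\}$, forces all $m+n$ listed integers to be pairwise distinct; in particular $\{acs(A)_j : 1\le j\le n\}=\{1,\ldots,m+n\}\setminus\{ars(B)_i : 1\le i\le m\}$. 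Applying the identical reasoning to $PC(\bar B,B)$ gives $\{acs(\bar B)_j : 1\le j\le n\}=\{1,\ldots,m+n\}\setminus\{ars(B)_i : 1\le i\le m\}$ as well. Hence $acs(A)$ and $acs(\bar B)$ have the same underlying set of $n$ distinct integers. It remains to upgrade this set equality to equality as vectors. On the $A$ side, the sorted hypothesis says $acs(A)$ is arranged in increasing order, and having distinct entries it is strictly increasing. On the $\bar B$ side, $B$ bottom--left flushed makes $cs(B)$ non-increasing, so $cs(\bar B)_j=m-cs(B)_j$ is non-decreasing and $acs(\bar B)_j=j+cs(\bar B)_j$ is strictly increasing. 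Two strictly increasing $n$-vectors with the same set of entries coincide, so $acs(A)=acs(\bar B)$, whence $cs(A)=cs(\bar B)$.

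The main obstacle — indeed essentially the only step that is not bookkeeping with the definitions of $cs$, $acs$, $ars$ and the earlier lemma — is this last passage from ``same set of values'' to ``same vector'', and it is precisely here that the full \emph{sorted--flushed} hypothesis is used: flushedness of $B$ forces $acs(\bar B)$ to be the unique increasing listing of its value set, and sortedness forces $acs(A)$ to be the unique increasing listing of its own; neither condition alone would pin down the correspondence. Everything else follows by direct substitution and an appeal to Lemma~\ref{lem:Flushed_PC}.
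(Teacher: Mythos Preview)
Your proof is correct and follows essentially the same route as the paper's: both directions reduce to Lemma~\ref{lem:Flushed_PC} via the identity $acs(M)_j=j+cs(M)_j$, and in the $\Leftarrow$ direction both you and the paper take set complements of $ars(B)$ in $\{1,\dots,m+n\}$ to conclude that $acs(A)$ and $acs(\bar B)$ agree. The only difference is that you spell out the passage from set equality to vector equality (using that $acs(A)$ is increasing by the sorted hypothesis and $acs(\bar B)$ is increasing by the flushed hypothesis), a step the paper leaves implicit.
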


\begin{proof}
Assume that $(A,B)$ is sorted-flushed. We need to show $cs(A)=cs(\bar{B})
\ \Longleftrightarrow\ PC\left(  A,B\right)  $. We will show direction of
implication one by one.

\noindent\fbox{$\Rightarrow$}\ \ Assume $cs(A)=cs(\bar{B})$. Then we have
$acs(A)=acs(\bar{B})$. From Lemma \ref{lem:Flushed_PC}, we have $PC(\bar
{B},B)$. Thus we have $PC(A,B)$.

\noindent\fbox{$\Leftarrow$}\ \ Assume $PC(A,B)$. Then we have
\[
acs(A)=(1,\ldots,m+n)\setminus ars(B).
\]
From Lemma \ref{lem:Flushed_PC}, we have $PC(\bar{B},B)$. Thus we have
\[
acs(\bar{B})=(1,\ldots,m+n)\setminus ars(B).
\]

\item Thus $acs(A)=acs(\bar{B})$ and in turn $cs(A)=cs(\bar{B)}.$
\end{proof}

\begin{lemma}
\label{lem:Flushed} Let $t=\alpha^{\mu}\beta^{\nu}$ be a term. Let
$\mathcal{R}\in\left\{  0,1\right\}  ^{m\times n}.$ The following two are equivalent.

\begin{enumerate}
\item[(1)] $\mathcal{R}$ is a res-representation of $t.$

\item[(2)] $\left(  \mathcal{R},{F}_{\nu}\right)  $ is a sorted-flushed
syl-representation of $t.$
\end{enumerate}
\end{lemma}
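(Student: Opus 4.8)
The plan is to reduce everything to Lemma~\ref{lem:Flushed_PC2}, which already settles the proper-coupling condition $PC$ for sorted-flushed pairs; the remaining pieces of a syl-representation (namely $rs(\mathcal{S}_1)=\mu$ and $cs(\mathcal{S}_2)=\nu$) and of a res-representation ($rs(\mathcal{R})=\mu$ and $cs(\bar{\mathcal{R}})=\nu$) then match up by simple bookkeeping. The key preliminary remark is that $F_\nu$ is, by construction, bottom-left flushed with $cs(F_\nu)=\nu$; hence $(\mathcal{R},F_\nu)$ is automatically \emph{flushed}, automatically satisfies the condition $cs(\mathcal{S}_2)=\nu$, and $cs(\overline{F_\nu})=\bar{\nu}$ where $\bar{\nu}_j:=m-\nu_j$. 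Moreover, $F_\nu$ being bottom-left flushed forces its row sums to be non-decreasing from the top row to the bottom, so $ars(F_\nu)$ is strictly increasing; thus the ``$ars$ half'' of the sortedness of $(\mathcal{R},F_\nu)$ holds unconditionally.

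For the direction (1)$\Rightarrow$(2) I would start from the two defining conditions of a res-representation, $rs(\mathcal{R})=\mu$ and $cs(\bar{\mathcal{R}})=\nu$, the latter rewritten as $cs(\mathcal{R})=\bar{\nu}$. Since $\nu$ is non-increasing (the standing assumption of this subsection), $\bar{\nu}$ is non-decreasing, so $acs(\mathcal{R})_j=j+\bar{\nu}_j$ is strictly increasing; combined with the preliminary remark, $(\mathcal{R},F_\nu)$ is sorted-flushed. Now $cs(\mathcal{R})=\bar{\nu}=cs(\overline{F_\nu})$, so Lemma~\ref{lem:Flushed_PC2} (applied with $A=\mathcal{R}$ and $B=F_\nu$) yields $PC(\mathcal{R},F_\nu)$. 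Together with $rs(\mathcal{R})=\mu$ and $cs(F_\nu)=\nu$, this is exactly the statement that $(\mathcal{R},F_\nu)$ is a sorted-flushed syl-representation of $t$.

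For the direction (2)$\Rightarrow$(1) I would use the hypotheses that $(\mathcal{R},F_\nu)$ is sorted-flushed and that $PC(\mathcal{R},F_\nu)$ holds, and invoke the reverse implication of Lemma~\ref{lem:Flushed_PC2} to obtain $cs(\mathcal{R})=cs(\overline{F_\nu})=\bar{\nu}$. Complementing gives $cs(\bar{\mathcal{R}})_j=m-cs(\mathcal{R})_j=\nu_j$, i.e.\ $cs(\bar{\mathcal{R}})=\nu$; since $rs(\mathcal{R})=\mu$ is already part of being a syl-representation, $\mathcal{R}$ satisfies both defining conditions of a res-representation of $t$.

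I do not anticipate a substantive obstacle, as the lemma is essentially a repackaging of Lemma~\ref{lem:Flushed_PC2}. The most delicate point is ordering the argument correctly in the direction (1)$\Rightarrow$(2): one must first establish that $(\mathcal{R},F_\nu)$ is sorted-flushed (using that $\nu$ is non-increasing, so that $acs(\mathcal{R})$ comes out increasing) before one is entitled to apply Lemma~\ref{lem:Flushed_PC2}. A minor point to keep straight throughout is that taking complements turns column sums $c$ into $m-c$, which is precisely what links $cs(\bar{\mathcal{R}})=\nu$ with $cs(\mathcal{R})=\bar{\nu}$ and with $cs(\overline{F_\nu})=\bar{\nu}$; implicitly one also uses $\nu\in\{0,\ldots,m\}^n$ so that $F_\nu$ is defined, which holds automatically whenever either representation of $t$ exists.
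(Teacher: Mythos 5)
Your proposal is correct and follows essentially the same route as the paper's own proof: establish that $(\mathcal{R},F_{\nu})$ is sorted-flushed first (using the standing assumption that $\nu$ is non-increasing so that $acs(\mathcal{R})$ is increasing, and the automatic properties of $F_{\nu}$), then invoke Lemma~\ref{lem:Flushed_PC2} in each direction to convert between $cs(\mathcal{R})=cs(\overline{F_{\nu}})=\bar{\nu}$ and $PC(\mathcal{R},F_{\nu})$, with the remaining conditions matching by direct bookkeeping. No gaps; your added remarks about $ars(F_{\nu})$ being strictly increasing and about $\nu\in\{0,\ldots,m\}^{n}$ are accurate and merely make explicit what the paper treats as obvious.
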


\begin{proof}
We show each direction of implication one by one.

\noindent(1) $\Rightarrow$ (2). It follows immediately from the following claims:

\begin{description}
[leftmargin=2.7em,style=nextline,itemsep=0.0em]

\item[\textsf{C1:}] \emph{$\left(  \mathcal{R},{F}_{\nu}\right)  $ is
sorted-flushed.}

From the definition of $F_{\nu}$, it is obvious that $ars(F_{\nu})$ is sorted
in increasing order and that $F_{\nu}$ is bottom-left flushed. Thus it remains
to show that $acs(\mathcal{R})$ is sorted in increasing order. Since
$\mathcal{R}$ is a res-representation of $t$, we have $cs(\mathcal{R}) =
\bar{\nu}$. Recall that at the very beginning of this section we assumed,
without loss of generality, that $\nu$ is sorted in non-increasing order. So,
$cs(\mathcal{R})$ is sorted in non-decreasing order. Thus, $acs(\mathcal{R})$
is sorted in increasing order.

\item[\textsf{C2:}] \emph{$\left(  \mathcal{R},{F}_{\nu}\right)  $ is a
syl-representation of $t$.}

Since $\mathcal{R}$ is a res-representation of $t$, we have $rs(\mathcal{R}) =
\mu.$ From the definitions of $F_{\nu}$, we have $cs(\mathcal{F_{\nu}}) = \nu
$. Thus it remains to show that $PC(\mathcal{R},{F}_{\nu})$. Note
\[
cs(\mathcal{R})=\bar{\nu}=\overline{cs({F}_{\nu})}=cs(\overline{{F}_{\nu}}).
\]
Therefore, from \textsf{C1} and Lemma~\ref{lem:Flushed_PC2}, we have
$PC(\mathcal{R},{F}_{\nu})$.
\end{description}

\noindent(2) $\Rightarrow$ (1). Since $\left(  \mathcal{R},{F}_{\nu}\right)  $
is a syl-representation of $t$, we have $rs(\mathcal{R})=\mu$. Thus it remains
to show that $cs(\mathcal{R})=\bar{\nu}$. Since $\left(  \mathcal{R},{F}_{\nu
}\right)  $ is a sorted-flushed syl-representation of $t$, we have that
$\left(  \mathcal{R},{F}_{\nu}\right)  $ is sorted-flushed and $PC(\mathcal{R}%
,F_{\nu})$. Thus, from Lemma~\ref{lem:Flushed_PC2}, we have
\[
cs(\mathcal{R})=cs(\overline{F_{\nu}})=\overline{cs(F_{\nu})}=\bar{\nu}.
\]

\end{proof}

\begin{algorithm}
[$SylFromRes$]\label{alg:SylFromRes}\ 

\begin{description}
[leftmargin=3em,style=nextline,itemsep=0.0em]

\item[\textsf{In:}] $\mathcal{R}$, a res-representation of a term $t$

\item[\textsf{Out:}] $\left(  \mathcal{S}_{1},\mathcal{S}_{2}\right)  $, a
syl-representation of the term $t$
\end{description}

\medskip\noindent$c\leftarrow cs(\mathcal{R})$

\medskip\noindent$\left(  \mathcal{S}_{1},\mathcal{S}_{2}\right)
\leftarrow(\mathcal{R},F_{\bar{c}})$

\medskip\noindent return $\left(  \mathcal{S}_{1},\mathcal{S}_{2}\right)  $
\end{algorithm}

\begin{example}
We trace the algorithm $SylFromRes$ on the following input.

\begin{description}
[leftmargin=3em,style=nextline,itemsep=0.0em]

\item[\textsf{In:}] $\mathcal{R}=\left[
\begin{array}
[c]{cccc}%
1 & 0 & 1 & 1\\
1 & 1 & 0 & 1\\
1 & 1 & 0 & 1\\
0 & 0 & 1 & 1\\
0 & 1 & 1 & 0
\end{array}
\right]  $, \newline\newline which is a res-representation of the term
$t=\alpha_{1}^{3}\alpha_{2}^{3}\alpha_{3}^{3}\alpha_{4}^{2}\alpha_{5}^{2}%
\beta_{1}^{2}\beta_{2}^{2}\beta_{3}^{2}\beta_{4}^{1}$.
\end{description}

\medskip\noindent$c= [3,3,3,4]$

\medskip\noindent$F_{\bar{c}}= \left[
\begin{array}
[c]{cccc}%
0 & 0 & 0 & 0\\
0 & 0 & 0 & 0\\
0 & 0 & 0 & 0\\
1 & 1 & 1 & 0\\
1 & 1 & 1 & 1
\end{array}
\right]  $

\begin{description}
[leftmargin=3em,style=nextline,itemsep=0.0em]

\item[\textsf{Out:}] $\left(  \mathcal{S}_{1},\mathcal{S}_{2}\right)  =\left[
\begin{array}
[c]{cccc}%
1 & 0 & 1 & 1\\
1 & 1 & 0 & 1\\
1 & 1 & 0 & 1\\
0 & 0 & 1 & 1\\
0 & 1 & 1 & 0
\end{array}
\right]  , \left[
\begin{array}
[c]{cccc}%
0 & 0 & 0 & 0\\
0 & 0 & 0 & 0\\
0 & 0 & 0 & 0\\
1 & 1 & 1 & 0\\
1 & 1 & 1 & 1
\end{array}
\right]  $, \newline\newline which is a syl-representation of the term $t$.
\end{description}
\end{example}

\begin{lemma}
\label{SylFromRes} The algorithm~\ref{alg:SylFromRes} ($SylFromRes$) is
correct. Thus if a term has a res-representation then it has a syl-representation.
\end{lemma}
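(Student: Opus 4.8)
The plan is to observe that, by the time we reach this lemma, essentially all the genuine content has already been packaged into Lemma~\ref{lem:Flushed}; what remains is to unwind the definition of Algorithm~\ref{alg:SylFromRes} and match up one complement. So the proof of correctness will be short, and the ``thus'' clause about existence will be an immediate corollary of correctness together with the symmetry reduction already set up at the start of this subsection.

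Concretely, I would proceed as follows. First, read off what the algorithm returns: on input a res-representation $\mathcal{R}$ of $t=\alpha^{\mu}\beta^{\nu}$ it sets $c=cs(\mathcal{R})$ and outputs $\left(\mathcal{S}_1,\mathcal{S}_2\right)=\left(\mathcal{R},F_{\bar c}\right)$. Next, I would establish the bookkeeping identity $F_{\bar c}=F_{\nu}$: since $\mathcal{R}$ is a res-representation of $t$, Definition~\ref{res-rep} gives $cs(\mathcal{R})=\bar\nu$ componentwise (that is, $c_j=m-\nu_j$), hence $\bar c_j=m-c_j=\nu_j$, so $\bar c=\nu$ and therefore $F_{\bar c}=F_{\nu}$. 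Thus the algorithm in fact returns $\left(\mathcal{R},F_{\nu}\right)$. Now apply Lemma~\ref{lem:Flushed}, direction (1)$\Rightarrow$(2): because $\mathcal{R}$ is a res-representation of $t$, the pair $\left(\mathcal{R},F_{\nu}\right)$ is a sorted--flushed syl-representation of $t$, and in particular a syl-representation of $t$. This proves that the output of $SylFromRes$ is always a syl-representation of the input term, i.e.\ the algorithm is correct. For the final assertion, if a term has a res-representation then feeding that res-representation to $SylFromRes$ produces a syl-representation of it; here we use that, by the symmetry of $\mathbf{R}$ and $\mathbf{S}$ invoked at the beginning of this subsection, it suffices to treat terms with $\mu$ and $\nu$ non-increasing, and that permuting the rows of $\mathcal{S}_1$ and the columns of $\mathcal{S}_2$ leaves $acs(\mathcal{S}_1)$ and $ars(\mathcal{S}_2)$, hence the property $PC$, unchanged, so res- and syl-representations transport correctly under the permutation.

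I do not expect a real obstacle here: the combinatorial heart is Lemma~\ref{lem:Flushed_PC} (the induction showing $PC(\bar M,M)$ for bottom--left flushed $M$), which feeds Lemma~\ref{lem:Flushed_PC2} and then Lemma~\ref{lem:Flushed}, and all of that is already in hand. The only points that require a moment's care are (i) applying the complement consistently, so that the $F_{\bar c}$ manufactured by the algorithm really is the $F_{\nu}$ appearing in Lemma~\ref{lem:Flushed}, and (ii) if one wants to spell out the passage from the sorted case to an arbitrary term, checking that the row/column permutations genuinely preserve the defining conditions of a syl-representation. Neither is more than routine.
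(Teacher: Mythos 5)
Your proof is correct and follows essentially the same route as the paper: compute $c=cs(\mathcal{R})=\bar{\nu}$, note $\bar{c}=\nu$ so the algorithm returns $\left(\mathcal{R},F_{\nu}\right)$, and invoke Lemma~\ref{lem:Flushed} (direction (1)$\Rightarrow$(2)) to conclude it is a (sorted-flushed) syl-representation of $t$. Your additional remarks on the symmetry reduction are a harmless elaboration of the without-loss-of-generality assumption the paper already put in place at the start of the subsection.
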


\begin{proof}
Let $\mathcal{R}$ be an input, a res-representation of a term $t=\alpha^{\mu
}\beta^{\nu}.$ Then $c=\bar{\nu}$ and thus $\bar{c}=\nu.$ From
Lemma~\ref{lem:Flushed}, $\left(  \mathcal{R},F_{\bar{c}}\right)  $ is a
(sorted-flushed) syl-representation of the term $t.$
\end{proof}

\subsection{If a term has a syl-representation then it has a
res-representation.}

The proof is constructive, that is, it provides an algorithm that takes a
syl-representation and produces a res-representation
(Algorithm~\ref{alg:ResFromSyl}). We will again use the key lemma
(Lemma~\ref{lem:Flushed} from the previous subsection) that establishes a
crucial relationship between the two representations (syl and res). In order
to use the key lemma, we need to find an algorithm that transforms a given
syl-representation into a sorted-flushed syl-representation. We will describe
such an algorithm in this subsection. We will divide, naturally, the algorithm
into two subalgorithms.

\begin{itemize}
\item Algorithm~\ref{alg:Sort} ($Sort$):

It transforms a syl-representation of a term into a sorted syl-representation
of the term. It essentially carries out bubble sort.

\item Algorithm~\ref{alg:Flush} ($Flush$):

It transforms a sorted syl-representation of a term into a sorted-flushed
syl-representation of the term. It essentially carries out repeated swapping
of entries of the syl-representation to make it flushed while remaining sorted syl-representation.
\end{itemize}

\noindent Most of this subsection will devoted in describing and proving the correctness
of the two subalgorithms. 
Now we plunge into details.

\newpage
\begin{algorithm}
[$Sort$]\label{alg:Sort}\ 

\begin{description}
[leftmargin=3em,style=nextline,itemsep=0.0em]

\item[\textsf{In:}] $\left(  \mathcal{S}_{1},\mathcal{S}_{2}\right)  $, a
syl-representation of a term $t$

\item[\textsf{Out:}] $\left(  \mathcal{S}_{1}^{\prime},\mathcal{S}_{2}%
^{\prime}\right)  $, a sorted syl-representation of the term $t$
\end{description}

\begin{enumerate}
\item $\left(  \mathcal{S}_{1}^{\prime},\mathcal{S}_{2}^{\prime}\right)
\leftarrow\left(  \mathcal{S}_{1},\mathcal{S}_{2}\right)  $

\item \label{Sort1:Repeat} Repeat

\begin{enumerate}
\item \label{Sort1:C} $C\leftarrow acs\left(  \mathcal{S}_{1}^{\prime}\right)
$

\item \label{Sort1:check} If $C$ is in increasing order then exit the Repeat loop

\item \label{Sort1:j} Find $j\in\{1,\ldots,n-1\}$ such that $C_{j}>C_{j+1}$

\item \label{Sort1:h} $h\leftarrow C_{j}-C_{j+1}$

\item \label{Sort1:i} Repeat $h$ times

\begin{enumerate}
\item Find $i\in\{1,\ldots,m\}$ such that $\mathcal{S}_{1,i,j}^{\prime}=1$ and
$\mathcal{S}_{1,i,j+1}^{\prime}=0$

\item Swap $\mathcal{S}_{1,i,j}^{\prime}$ and $\mathcal{S}_{1,i,j+1}^{\prime}$
\end{enumerate}
\end{enumerate}

\item \label{Sort2:Repeat} Repeat

\begin{enumerate}
\item \label{Sort2:R} $R\leftarrow ars\left(  \mathcal{S}_{2}^{\prime}\right)
$

\item \label{Sort2:check} If $R$ is in increasing order then exit the Repeat loop

\item \label{Sort2:i} Find $i\in\{1,\ldots,m-1\}$ such that $R_{i}>R_{i+1}$

\item \label{Sort2:h} $h\leftarrow R_{i}-R_{i+1}$

\item \label{Sort2:j} Repeat $h$ times

\begin{enumerate}
\item Find $j\in\{1,\ldots,n\}$ such that $\mathcal{S}_{2,i,j}^{\prime}=1$ and
$\mathcal{S}_{2,\,i+1,\,j}^{\prime}=0$

\item Swap $\mathcal{S}_{2,i,j}$ and $\mathcal{S}_{2,i+1,\,j}$
\end{enumerate}
\end{enumerate}

\item Return $\left(  \mathcal{S}_{1}^{\prime},\mathcal{S}_{2}^{\prime
}\right)  $
\end{enumerate}
\end{algorithm}

\begin{example}
We trace the algorithm $Sort$ on the following input.

\begin{description}
[leftmargin=3em,style=nextline,itemsep=0.0em]

\item[\textsf{In:}] $\left(  \mathcal{S}_{1},\mathcal{S}_{2}\right)  =\left[
\begin{array}
[c]{cccc}%
0 & 1 & 1 & 1\\
0 & 1 & 1 & 1\\
1 & 1 & 1 & 0\\
0 & 0 & 1 & 1\\
0 & 1 & 1 & 0
\end{array}
\right]  ,\left[
\begin{array}
[c]{cccc}%
0 & 0 & 0 & 0\\
0 & 1 & 0 & 0\\
1 & 0 & 1 & 0\\
0 & 0 & 0 & 0\\
1 & 1 & 1 & 1
\end{array}
\right]  $, \newline\newline which\ is a syl-representation of the term
$t=\alpha_{1}^{3}\alpha_{2}^{3}\alpha_{3}^{3}\alpha_{4}^{2}\alpha_{5}^{2}%
\beta_{1}^{2}\beta_{2}^{2}\beta_{3}^{2}\beta_{4}^{1}$.
\end{description}

\begin{enumerate}
\item $\left(  \mathcal{S}_{1}^{\prime},\mathcal{S}_{2}^{\prime}\right)
=\left(  \mathcal{S}_{1},\mathcal{S}_{2}\right)  $

\item \medskip\noindent Iteration 1

\begin{enumerate}
\item $C=[2,6,8,7]$

\item $C$ is not sorted

\item $j=3$

\item $h=1$

\begin{enumerate}
\item $i=3$

\item Swap $S^{\prime}_{1,3,3}$ and $S^{\prime}_{1,3,4}$

$\mathcal{S}_{1}^{\prime}=\left[
\begin{array}
[c]{cccc}%
0 & 1 & 1 & 1\\
0 & 1 & 1 & 1\\
1 & 1 & 0 & 1\\
0 & 0 & 1 & 1\\
0 & 1 & 1 & 0
\end{array}
\right]  $
\end{enumerate}
\end{enumerate}

\medskip\noindent Iteration 2

\begin{enumerate}
\item $C=[2,6,7,8]$

\item $C$ is sorted in increasing order
\end{enumerate}

\item Iteration 1

\begin{enumerate}
\item $R=[1,3,5,4,9]$

\item $R$ is not sorted

\item $i=3$

\item $h=1$

\begin{enumerate}
\item $j=1$

\item Swap $S^{\prime}_{2,3,1}$ and $S^{\prime}_{2,4,1}$

$\mathcal{S}^{\prime}_{2} = \left[
\begin{array}
[c]{cccc}%
0 & 0 & 0 & 0\\
0 & 1 & 0 & 0\\
0 & 0 & 1 & 0\\
1 & 0 & 0 & 0\\
1 & 1 & 1 & 1
\end{array}
\right]  $
\end{enumerate}
\end{enumerate}

\medskip\noindent Iteration 2

\begin{enumerate}
\item $R=[1,3,4,5,9]$

\item $R$ is sorted in increasing order
\end{enumerate}

\item Return $\left(  \mathcal{S}_{1}^{\prime},\mathcal{S}_{2}^{\prime
}\right)  $
\end{enumerate}

\begin{description}
\item[\textsf{Out:}] $\left(  \mathcal{S}_{1}^{\prime},\mathcal{S}_{2}%
^{\prime}\right)  = \left[
\begin{array}
[c]{cccc}%
0 & 1 & 1 & 1\\
0 & 1 & 1 & 1\\
1 & 1 & 0 & 1\\
0 & 0 & 1 & 1\\
0 & 1 & 1 & 0
\end{array}
\right]  , \left[
\begin{array}
[c]{cccc}%
0 & 0 & 0 & 0\\
0 & 1 & 0 & 0\\
0 & 0 & 1 & 0\\
1 & 0 & 0 & 0\\
1 & 1 & 1 & 1
\end{array}
\right]  $, \newline\newline which\ is a sorted syl-representation of the term
$t.$
\end{description}
\end{example}

\begin{lemma}
\label{lem:Sort} The algorithm~\ref{alg:Sort} ($Sort$) is correct.
\end{lemma}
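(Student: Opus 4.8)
The plan is to verify that the algorithm $Sort$ terminates, produces a syl-representation of the same term $t$, and that the output is sorted. The correctness splits along the two \texttt{Repeat} loops (lines~\ref{Sort1:Repeat} and~\ref{Sort2:Repeat}), which are essentially symmetric (one works on $\mathcal{S}_1$ via columns, the other on $\mathcal{S}_2$ via rows), so it suffices to analyze one loop carefully and then transfer the argument.

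First I would establish the \textbf{invariants maintained by each inner swap}. Consider the first outer loop. At step~\ref{Sort1:j} we have found $j$ with $C_j > C_{j+1}$ where $C = acs(\mathcal{S}_1')$, i.e.\ $cs(\mathcal{S}_1')_j + j > cs(\mathcal{S}_1')_{j+1} + (j+1)$, so $cs(\mathcal{S}_1')_j \ge cs(\mathcal{S}_1')_{j+1} + 2$; in particular column $j$ has strictly more $1$'s than column $j+1$, so by pigeonhole there is a row $i$ with $\mathcal{S}_{1,i,j}' = 1$ and $\mathcal{S}_{1,i,j+1}' = 0$ --- this guarantees step~(a) of the inner loop never fails, and moreover that it can be executed $h = C_j - C_{j+1}$ times in succession (each swap decreases $cs(\mathcal{S}_1')_j$ by one and increases $cs(\mathcal{S}_1')_{j+1}$ by one, so the gap $cs(\mathcal{S}_1')_j - cs(\mathcal{S}_1')_{j+1}$ drops by $2$ each time, staying $\ge 2$ until the last swap). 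Crucially, each such swap only moves a $1$ within a fixed row $i$ from column $j$ to column $j+1$, hence $rs(\mathcal{S}_1')$ is unchanged; and since $\mathcal{S}_2'$ is untouched in this loop, $cs(\mathcal{S}_2')$ is unchanged. After the $h$ swaps we have $C_j' = C_j - h = C_{j+1} = C_{j+1}' - ?$... more precisely the two adjusted column sums become equal-then-ordered: $C_j$ decreases to $C_{j+1}$ and $C_{j+1}$ increases, so afterward $C_j \le C_{j+1}$. I would then argue this is exactly one pass of bubble sort on the multiset-with-positions $acs(\mathcal{S}_1')$, and that $\sum_j C_j = \sum_j (cs(\mathcal{S}_1')_j + j)$ is invariant while the number of inversions strictly decreases, giving \textbf{termination} of the outer loop; at exit, $acs(\mathcal{S}_1')$ is in increasing order by the test at step~\ref{Sort1:check}. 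The second outer loop is handled identically with rows/$\mathcal{S}_2$/$ars$ in place of columns/$\mathcal{S}_1$/$acs$, and it likewise preserves $cs(\mathcal{S}_2')$ (each swap moves a $1$ within a fixed column $j$ between rows $i$ and $i+1$) and does not touch $\mathcal{S}_1'$.

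Next I would \textbf{assemble the output correctness}. On input $(\mathcal{S}_1,\mathcal{S}_2)$ a syl-representation of $t = \alpha^\mu\beta^\nu$, we have $rs(\mathcal{S}_1) = \mu$, $cs(\mathcal{S}_2) = \nu$, and $PC(\mathcal{S}_1,\mathcal{S}_2)$. The two loops preserve $rs(\mathcal{S}_1') = \mu$ and $cs(\mathcal{S}_2') = \nu$ by the invariants above. For the property $PC$: note that $PC(\mathcal{S}_1',\mathcal{S}_2')$ asserts $\{\,acs(\mathcal{S}_1')_1,\ldots,acs(\mathcal{S}_1')_n, ars(\mathcal{S}_2')_1,\ldots,ars(\mathcal{S}_2')_m\,\} = \{1,\ldots,m+n\}$ \emph{as sets}, i.e.\ it depends only on the multisets $acs(\mathcal{S}_1')$ and $ars(\mathcal{S}_2')$. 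The first loop only permutes the entries of $acs(\mathcal{S}_1')$ (bubble sort) while leaving $ars(\mathcal{S}_2')$ fixed, and the second loop only permutes the entries of $ars(\mathcal{S}_2')$ while leaving $acs(\mathcal{S}_1')$ fixed --- wait, here I must be careful: the first loop changes the \emph{values} $cs(\mathcal{S}_1')_j$, so $acs(\mathcal{S}_1')$ is not merely permuted. I would instead argue directly: after the $h$ inner swaps for a chosen $j$, the pair $(C_j, C_{j+1})$ is replaced by $(C_{j+1}, C_j)$ --- indeed $C_j$ drops by $h = C_j - C_{j+1}$ to become $C_{j+1}$, and $C_{j+1}$ rises by $h$ to become $C_j$ --- so each outer iteration swaps two adjacent entries of the tuple $acs(\mathcal{S}_1')$, leaving the multiset $\{acs(\mathcal{S}_1')_1,\ldots,acs(\mathcal{S}_1')_n\}$ invariant; symmetrically for the second loop. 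Hence the multisets $acs(\mathcal{S}_1')$ and $ars(\mathcal{S}_2')$ are unchanged overall, so $PC$ is preserved, and $(\mathcal{S}_1',\mathcal{S}_2')$ is a syl-representation of $t$; by the exit tests it is sorted.

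\textbf{The main obstacle} I anticipate is the bookkeeping in the inner loop of the first outer iteration: one must show that repeating step~\ref{Sort1:i} exactly $h = C_j - C_{j+1}$ times is both well-defined (a suitable row $i$ exists at every one of the $h$ iterations, not just the first) and achieves precisely the adjacent transposition $(C_j,C_{j+1}) \mapsto (C_{j+1},C_j)$ without over- or under-shooting. The clean way to see existence at each step is the pigeonhole observation above applied to the \emph{current} column sums: before the $k$-th inner swap ($1 \le k \le h$) the gap $cs(\mathcal{S}_1')_j - cs(\mathcal{S}_1')_{j+1} = h - 2(k-1) \ge 2 - $ hmm, this is $h - 2(k-1)$ which for $k = h$ equals $h - 2h + 2 = 2 - h$, possibly negative. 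The correct invariant is that $cs(\mathcal{S}_1')_j > cs(\mathcal{S}_1')_{j+1}$ before each of the $h$ swaps, i.e.\ the gap starts at $h$ and decreases by $2$ each time but we only need it positive, which holds as long as we have done fewer than $\lceil h/2 \rceil$... I would resolve this by reframing: instead track that column $j$ still contains a $1$ in some row where column $j+1$ has a $0$; equivalently, the number of rows $i$ with $(\mathcal{S}_{1,i,j}',\mathcal{S}_{1,i,j+1}') = (1,0)$ minus the number with $(0,1)$ equals $cs(\mathcal{S}_1')_j - cs(\mathcal{S}_1')_{j+1}$, and each chosen swap converts one $(1,0)$-row into a $(0,1)$-row, so starting from a difference of $h$ we can perform exactly $h$ such conversions before running out, landing at difference $-h$, i.e.\ $C_j \mapsto C_{j+1}$ and $C_{j+1} \mapsto C_j$ as claimed. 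Once this lemma-level fact is pinned down, the rest --- invariance of $rs$, $cs$, the multisets, hence of being a syl-representation of $t$, plus termination by decreasing inversion count --- is routine, and the symmetric second loop needs no new ideas.
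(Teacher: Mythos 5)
Your proposal is correct and follows essentially the same route as the paper's proof: show that each inner swap preserves $rs(\mathcal{S}_1')$ and $cs(\mathcal{S}_2')$, that the $h$ swaps for a chosen $j$ amount to an adjacent transposition of $C_j$ and $C_{j+1}$ (so $acs(\mathcal{S}_1')$ is only permuted as a multiset and $PC$ is preserved), that the outer loop is bubble sort and hence terminates, and that the second loop is symmetric. Two small loose ends, neither fatal: first, your column-sum gap is off by one --- from $C_j>C_{j+1}$ one gets $cs(\mathcal{S}_1')_j-cs(\mathcal{S}_1')_{j+1}=h+1$, not $h$, so the count of $(1,0)$-rows is at least $h+1$ and the $h$ swaps are feasible (your final reframing via $(1,0)$-versus-$(0,1)$ row counts is the right argument; the earlier claim that the gap stays $\ge 2$ until the last swap is false for $h\ge 3$ and should be dropped). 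Second, you take for granted that Step~\ref{Sort1:j} succeeds, i.e.\ that when $C$ is not in increasing order there is a \emph{strict} descent $C_j>C_{j+1}$; this needs the observation (made explicitly in the paper) that $PC(\mathcal{S}_1',\mathcal{S}_2')$ forces the entries of $acs(\mathcal{S}_1')$ to be pairwise distinct, ruling out $C_j=C_{j+1}$. Since you do maintain $PC$ as an invariant, this is a one-line addition.
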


\begin{proof}
Let $\left(  \mathcal{S}_{1},\mathcal{S}_{2}\right)  $ be an input, that is, a
syl-representation of a term $t=\alpha^{\mu}\beta^{\nu}$. The correctness of
the algorithm is immediate from the following claims.

\begin{description}
[leftmargin=2.7em,style=nextline,itemsep=0.5em]

\item[\textsf{C1:}] \emph{Right after Step~\ref{Sort1:Repeat}, $\left(
\mathcal{S}_{1}^{\prime},\mathcal{S}_{2}^{\prime}\right)  $ is a
syl-representation of the term $t$ and $acs( \mathcal{S}_{1}^{\prime})$ is
sorted in increasing order.} The proof of the claim is immediate from the
following sub-claims.

\begin{enumerate}
\item \emph{Right before Step~\ref{Sort1:C}, $\left(  \mathcal{S}_{1}^{\prime
},\mathcal{S}_{2}^{\prime}\right)  $ is a syl-representation of the term $t$}.

We prove it by induction on the number of iterations. At the first iteration,
it is trivially true since $\left(  \mathcal{S}_{1}^{\prime},\mathcal{S}%
_{2}^{\prime}\right)  =\left(  \mathcal{S}_{1},\mathcal{S}_{2}\right)  .$ We
assume that it is true after some number of iterations. We need to show that
it is still true after one more iteration.

It is immediate from the following observations.

\begin{itemize}
\item $rs\left(  \mathcal{S}_{1}^{\prime}\right)  =\mu$.

Obvious since the loop body does not change $rs\left(  \mathcal{S}_{1}%
^{\prime}\right)  .$

\item $cs\left(  \mathcal{S}_{2}^{\prime}\right)  =\nu$.

Obvious since the loop body does not change $\mathcal{S}_{2}^{\prime}$

\item $PC\left(  \mathcal{S}_{1}^{\prime},\mathcal{S}_{2}^{\prime}\right)  .$

Let $A,B\in\{1,\ldots,m\}$ such that $A=C_{j}$ and $B=C_{j+1}$. From
Step~\ref{Sort1:h} we know that $C_{j}=C_{j+1}+h$. Inside Step~\ref{Sort1:i},
we have increased $C_{j+1}$ by $h$ and decreased $C_{j}$ by $h$ . Thus after
Step~\ref{Sort1:i} we have
\begin{align*}
C_{j+1}  &  \leftarrow C_{j+1}+h\\
C_{j}  &  \leftarrow C_{j}-h
\end{align*}
Hence $C_{j}=B$ and $C_{j+1}=A$, that is, we have swapped $C_{j}$ and
$C_{j+1}.$ In other words, the loop body does not change $C$ as a set. Thus
$PC\left(  \mathcal{S}_{1}^{\prime},\mathcal{S}_{2}^{\prime}\right)  $ still holds.
\end{itemize}

\item \emph{The repeat loop in Step~\ref{Sort1:Repeat} terminates}.

It is a bubble sort algorithm executed on a finite list $C$. Therefore it terminates.

\item \emph{In Step~\ref{Sort1:j}, there exists $j\in\{1,\ldots,n-1\}$ such
that $C_{j}>C_{j+1}$}.

The claim is immediate from the following observations.

\begin{itemize}
\item Since we are at Step~\ref{Sort1:j}, the `if' condition in
Step~\ref{Sort1:check} is not satisfied. Hence $C$ is not in increasing order.
Thus there exists $j\in\{1,\ldots,n-1\}$ such that $C_{j}\geq C_{j+1}$.

\item Since $\left(  \mathcal{S}_{1}^{\prime},\mathcal{S}_{2}^{\prime}\right)
$ is a syl-representation, we have $C_{j}\neq C_{j+1}$.
\end{itemize}

\item \emph{In Step~\ref{Sort1:i}(i), there exists $i\in\{1,\ldots,m\}$ such
that $\mathcal{S}_{1,i,j}^{\prime}=1$ and $\mathcal{S}_{1,i,j+1}^{\prime}=0$}.

From Step~\ref{Sort1:h} we know $h=C_{j}-C_{j+1}$. Hence there must exist $h$
different $i\in\{1,\ldots,m\}$ such that $\mathcal{S}_{1,i,j}^{\prime}=1$ and
$\mathcal{S}_{1,i,j+1}^{\prime}=0$.
\end{enumerate}

\item[\textsf{C2:}] \emph{Right after Step~\ref{Sort2:Repeat}, $\left(
\mathcal{S}_{1}^{\prime},\mathcal{S}_{2}^{\prime}\right)  $ is a
syl-representation of the term $t$ and $acs(( \mathcal{S}_{1}^{\prime})$ and
$ars(\mathcal{S}_{2}^{\prime})$ are sorted in increasing order.} The proof of
the claim is symmetric to the proof of \text{C1}. One only needs to switch the
roles of $S_{1}^{\prime}$ and $S_{2}^{\prime}$ and the roles of columns and rows.
\end{description}
\end{proof}

\begin{algorithm}
[$Flush$]\label{alg:Flush}\ 

\begin{description}
[leftmargin=3em,style=nextline,itemsep=0.0em]

\item[\textsf{In:}] $\left(  \mathcal{S}_{1},\mathcal{S}_{2}\right)  $, a
sorted syl-representation of a term $t$

\item[\textsf{Out:}] $\left(  \mathcal{S}_{1}^{\prime},\mathcal{S}_{2}%
^{\prime}\right)  $, a sorted-flushed syl-representation of the term $t$
\end{description}

\begin{enumerate}
\item $\left(  \mathcal{S}_{1}^{\prime},\mathcal{S}_{2}^{\prime}\right)
\leftarrow\left(  \mathcal{S}_{1},\mathcal{S}_{2}\right)  $

\item Repeat

\begin{enumerate}
\item \label{Flush:Begin} If $\left(  \mathcal{S}_{1}^{\prime},\mathcal{S}%
_{2}^{\prime}\right)  $ is flushed then return $\left(  \mathcal{S}%
_{1}^{\prime},\mathcal{S}_{2}^{\prime}\right)  $

\item $c\leftarrow cs(\mathcal{S}_{1}^{\prime})$

$r\leftarrow rs(\mathcal{S}_{2}^{\prime})$

$C\leftarrow acs\left(  \mathcal{S}_{1}^{\prime}\right)  $

$R\leftarrow ars\left(  \mathcal{S}_{2}^{\prime}\right)  $

\item \label{Flush:Findij} Find $\left(  i,j\right)  \in\{1,\ldots
,m-1\}\times\left\{  1,\ldots,n\right\}  \ $such that $\mathcal{S}%
_{2,i,j}^{\prime}=1\ $and\ $\mathcal{S}_{2,i+1,j}^{\prime}=0$

Swap $\mathcal{S}_{2,i,j}^{\prime}\ $and $\mathcal{S}_{2,i+1,j}^{\prime}$

%$r'\leftarrow rs({\cal S}_{2}^{\prime})$

\item \label{Flush:i1} $i_{\ell} \leftarrow\mathrm{min}\,\{ \, k ~|~
r_{k}=r_{i}, \, k \leq i \}$

\item \label{Flush:Findj1} If $i_{\ell}< i$ then

\quad Find $j\in\{1,\ldots,n\}$ such that $\mathcal{S}_{2,i_{\ell},j}^{\prime
}=1$ and $\mathcal{S}_{2,i,j}^{\prime}=0$

\quad Swap $\mathcal{S}_{2,i_{\ell},j}^{\prime}$ and $\mathcal{S}%
_{2,i,j}^{\prime}$

\item \label{Flush:i2} $i_{u} \leftarrow\mathrm{max}\,\{ \, k ~|~
r_{k}=r_{i+1}, \, k \ge i+1 \}$

\item \label{Flush:Findj2} If $i+1 < i_{u}$ then

\quad Find $j\in\{1,\ldots,n\}$ such that $\mathcal{S}_{2,i+1,j}^{\prime}=1$
and $\mathcal{S}_{2,i_{u},j}^{\prime}=0$.

\quad Swap $\mathcal{S}_{2,i+1,j}^{\prime}$ and $\mathcal{S}_{2,i_{u}%
,j}^{\prime}$

%$r'\leftarrow rs({\cal S}_{2}^{\prime})$

\item \label{Flush:Findi}Find $i\in\{1,\ldots,m\}$ and $j_{\ell}<j_{u}%
\in\{1,\ldots,n\}$ such that

\quad$\mathcal{S}_{1,i,j_{\ell}}^{\prime}=0$ and $\mathcal{S}_{1,i,j_{u}%
}^{\prime}=1$ and $C_{j_{\ell}}=R_{i_{\ell}}-1$ and $C_{j_{u}}=R_{i_{u}}+1$

Swap $\mathcal{S}_{1,i,j_{\ell}}^{\prime}$ and $\mathcal{S}_{1,i,j_{u}%
}^{\prime}$
\end{enumerate}
\end{enumerate}
\end{algorithm}

\begin{example}
We trace the algorithm $Flush$ on the following input.

\begin{description}
[leftmargin=3em,style=nextline,itemsep=0.0em]

\item[\textsf{In:}] $\left(  \mathcal{S}_{1},\mathcal{S}_{2}\right)  =\left[
\begin{array}
[c]{cccc}%
0 & 1 & 1 & 1\\
0 & 1 & 1 & 1\\
1 & 1 & 0 & 1\\
0 & 0 & 1 & 1\\
0 & 1 & 1 & 0
\end{array}
\right]  ,\left[
\begin{array}
[c]{cccc}%
0 & 0 & 0 & 0\\
0 & 1 & 0 & 0\\
0 & 0 & 1 & 0\\
1 & 0 & 0 & 0\\
1 & 1 & 1 & 1
\end{array}
\right]  $, \newline\newline which is a sorted syl-representation of the term
$t=\alpha_{1}^{3}\alpha_{2}^{3}\alpha_{3}^{3}\alpha_{4}^{2}\alpha_{5}^{2}%
\beta_{1}^{2}\beta_{2}^{2}\beta_{3}^{2}\beta_{4}^{1}$.
\end{description}

\begin{enumerate}
\item $\left(  \mathcal{S}_{1}^{\prime},\mathcal{S}_{2}^{\prime}\right)
=\left(  \mathcal{S}_{1},\mathcal{S}_{2}\right)  $

\item Iteration 1

\begin{enumerate}
\item $\left(  \mathcal{S}_{1}^{\prime},\mathcal{S}_{2}^{\prime}\right)  $ is
not flushed

\item $c=[1,4,4,4]$

$r=[0,1,1,1,4]$

$C=[2,6,7,8]$

$R=[1,3,4,5,9]$

\item $i=2$, $j=2$

Swap $\mathcal{S}_{2,2,2}^{\prime}$ and $\mathcal{S}_{2,3,2}^{\prime}$

$\left(  \mathcal{S}_{1}^{\prime},\mathcal{S}_{2}^{\prime}\right)  =\left[
\begin{array}
[c]{cccc}%
0 & 1 & 1 & 1\\
0 & 1 & 1 & 1\\
1 & 1 & 0 & 1\\
0 & 0 & 1 & 1\\
0 & 1 & 1 & 0
\end{array}
\right]  ,\left[
\begin{array}
[c]{cccc}%
0 & 0 & 0 & 0\\
0 & 0 & 0 & 0\\
0 & 1 & 1 & 0\\
1 & 0 & 0 & 0\\
1 & 1 & 1 & 1
\end{array}
\right]  $

\item $i_{\ell}=2$

\item $i_{\ell} \not <  i$

\item $i_{u}=4$

\item $i+1 < i_{u}$

$j=2$

Swap $\mathcal{S}_{2,3,2}^{\prime}$ and $\mathcal{S}_{2,4,2}^{\prime}$

$\left(  \mathcal{S}_{1}^{\prime},\mathcal{S}_{2}^{\prime}\right)  =\left[
\begin{array}
[c]{cccc}%
0 & 1 & 1 & 1\\
0 & 1 & 1 & 1\\
1 & 1 & 0 & 1\\
0 & 0 & 1 & 1\\
0 & 1 & 1 & 0
\end{array}
\right]  ,\left[
\begin{array}
[c]{cccc}%
0 & 0 & 0 & 0\\
0 & 0 & 0 & 0\\
0 & 0 & 1 & 0\\
1 & 1 & 0 & 0\\
1 & 1 & 1 & 1
\end{array}
\right]  $

%$r'\leftarrow rs({\cal S}_{2}^{\prime})$

%$r'\leftarrow rs({\cal S}_{2}^{\prime})$

%$r'\leftarrow rs({\cal S}_{2}^{\prime})$

\item $i=1$ and $j_{\ell}=1$, $j_{u}=2$

Swap $\mathcal{S}_{1,1,1}^{\prime}$ and $\mathcal{S}_{1,1,2}^{\prime}$

$\left(  \mathcal{S}_{1}^{\prime},\mathcal{S}_{2}^{\prime}\right)  =\left[
\begin{array}
[c]{cccc}%
1 & 0 & 1 & 1\\
0 & 1 & 1 & 1\\
1 & 1 & 0 & 1\\
0 & 0 & 1 & 1\\
0 & 1 & 1 & 0
\end{array}
\right]  , \left[
\begin{array}
[c]{cccc}%
0 & 0 & 0 & 0\\
0 & 0 & 0 & 0\\
0 & 0 & 1 & 0\\
1 & 1 & 0 & 0\\
1 & 1 & 1 & 1
\end{array}
\right]  $
\end{enumerate}

\medskip\noindent Iteration 2

\begin{enumerate}
\item $\left(  \mathcal{S}_{1}^{\prime},\mathcal{S}_{2}^{\prime}\right)  $ is
not flushed

\item $c=[2,3,4,4]$

$r=[0,0,1,2,4]$

$C=[3,5,7,8]$

$R=[1,2,4,6,9]$

\item $i=3$, $j=3$

Swap $\mathcal{S}_{2,3,3}^{\prime}$ and $\mathcal{S}_{2,4,3}^{\prime}$

$\left(  \mathcal{S}_{1}^{\prime},\mathcal{S}_{2}^{\prime}\right)  =\left[
\begin{array}
[c]{cccc}%
1 & 0 & 1 & 1\\
0 & 1 & 1 & 1\\
1 & 1 & 0 & 1\\
0 & 0 & 1 & 1\\
0 & 1 & 1 & 0
\end{array}
\right]  ,\left[
\begin{array}
[c]{cccc}%
0 & 0 & 0 & 0\\
0 & 0 & 0 & 0\\
0 & 0 & 0 & 0\\
1 & 1 & 1 & 0\\
1 & 1 & 1 & 1
\end{array}
\right]  $

\item $i_{\ell}=3$

\item $i_{\ell} \not <  i$

\item $i_{u}=4$

\item $i+1 \not <  i_{u}$

%$r'\leftarrow rs({\cal S}_{2}^{\prime})$

%$r'\leftarrow rs({\cal S}_{2}^{\prime})$

%$r'\leftarrow rs({\cal S}_{2}^{\prime})$

\item $i=2$ and $j_{\ell}=1$, $j_{u}=3$

Swap $\mathcal{S}_{1,2,1}^{\prime}$ and $\mathcal{S}_{1,2,3}^{\prime}$

$\left(  \mathcal{S}_{1}^{\prime},\mathcal{S}_{2}^{\prime}\right)  =\left[
\begin{array}
[c]{cccc}%
1 & 0 & 1 & 1\\
1 & 1 & 0 & 1\\
1 & 1 & 0 & 1\\
0 & 0 & 1 & 1\\
0 & 1 & 1 & 0
\end{array}
\right]  , \left[
\begin{array}
[c]{cccc}%
0 & 0 & 0 & 0\\
0 & 0 & 0 & 0\\
0 & 0 & 0 & 0\\
1 & 1 & 1 & 0\\
1 & 1 & 1 & 1
\end{array}
\right]  $
\end{enumerate}

\medskip\noindent Iteration 3

\begin{enumerate}
\item $\left(  \mathcal{S}_{1}^{\prime},\mathcal{S}_{2}^{\prime}\right)  $ is flushed
\end{enumerate}
\end{enumerate}

\begin{description}
\item[\textsf{Out:}] $\left(  \mathcal{S}_{1}^{\prime},\mathcal{S}_{2}%
^{\prime}\right)  =\left[
\begin{array}
[c]{cccc}%
1 & 0 & 1 & 1\\
1 & 1 & 0 & 1\\
1 & 1 & 0 & 1\\
0 & 0 & 1 & 1\\
0 & 1 & 1 & 0
\end{array}
\right]  , \left[
\begin{array}
[c]{cccc}%
0 & 0 & 0 & 0\\
0 & 0 & 0 & 0\\
0 & 0 & 0 & 0\\
1 & 1 & 1 & 0\\
1 & 1 & 1 & 1
\end{array}
\right]  $, \newline\newline which is a sorted-flushed syl-representation of
the term $t$.
\end{description}
\end{example}

\begin{lemma}
\label{lem:Flush} The algorithm~\ref{alg:Flush} $(Flush)$ is correct.
\end{lemma}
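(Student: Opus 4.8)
The plan is to prove correctness of $Flush$ by means of a loop invariant for its repeat loop, a potential-function argument for termination, and a one-line reading of the output specification at the exit.

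The invariant I would carry is that, every time control reaches the test in Step~\ref{Flush:Begin}, the pair $(\mathcal{S}_1',\mathcal{S}_2')$ is a sorted syl-representation of $t=\alpha^{\mu}\beta^{\nu}$; it holds on first entry since $(\mathcal{S}_1',\mathcal{S}_2')=(\mathcal{S}_1,\mathcal{S}_2)$. Two bookkeeping observations handle the easy part of its preservation across one pass of the loop body. First, every swap made inside $\mathcal{S}_2'$ exchanges a $1$ and a $0$ in the same column, so $cs(\mathcal{S}_2')$ — and hence $\nu$, which is non-increasing under the standing assumption that $\mu$ and $\nu$ are non-increasing — is never altered; in particular the \textsf{Find} of Step~\ref{Flush:Findij} always succeeds, because if it failed then in every column of $\mathcal{S}_2'$ the ones would already occupy a bottom segment of rows, and such a matrix with non-increasing column sums is bottom-left flushed, so control would have returned at Step~\ref{Flush:Begin}. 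Second, the lone swap inside $\mathcal{S}_1'$ in Step~\ref{Flush:Findi} exchanges two entries of one row, so $rs(\mathcal{S}_1')=\mu$ is never altered. Thus, after a pass, only $PC(\mathcal{S}_1',\mathcal{S}_2')$ and the sortedness of $acs(\mathcal{S}_1')$ and $ars(\mathcal{S}_2')$ remain to be re-established — this is the heart of the proof, and the step I expect to be the main obstacle.

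For it, I would track the vectors $r=rs(\mathcal{S}_2')$, $c=cs(\mathcal{S}_1')$, $C=acs(\mathcal{S}_1')$, $R=ars(\mathcal{S}_2')$ computed at the start of the pass. The invariant forces $R$ strictly increasing, hence $r$ non-decreasing, so $\{k\le i: r_k=r_i\}$ and $\{k\ge i+1: r_k=r_{i+1}\}$ are contiguous blocks with top $i_\ell$ and bottom $i_u$; comparing row sums shows the \textsf{Find}s inside Steps~\ref{Flush:Findj1} and~\ref{Flush:Findj2} succeed, and that the combined effect of Steps~\ref{Flush:Findij}--\ref{Flush:Findj2} on $\mathcal{S}_2'$ is exactly to decrease $r_{i_\ell}$ by one and increase $r_{i_u}$ by one, i.e.\ to replace $R$ by the vector with $R_{i_\ell}\mapsto R_{i_\ell}-1$ and $R_{i_u}\mapsto R_{i_u}+1$. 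Since $i_\ell$ tops its block and $i_u$ bottoms its block, $R_{i_\ell-1}\le R_{i_\ell}-2$ and $R_{i_u+1}\ge R_{i_u}+2$ (whenever those indices exist); these inequalities keep $ars(\mathcal{S}_2')$ strictly increasing and show $R_{i_\ell}-1,\,R_{i_u}+1\notin R$, so by $PC$ at the start of the pass $R_{i_\ell}-1,\,R_{i_u}+1\in C$. Hence the columns $j_\ell<j_u$ demanded in Step~\ref{Flush:Findi}, with $C_{j_\ell}=R_{i_\ell}-1$ and $C_{j_u}=R_{i_u}+1$, exist; counting the $C$-entries and $R$-entries in the integer interval $[C_{j_\ell},C_{j_u}]$ gives $c_{j_u}-c_{j_\ell}=i_u-i_\ell+1\ge 2$, so a row $i$ with $\mathcal{S}_{1,i,j_\ell}^{\prime}=0$ and $\mathcal{S}_{1,i,j_u}^{\prime}=1$ exists, and its swap turns $C_{j_\ell}$ into $R_{i_\ell}$ and $C_{j_u}$ into $R_{i_u}$. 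Consequently $acs(\mathcal{S}_1')\cup ars(\mathcal{S}_2')$ is unchanged as a multiset — $R_{i_\ell}-1$ migrates from $C$ to $R$ and $R_{i_\ell}$ from $R$ to $C$, and symmetrically at $i_u$ — so it is still $\{1,\ldots,m+n\}$, restoring $PC$, while the same kind of local comparison keeps $acs(\mathcal{S}_1')$ strictly increasing. This closes the induction.

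Finally, for termination I would use the potential $\Phi(\mathcal{S}_2')=\sum_{i=1}^{m} i\cdot rs(\mathcal{S}_2')_i$, which is bounded above by $n\,m(m+1)/2$. Step~\ref{Flush:Findij} moves a $1$ from row $i$ to row $i+1$ and so raises $\Phi$ by $1$; Steps~\ref{Flush:Findj1} and~\ref{Flush:Findj2}, when executed, move a $1$ to a strictly lower row and so do not lower $\Phi$; Step~\ref{Flush:Findi} does not touch $\mathcal{S}_2'$. Hence every completed pass strictly increases $\Phi$, so the loop can run only finitely often and must exit at Step~\ref{Flush:Begin}; at exit $(\mathcal{S}_1',\mathcal{S}_2')$ is flushed and, by the invariant, a sorted syl-representation of $t$, i.e.\ a sorted-flushed syl-representation of $t$, exactly as $Flush$ promises.
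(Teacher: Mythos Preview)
Your proof is correct and follows essentially the same approach as the paper's: the same loop invariant (sorted syl-representation at each entry to Step~\ref{Flush:Begin}), the same verifications that each \textsf{Find} succeeds, the same $c_{j_u}-c_{j_\ell}=i_u-i_\ell+1\ge 2$ computation, and the same multiset-preservation argument for $PC$ and sortedness. Your termination argument via the potential $\Phi(\mathcal{S}_2')=\sum_i i\cdot rs(\mathcal{S}_2')_i$ is in fact more explicit than the paper's, which simply asserts that ``at least one out-of-order pair of $(1,0)$ in $\mathcal{S}_2$ is swapped'' without naming a strictly monotone measure.
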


\begin{proof}
Let $\left(  \mathcal{S}_{1},\mathcal{S}_{2}\right)  $ be an input, that is, a
sorted syl-representation of a term $t=\alpha^{\mu}\beta^{\nu}$. The
correctness of the algorithm is immediate from the following claims.

\begin{description}
[leftmargin=2.7em,style=nextline,itemsep=0.5em]

\item[\textsf{C1:}] \emph{Right before Step~\ref{Flush:Begin}, $\left(
\mathcal{S}_{1}^{\prime},\mathcal{S}_{2}^{\prime}\right)  $ is a sorted
syl-representation of the term $t$.}

We prove it by induction on the number of iterations. At the first iteration,
it is trivially true since $\left(  \mathcal{S}_{1}^{\prime},\mathcal{S}%
_{2}^{\prime}\right)  =\left(  \mathcal{S}_{1},\mathcal{S}_{2}\right)  .$ We
assume that it is true after some number of iterations. We need to show that
it is still true after one more iteration. It
is immediate from the following two sub-claims,
where $C^{\prime}=acs\left(  \mathcal{S}_{1}^{\prime
}\right)  $ and $R^{\prime}=ars\left(  \mathcal{S}_{2}^{\prime}\right)$
at the end of
Step~\ref{Flush:Findi}.  

\begin{itemize}
\item \emph{$C^{\prime}$ and $R^{\prime}$ are sorted in increasing order}.

Note that Steps~\ref{Flush:Findij}--\ref{Flush:Findj2} transform $\mathcal{S}_2'$  
by carrying out swaps along columns, as depicted by the following diagram.  

\begin{tikzpicture}[line cap=round,x=1.0cm,y=1.0cm]
\clip(-1.14,-0.162) rectangle (14.59,4.2);
\draw (0.,4.)-- (5.,4.);
\draw (5.,4.)-- (5.,0.);
\draw (0.,4.)-- (0.,0.);
\draw (0.,0.)-- (5.,0.);
\draw (8.,4.)-- (13.,4.);
\draw (8.,4.)-- (8.,0.);
\draw (8.,0.)-- (13.,0.);
\draw (13.,0.)-- (13.,4.);
\draw (5.5,2.3) node[anchor=north west] {$\Longrightarrow$};
\draw (-0.900,3.5) node[anchor=north west] {$i_\ell$};
\draw (7.065, 3.5) node[anchor=north west] {$i_\ell$};
\draw (-0.900,1) node[anchor=north west] {$i_u$};
\draw (7.065, 1) node[anchor=north west] {$i_u$};
\draw (-0.900,2.5) node[anchor=north west] {$i$};
\draw (7.065, 2.5) node[anchor=north west] {$i$};
\draw (-0.900,2.0) node[anchor=north west] {$i+1$};
\draw (7.065, 2.0) node[anchor=north west] {$i+1$};
\draw (1.1,   2.5) node[anchor=north west] {$1$};
\draw (9.065, 2.5) node[anchor=north west] {$0$};
\draw (1.1,   2.0) node[anchor=north west] {$0$};
\draw (9.065, 2.0) node[anchor=north west] {$1$};
\draw (3.1,   2.5) node[anchor=north west] {$0$};
\draw (11.065, 2.5) node[anchor=north west] {$1$};
\draw (4.1,   2.0) node[anchor=north west] {$1$};
\draw (12.065, 2.0) node[anchor=north west] {$0$};
\draw (3.1,   3.5) node[anchor=north west] {$1$};
\draw (11.065,3.5) node[anchor=north west] {$0$};
\draw (4.1,   1.0) node[anchor=north west] {$0$};
\draw (12.065,1.0) node[anchor=north west] {$1$};

\end{tikzpicture}

Thus\begin{align*}
R_{t}^{\prime}  &  = \left\{
\begin{array}
[c]{lll}%
R_{t}-1 & \text{if} & t=i_{\ell}\\
R_{t}+1 & \text{if} & t=i_{u}\\
R_{t} & \text{else} &
\end{array}
\right.
\end{align*}
From Step~\ref{Flush:Findi},   it is immediate that
\begin{align*}
C_{t}^{\prime}  &  = \left\{
\begin{array}
[c]{lll}%
C_{t}+1 & \text{if} & t=j_{\ell}\\
C_{t}-1 & \text{if} & t=j_{u}\\
C_{t} & \text{else} &
\end{array}
\right.
\end{align*}
Next recall that $R$ and $C$ are sorted in increasing order. Thus we only need
to show that $R_{i_{\ell}}^{\prime}>R_{i_{\ell}-1}$,\; $R_{i_{u}}^{\prime
}<R_{i_{u}+1}$,\; $C_{j_{\ell}}^{\prime}<C_{j_{\ell}+1}$\; and\; $C_{j_{u}%
}^{\prime}>C_{j_{u}-1}$. We show them one by one.

\begin{itemize}
\item $R_{i_{\ell}}^{\prime}>R_{i_{\ell}-1}$

Recall that $R_{i_{\ell}}^{\prime}=R_{i_{\ell}}-1.$ Since $r_{i_{\ell}%
}>r_{i_{\ell}-1}$ we have $R_{i_{\ell}}-1>R_{i_{\ell}-1}.\ $Thus $R_{i_{\ell}%
}^{\prime}>R_{i_{\ell}-1}.$

\item $R_{i_{u}}^{\prime}<R_{i_{u}+1}$

Recall that $R_{i_{u}}^{\prime}=R_{i_{u}}+1.$ Since $r_{i_{u}}<r_{i_{u}+1}$ we
have $R_{i_{u}}+1<R_{i_{u}+1}.\ $Thus $R_{iu}^{\prime}<R_{i_{u}+1}.$

\item $C_{j_{\ell}}^{\prime}<C_{j_{\ell}+1}$

Recall that $C_{j_{\ell}}^{\prime}=C_{j_{\ell}}+1$. Since $C_{j_{\ell}%
}=R_{i_{\ell}}-1$ we have $C_{j_{\ell}}^{\prime}=R_{i_{\ell}}$. Since
$R_{i_{\ell}}$ appears~in~$R$ and $PC\left(  C,R\right)  ,$ we see that
$R_{i_{\ell}}$ does not appear in $C.$  Hence $R_{i_{\ell}}<C_{j_{\ell}+1}.$
Thus $C_{j_{\ell}}^{\prime}<C_{j_{\ell}+1}.$

\item $C_{j_{u}}^{\prime}>C_{j_{u}-1}$

Recall that $C_{j_{u}}^{\prime}=C_{j_{u}}-1$. Since $C_{j_{u}}=R_{i_{u}}+1$ we
have $C_{j_{u}}^{\prime}=R_{i_{u}}$. Since $R_{i_{u}}$ appears~in~$R$ and
$PC\left(  C,R\right)  ,$ we see that $R_{i_{u}}$ does not appear in $C.$
Hence $R_{i_{u}}>C_{j_{u}-1}.$ Thus $C_{j_{u}}^{\prime}>C_{j_{u}-1}.$
\end{itemize}

\item \emph{$\left(  \mathcal{S}_{1}^{\prime},\mathcal{S}_{2}^{\prime}\right)
$ is a syl-representation of the term $t$ at the end of Step~\ref{Flush:Findi}%
}.

\begin{itemize}
\item $rs\left(  \mathcal{S}_{1}^{\prime}\right)  =\mu$.

Obvious since the loop body does not change $rs\left(  \mathcal{S}_{1}%
^{\prime}\right)  .$

\item $cs\left(  \mathcal{S}_{2}^{\prime}\right)  =\nu$.

Obvious since the loop body does not change $cs\left(  \mathcal{S}_{2}%
^{\prime}\right)  .$

\item $PC\left(  \mathcal{S}_{1}^{\prime},\mathcal{S}_{2}^{\prime}\right)  $ holds.

Note $R_{i_{\ell}}^{\prime}=C_{j_{\ell}}$, $R_{i_{u}}^{\prime}=C_{j_{u}}$ and
$C_{j_{\ell}}^{\prime}=R_{i_{\ell}}$, $C_{j_{u}}^{\prime}=R_{i_{u}}$. Note
that the others do not change.

In other words $C^{\prime}\cup R^{\prime}=C\cup R$ as sets. Thus $PC\left(
\mathcal{S}_{1}^{\prime},\mathcal{S}_{2}^{\prime}\right)  $ holds.
\end{itemize}
\end{itemize}

\item[\textsf{C2:}] \emph{The main loop (Repeat) terminates.}

For every iteration of the main loop, at least one \textquotedblleft out of
order\textquotedblright\ pair of $(1,0)$ in $\mathcal{S}_{2}$ is swapped. Thus
the algorithm terminates.

\item[\textsf{C3:}] \emph{In Step~\ref{Flush:Findij}, there exists }$\left(
\emph{i,j}\right)  $\emph{ satisfying the conditions stated in the step.}

Since we are at Step~\ref{Flush:Findij}, the `if' condition in
Step~\ref{Flush:Begin} is not satisfied. Hence $\left(  \mathcal{S}%
_{1}^{\prime},\mathcal{S}_{2}^{\prime}\right)  $ is not flushed. Thus
there exists $\left(  i,j\right)  \in\{1,\ldots,m-1\}\times\left\{
1,\ldots,n\right\}  \ $such that $\mathcal{S}_{2,i,j}^{\prime}=1\ $%
and\ $\mathcal{S}_{2,i+1,j}^{\prime}=0$.

\item[\textsf{C4:}] \emph{In Step~\ref{Flush:Findj1}, there exists j
satisfying the conditions stated in the step.}

Let $r^{\prime}=rs(\mathcal{S}_{2}^{\prime})$ right before entering the step. Note

\begin{itemize}
\item From Step~\ref{Flush:Findij}, we have $r_{i}^{\prime}=r_{i}-1$.

\item From Step~\ref{Flush:i1}, we have $r_{i_{\ell}}^{\prime}=r_{i_\ell}=r_{i}$.
\end{itemize}

Therefore there exist $j\in\{1,\ldots,n\}$ such that $\mathcal{S}%
_{2,i,j}^{\prime}=0$ and $\mathcal{S}_{2,i_{\ell},j}^{\prime}=1$.

\item[\textsf{C5:}] \emph{In Step~\ref{Flush:Findj2}, there exists j
satisfying the conditions stated in the step.}

Let $r^{\prime}=rs(\mathcal{S}_{2}^{\prime})$ right before entering the step. Note\ 

\begin{itemize}
\item From Step~\ref{Flush:Findij}, we have $r_{i+1}^{\prime}=r_{i+1}+1$.

\item From Step \ref{Flush:i2}, we have $r_{i_{u}}^{\prime}=r_{i_{u}}=r_{i+1}$.
\end{itemize}

Therefore there exist $j\in\{1,\ldots,n\}$ such that $\mathcal{S}%
_{2,i+1,j}^{\prime}=1$ and $\mathcal{S}_{2,i_{u},j}^{\prime}=0$.

\item[\textsf{C6:}] \emph{In Step~\ref{Flush:Findi}, there exist $i,j_{\ell
},j_{u}$ satisfying the conditions stated in the step.}

\begin{enumerate}
\item \label{lem:MakeNiceC41} From Step~\ref{Flush:i1}, we have

\begin{itemize}
\item if $i_{\ell}=1$ then $R_{i_{\ell}}\geq2$ ~~(since $\mathcal{S}%
_{2,i,j}^{\prime}=1$ and in turn $r_{i_{\ell}}=r_{i}\geq1$).

\item if $i_{\ell}>1$ then $R_{i_{\ell}}-R_{i_{\ell}-1}\geq2$ ~~(since
$r_{i_{\ell}}\geq r_{i_{\ell}-1}+1$).
\end{itemize}

Thus $R_{i_{\ell}}-1$ does not appear in $R$. Hence it must appear in $C$.
Thus there exists $j_{\ell}$ such that $C_{j_{\ell}}=R_{i_{\ell}}-1$.

\item \label{lem:MakeNiceC42} From Step~\ref{Flush:i2}, we have

\begin{itemize}
\item if $i_{u}=m$ then $R_{i_{u}}\leq m+n-1$ ~~(since $\mathcal{S}%
_{2,i+1,j}^{\prime}=0$ and in turn $r_{i_{u}}=r_{i+1}\leq n-1$).

\item if $i_{u}<m$ then $R_{i_{u}+1}-R_{i_{u}}\geq2$ ~~(since $r_{i_{u}}+1\leq
r_{i_{u}+1}$).
\end{itemize}

Thus $R_{i_{u}}+1$ does not appear in $R$. Hence it must appear in $C$. Thus
there exists $j_{u}$ such that $C_{j_{u}}=R_{i_{u}}+1$.
\end{enumerate}

Therefore there exist $j_{\ell},j_{u}\in\{1,\ldots,n\}$ such that $C_{j_{\ell
}}=R_{i_{\ell}}-1$ and $C_{j_{u}}=R_{i_{u}}+1$.

It remains to show that there exists $i$ that satisfies the conditions of
Step~\ref{Flush:Findi}.

Note that $R_{i_{\ell}},R_{i_{\ell}+1},\ldots,R_{i_{u}}$ appear in $R.$ Hence
they do not appear in $C.$ Note that%
\begin{align*}
R_{i_{\ell}}  &  =r_{i}+i_{\ell}\\
R_{i_{\ell}+1}  &  =r_{i}+i_{\ell}+1\\
&  \vdots\\
R_{i}  &  =r_{i}+i\\
R_{i+1}  &  =r_{i+1}+i+1\\
R_{i+2}  &  =r_{i+1}+i+2\\
&  \vdots\\
R_{i_{u}}  &  =r_{i+1}+i_{u}%
\end{align*}
Note that $R_{i_{\ell}},\ldots,R_{i}$ are consecutive integers. Likewise note
that $R_{i+1},\ldots,R_{i_{u}}$ are consecutive integers. We show that
$j_{u}-j_{\ell}=R_{i+1}-R_{i}$. Consider the following two cases:

\begin{description}
[leftmargin=4em,style=nextline,itemsep=0.5em]

\item[Case 1: $R_{i}+1=R_{i+1}$] Note that $R_{i_{\ell}},\ldots,R_{i_{u}}$ are
consecutive and they do not appear in $C.$ \ Since $C$ is sorted in increasing
order and $C_{j_{\ell}}=R_{i_{\ell}}-1\ $and $C_{j_{u}}=R_{i_{u}}+1$, there is
nothing in between $C_{_{j_{\ell}}}\ $and $C_{j_{u}}.$ Hence $j_{u}-j_{\ell
}=1.$ Note that $R_{i+1}-R_{i}=1.$ Thus $j_{u}-j_{\ell}=R_{i+1}-R_{i}.$

\item[Case 2: $R_{i}+1<R_{i+1}$] Note that the consecutive list of numbers
$R_{i}+1,\ldots,R_{i+1}-1$ do not appear in $R.$ Hence they appear in $C.$
Since $C$ is sorted in increasing order and $C_{j_{\ell}}=R_{i_{\ell}}-1\ $and
$C_{j_{u}}=R_{i_{u}}+1$, we conclude that exactly $R_{i}+1,\ldots,R_{i+1}-1$
appear in between $C_{j_{\ell}}$ and $C_{j_{u}}$. Hence
\[
j_{u}-j_{\ell}-1=\left(  R_{i+1}-1\right)  -\left(  R_{i}+1\right)
+1=R_{i+1}-R_{i}-1
\]
Thus $j_{u}-j_{\ell}=R_{i+1}-R_{i}.$
\end{description}

In both cases, we have shown that $j_{u}-j_{\ell}=R_{i+1}-R_{i}$. Note
\begin{align*}
c_{j_{u}}-c_{j_{\ell}}  &  =\left(  C_{j_{u}}-j_{u}\right)  -\left(
C_{j_{\ell}}-j_{\ell}\right) \\
&  =\left(  C_{j_{u}}-C_{j_{\ell}}\right)  -\left(  j_{u}-j_{\ell}\right) \\
&  =\left(  \left(  R_{i_{u}}+1\right)  -\left(  R_{i_{\ell}}-1\right)
\right)  -\left(  R_{i+1}-R_{i}\right) \\
&  =\left(  \left(  r_{i+1}+i_{u}+1\right)  -\left(  r_{i}+i_{\ell}-1\right)
\right)  -\left(  \left(  r_{i+1}+i+1\right)  -\left(  r_{i}+i\right)  \right)
\\
&  =i_{u}-i_{\ell}+1\\
&  \geq2
\end{align*}

Hence there exists $i\in\{1,\ldots,m\}$ such that $\mathcal{S}_{1,i,j_{\ell}%
}^{\prime}=0$ and $\mathcal{S}_{1,i,j_{u}}^{\prime}=1.$
\end{description}
\end{proof}

\begin{algorithm}
[$ResFromSyl$]\label{alg:ResFromSyl}\ 

\begin{description}
[leftmargin=3em,style=nextline,itemsep=0.0em]

\item[\textsf{In:}] $\left(  \mathcal{S}_{1},\mathcal{S}_{2}\right)  $, a
syl-representation of a term $t$

\item[\textsf{Out:}] $\mathcal{R}$, a res-representation of the term $t$
\end{description}

\begin{enumerate}
\item \label{Sort1} $\left(  \mathcal{S}_{1}^{\prime},\mathcal{S}_{2}^{\prime
}\right)  \leftarrow Sort\left(  \mathcal{S}_{1},\mathcal{S}_{2}\right)  $

\item \label{step4} $\left(  \mathcal{S}_{1}^{\prime},\mathcal{S}_{2}^{\prime
}\right)  \leftarrow Flush\left(  \mathcal{S}_{1}^{\prime},\mathcal{S}%
_{2}^{\prime}\right)  $

\item \label{res_rep} $\mathcal{R}\leftarrow\mathcal{S}^{\prime}_{1}$

\item Return $\mathcal{R}.$
\end{enumerate}
\end{algorithm}

\begin{example}
We trace the algorithm $ResFromSyl$ on the following input.

\begin{description}
[leftmargin=3em,style=nextline,itemsep=0.0em]

\item[\textsf{In:}] $\left(  \mathcal{S}_{1},\mathcal{S}_{2}\right)  =\left[
\begin{array}
[c]{cccc}%
0 & 1 & 1 & 1\\
0 & 1 & 1 & 1\\
1 & 1 & 1 & 0\\
0 & 0 & 1 & 1\\
0 & 1 & 1 & 0
\end{array}
\right]  , \left[
\begin{array}
[c]{cccc}%
0 & 0 & 0 & 0\\
0 & 1 & 0 & 0\\
1 & 0 & 1 & 0\\
0 & 0 & 0 & 0\\
1 & 1 & 1 & 1
\end{array}
\right]  $, \newline which is a syl-representation of the term $t=\alpha
_{1}^{3}\alpha_{2}^{3}\alpha_{3}^{3}\alpha_{4}^{2}\alpha_{5}^{2}\beta_{1}%
^{2}\beta_{2}^{2}\beta_{3}^{2}\beta_{4}^{1} $.
\end{description}

\begin{enumerate}
\item $\left(  \mathcal{S}_{1}^{\prime},\mathcal{S}_{2}^{\prime}\right)
=\left[
\begin{array}
[c]{cccc}%
0 & 1 & 1 & 1\\
0 & 1 & 1 & 1\\
1 & 1 & 0 & 1\\
0 & 0 & 1 & 1\\
0 & 1 & 1 & 0
\end{array}
\right]  , \left[
\begin{array}
[c]{cccc}%
0 & 0 & 0 & 0\\
0 & 1 & 0 & 0\\
0 & 0 & 1 & 0\\
1 & 0 & 0 & 0\\
1 & 1 & 1 & 1
\end{array}
\right]  $

\item $\left(  \mathcal{S}_{1}^{\prime},\mathcal{S}_{2}^{\prime}\right)
=\left[
\begin{array}
[c]{cccc}%
1 & 0 & 1 & 1\\
1 & 1 & 0 & 1\\
1 & 1 & 0 & 1\\
0 & 0 & 1 & 1\\
0 & 1 & 1 & 0
\end{array}
\right]  , \left[
\begin{array}
[c]{cccc}%
0 & 0 & 0 & 0\\
0 & 0 & 0 & 0\\
0 & 0 & 0 & 0\\
1 & 1 & 1 & 0\\
1 & 1 & 1 & 1
\end{array}
\right]  $

\item $\mathcal{R}=\left[
\begin{array}
[c]{cccc}%
1 & 0 & 1 & 1\\
1 & 1 & 0 & 1\\
1 & 1 & 0 & 1\\
0 & 0 & 1 & 1\\
0 & 1 & 1 & 0
\end{array}
\right]  $
\end{enumerate}

\begin{description}
\item[\textsf{Out:}] $\mathcal{R}=\left[
\begin{array}
[c]{cccc}%
1 & 0 & 1 & 1\\
1 & 1 & 0 & 1\\
1 & 1 & 0 & 1\\
0 & 0 & 1 & 1\\
0 & 1 & 1 & 0
\end{array}
\right]  $, \newline which is a res--representation of the term $t$.
\end{description}
\end{example}

\begin{lemma}
\label{ResFromSyl} The algorithm~\ref{alg:ResFromSyl} $(ResFromSyl)$ is
correct. Thus if a term has a syl-representation then it has a res-representation.
\end{lemma}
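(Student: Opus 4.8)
The plan is to show that $ResFromSyl$ (Algorithm~\ref{alg:ResFromSyl}) terminates and returns a res-representation of the input term $t=\alpha^\mu\beta^\nu$, by chaining together three facts already in hand: the correctness of $Sort$ (Lemma~\ref{lem:Sort}), the correctness of $Flush$ (Lemma~\ref{lem:Flush}), and the key correspondence between sorted-flushed syl-representations and res-representations (Lemma~\ref{lem:Flushed}). As throughout this section, we may assume $\mu$ and $\nu$ are non-increasing, since $\mathbf{R}$ and $\mathbf{S}$ are symmetric in the $\alpha_i$ and in the $\beta_j$.

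First I would follow the state of $(\mathcal S_1',\mathcal S_2')$ through the two algorithm calls. Let $(\mathcal S_1,\mathcal S_2)$ be the input syl-representation of $t$. By Lemma~\ref{lem:Sort}, $Sort$ terminates and, after Step~\ref{Sort1}, the pair $(\mathcal S_1',\mathcal S_2')$ is a \emph{sorted} syl-representation of $t$. Feeding it into $Flush$, Lemma~\ref{lem:Flush} says $Flush$ terminates and, after Step~\ref{step4}, $(\mathcal S_1',\mathcal S_2')$ is a \emph{sorted-flushed} syl-representation of $t$. Hence $ResFromSyl$ terminates, and what remains is to check that the returned matrix $\mathcal R=\mathcal S_1'$ is a res-representation of $t$.

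The remaining point is to bring $(\mathcal S_1',\mathcal S_2')$ into precisely the shape demanded by Lemma~\ref{lem:Flushed}, whose second alternative refers to the specific matrix $F_\nu$ rather than to an arbitrary flushed matrix. Since $(\mathcal S_1',\mathcal S_2')$ is a syl-representation of $t=\alpha^\mu\beta^\nu$, its column sums satisfy $cs(\mathcal S_2')=\nu$; since it is flushed, $\mathcal S_2'$ is bottom-left flushed. By Definition~\ref{def:blf}, the bottom-left flushed matrix with a given column sum is unique, so $\mathcal S_2'=F_\nu$. Therefore $(\mathcal S_1',F_\nu)$ is a sorted-flushed syl-representation of $t$, and the implication $(2)\Rightarrow(1)$ of Lemma~\ref{lem:Flushed} gives that $\mathcal S_1'$ is a res-representation of $t$. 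Consequently $\mathcal R=\mathcal S_1'$, the output of Step~\ref{res_rep}, is a res-representation of $t$, which proves the algorithm correct; the final assertion of the lemma follows immediately, since the algorithm accepts any syl-representation and outputs a res-representation of the same term.

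I do not anticipate a real obstacle: the substantive combinatorial work has already been carried out in Lemmas~\ref{lem:Sort} and~\ref{lem:Flush}, and the present argument is essentially bookkeeping. The only subtlety worth flagging is the uniqueness observation $\mathcal S_2'=F_\nu$, without which Lemma~\ref{lem:Flushed} could not be applied, since its hypothesis is phrased in terms of $F_\nu$ and not an arbitrary bottom-left flushed matrix.
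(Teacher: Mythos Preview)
Your proposal is correct and follows essentially the same three-step chain as the paper's proof (use Lemma~\ref{lem:Sort}, then Lemma~\ref{lem:Flush}, then Lemma~\ref{lem:Flushed}). Your explicit verification that $\mathcal{S}_2'=F_\nu$ via the uniqueness of the bottom-left flushed matrix with given column sums is a welcome clarification of a step the paper leaves implicit in its ``Immediate from \textsf{C2} and Lemma~\ref{lem:Flushed}.''
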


\begin{proof}
Let $\left(  \mathcal{S}_{1},\mathcal{S}_{2}\right)  $ be an input, that is, a
syl-representation of a term $t$. The correctness of the algorithm is
immediate from the following claims.

\begin{description}
[leftmargin=2.7em,style=nextline,itemsep=0.5em]

\item[\textsf{C1:}] \emph{After Step~\ref{Sort1}, $\left(  \mathcal{S}%
_{1}^{\prime},\mathcal{S}_{2}^{\prime}\right)  $ is a sorted syl
representation of the term $t$.}

Immediate from the specification of Algorithm~\ref{alg:Sort} ($Sort$) and
Lemma~\ref{lem:Sort}.

\item[\textsf{C2:}] \emph{After Step~\ref{step4}, $\left(  \mathcal{S}%
_{1}^{\prime},\mathcal{S}_{2}^{\prime}\right)  $ is a sorted-flushed
syl-representation of the term $t$. }

Immediate from \textsf{C1}, the specification of Algorithm~\ref{alg:Flush}
($Flush$) and Lemma~\ref{lem:Flush}.

\item[\textsf{C3:}] \emph{After Step~\ref{res_rep}, $\mathcal{R}$ is a res
representation of the term $t$}

Immediate from \textsf{C2} and Lemma~\ref{lem:Flushed}.
\end{description}
\end{proof}

%=============================================================================================
%back
%=============================================================================================
\bigskip

\bibliographystyle{plain}
%\bibliography{paper}
\def\cprime{$'$} \def\cprime{$'$} \def\cprime{$'$} \def\cprime{$'$}
  \def\cprime{$'$} \def\cprime{$'$} \def\cprime{$'$} \def\cprime{$'$}
  \def\cprime{$'$} \def\cprime{$'$} \def\cprime{$'$} \def\cprime{$'$}
  \def\cprime{$'$} \def\cprime{$'$} \def\cprime{$'$} \def\cprime{$'$}
  \def\cprime{$'$} \def\cprime{$'$} \def\cprime{$'$} \def\cprime{$'$}
  \def\cprime{$'$} \def\cprime{$'$} \def\cprime{$'$} \def\cprime{$'$}
  \def\cprime{$'$}

\end{document}